\newtheorem{theorem}{Theorem}[section]
\newtheorem{proposition}[theorem]{Proposition}
\newtheorem{lemma}[theorem]{Lemma}
\newtheorem{example}[theorem]{Example}
\newtheorem{remark}[theorem]{Remark}
\newcommand{\dom}{\mathbf{d}}
\newcommand{\ran}{\mathbf{r}}
\title[Chapter 1]{Primer on inverse semigroups I}
\author{Mark~V.~Lawson}
\address{Department of Mathematics and the
Maxwell Institute for Mathematical Sciences\\
Heriot-Watt University\\
Riccarton\\
Edinburgh~EH14~4AS\\
\texttt{M.V.Lawson@hw.ac.uk}} 
\begin{document}
\maketitle

\section{Introduction}\setcounter{theorem}{0}

Inverse semigroups were introduced in the 1950's by Ehresmann in France, 
Preston in the UK and Wagner in the former Soviet Union as algebraic analogues of pseudogroups of transformations.
We shall show in this chapter that inverse semigroups can in fact be seen as extensions of presheaves of groups by pseudogroups of transformations.

Inverse semigroups can be viewed as generalizations of groups.
Group theory is based on the notion of a symmetry; that is, a structure-preserving bijection.
Underlying group theory is therefore the notion of a bijection.
The set of all bijections from a set $X$ to itself forms a group, $S(X)$, under composition of functions called the symmetric group.
Cayley's theorem tells us that each abstract group is isomorphic to a subgroup of a symmetric group. 
Inverse semigroup theory, on the other hand, is based on the notion of a partial symmetry;
that is, a structure-preserving partial bijection.
Underlying inverse semigroup theory, therefore, is the notion of a partial bijection (or partial permutation).
The set of all partial bijections from $X$ to itself forms a semigroup, $I(X)$, 
under composition of partial functions called the symmetric inverse monoid.
The Wagner-Preston representation theorem tells us that each abstract inverse semigroup is isomorphic to an inverse subsemigroup of a symmetric inverse monoid.
However, symmetric inverse monoids and, by extension, inverse semigroups in general, are endowed with extra structure, as we shall see.\\

\noindent
{\bf Acknowledgement } These notes, and the ones that follow, were written to accompany a course of lectures I gave at the
workshop dedicated to Semigroups and Categories at the
University of Ottawa in 2010.\\

\section{Basic definitions}\setcounter{theorem}{0}

In this section, we shall introduce the rudiments of inverse semigroup theory motivated by the properties of the symmetric inverse monoids.
Such monoids have not only algebraic structure but also a partial order, a compatibility relation and an underlying groupoid structure
all of which can be defined on arbitrary inverse semigroups.

\subsection{The theorem of Wagner and Preston}

A semigroup $S$ is said to be {\em inverse} if for each $s \in S$ there exists a unique element
$s^{-1}$ such that  
$$s = ss^{-1}s \text{ and } s^{-1} = s^{-1}ss^{-1}.$$
Clearly all groups are inverse semigroups.

An {\em idempotent} in a semigroup is an element $e$ such that $e^{2} = e$.
Idempotents play an important role in inverse semigroup because the elements $s^{-1}s$ and $ss^{-1}$ are both idempotents.
The set of idempotents of $S$ is denoted by $E(S)$.
Two special idempotents are the {\em identity} element, if it exists, and the {\em zero} element, if it exists. 
An inverse semigroup with identity is called an {\em inverse monoid}
and an inverse semigroup with zero is called an {\em inverse semigroup with zero}.
An {\em inverse subsemigroup} of an inverse semigroup is a subsemigroup that is also closed under inverses.
If $S$ is an inverse subsemigroup of $T$ and $E(S) = E(T)$ we say that $S$ is a {\em wide} inverse subsemigroup of $T$.

The symmetric inverse monoid really is an inverse monoid in the terms of this definition.
The only idempotents in $I(X)$ are the identity functions on the subsets of $X$;
that is, partial functions of the form $1_{A}$ where $A \subseteq X$ and $1_{A}$ is the identity function on $A$.

\begin{remark}{\em  The distinction between semigroups and monoids is not a trivial one.
A comparison with $C^{\ast}$-algebras will make the point.
Commutative $C^{\ast}$-algebras correspond to locally compact spaces whereas the
commutative $C^{\ast}$-algebras with identity correspond to compact spaces.}
\end{remark}

A semigroup $S$ is said to be {\em regular} if for each $a \in S$ there exists an element $b$ such that $a = aba$ and $b = bab$.
The element $b$ is said to be {\em an inverse} of $a$.
Thus inverse semigroups are the regular semigroups in which each element has a unique inverse.
The following result is elementary but fundamental.
It was proved independently by Liber in the former Soviet Union,
and Douglas Munn and Roger Penrose over lunch  in St John's College, Cambridge as graduate students.

\begin{proposition} 
A regular semigroup is inverse if and only if its idempotents commute.
\end{proposition}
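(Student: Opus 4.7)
\emph{Plan.} I will prove the two implications separately. The sufficiency direction — commuting idempotents force uniqueness of inverses — reduces to an algebraic manipulation, while the necessity direction — uniqueness of inverses forces commutativity of idempotents — requires producing an alternative inverse and invoking uniqueness.

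For sufficiency, suppose $S$ is regular with commuting idempotents, and let $b, c$ both be inverses of $a$, so $aba = a = aca$, $bab = b$, and $cac = c$. A routine check shows that $ab$, $ac$, $ba$, $ca$ are all idempotents. The strategy is to reduce both $b$ and $c$ to the same expression. Starting from $b = bab$ and substituting $a = aca$ gives $b = b(aca)b$; reassociating as $b(ac)(ab)$, swapping the commuting idempotents $ac$ and $ab$, and then using $bab = b$ should yield $b = bac$. A parallel calculation starting from $c = cac$, substituting $a = aba$, and this time commuting $ca$ with $ba$, should give $c = bac$. Hence $b = c$.

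For necessity, let $e, f$ be idempotents in the inverse semigroup $S$. I first note that any idempotent is its own inverse, since $e = eee$ shows $e$ is an inverse of $e$ and uniqueness closes the argument. The crux is to show that $ef$ is itself idempotent. Set $y = (ef)^{-1}$. A direct computation — the repeated factors $ee$ and $ff$ collapse, and $y(ef)y$ simplifies to $y$ in the middle — should verify that $fye$ is also an inverse of $ef$, so uniqueness forces $y = fye$. Substituting this form into $y^{2}$ gives $y^{2} = (fye)(fye) = f\bigl(y(ef)y\bigr)e = fye = y$, so $y$ is idempotent. Being idempotent, $y$ is its own inverse; combined with the fact that $ef$ is an inverse of $y$ (by symmetry of the defining equations), uniqueness yields $ef = y$, so $ef$ is idempotent. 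By symmetry $fe$ is idempotent too, and commutativity then falls out of the chain $ef = y = fye = f(ef)e = fefe = (fe)(fe) = fe$, using idempotency of $fe$ at the final step.

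The main obstacle will be discovering the identity $y = fye$. The natural candidates $e$, $f$, or $fe$ are not inverses of $ef$ without already knowing commutativity, so one must instead build the alternative inverse out of the unknown $y$ itself, sandwiched between $f$ on the left and $e$ on the right; uniqueness then pins $y$ down tightly enough to force simultaneously the idempotency of $ef$ and the ultimate equality $ef = fe$.
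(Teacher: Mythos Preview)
Your proposal is correct and follows essentially the same route as the paper. Both directions match closely: for sufficiency you sandwich and commute the same idempotents (you reduce $b$ and $c$ to the common expression $bac$, whereas the paper chains $u$ directly into $v$, but this is cosmetic); for necessity the key device is identical --- showing that $fye$ is an inverse of $ef$, invoking uniqueness to get $y=fye$, deducing $y$ is idempotent, and then collapsing $ef=fe$. The only minor divergence is the endgame: the paper finishes by checking that $fe$ is an inverse of $ef$ (via $ef(fe)ef=ef$ and $fe(ef)fe=fe$) and invoking uniqueness once more, while you compute $ef=fye=f(ef)e=(fe)^2=fe$ directly; both are valid.
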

\begin{proof}
Let $S$ be a regular semigroup 
in which the idempotents commute and  
let $u$ and $v$ be inverses of $x$.  
Then
$$u = uxu = u(xvx)u = (ux)(vx)u,$$
where both $ux$ and $vx$ are idempotents.  
Thus, since idempotents commute, we have that
$$u = (vx)(ux)u = vxu = (vxv)xu = v(xv)(xu).$$
Again, $xv$ and $xu$ are idempotents and so
$$u = v(xu)(xv) = v(xux)v = vxv = v. $$ 
Hence $u = v$.

The converse is a little trickier.
Observe first that in a regular semigroup 
the product of two idempotents $e$ and $f$ has an idempotent inverse.
To see why, let $x = (ef)'$ be any inverse of $ef$.  
Then the element $fxe$ is an idempotent inverse of $ef$.  

Now let $S$ be a semigroup in which every element has a unique inverse.  
We shall show that $ef=fe$ for any idempotents $e$ and $f$.    
By the result above, 
$f(ef)'e$ is an idempotent inverse of $ef$.
Thus $(ef)' = f(ef)'e$ by uniqueness of inverses,
and so $(ef)'$ is an idempotent.
Every idempotent is self-inverse,
but on the other hand, the inverse of $(ef)'$ is $ef$.
Thus $ef = (ef)'$ by uniqueness of inverses.
Hence $ef$ is an idempotent.
We have shown that the set of idempotents  
is closed under multiplication.
It follows that $fe$ is also an idempotent. 
But $ef(fe)ef = (ef)(ef) = ef$, and $fe(ef)fe = fe$
since $ef$ and $fe$ are idempotents.
Thus $fe$ and $ef$ are inverses of $ef$.
Hence $ef = fe$.
\end{proof}

In the symmetric inverse monoid, the product of the idempotents $1_{A}$ and $1_{B}$ is just $1_{A \cap B}$
and so the commutativity of idempotent multiplication is just a reflection of the fact that the intersection of subsets  is commutative.

Inverses in inverse semigroups behave much like inverses in groups.

\begin{lemma} \mbox{}
\begin{enumerate}

\item $(s^{-1})^{-1} = s$.

\item $(st)^{-1} = t^{-1}s^{-1}$.

\item If $e$ is an idempotent then $ses^{-1}$ is an idempotent.

\end{enumerate}
\end{lemma}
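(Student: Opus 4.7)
The plan is to use the uniqueness of inverses, guaranteed by the definition of an inverse semigroup, as the main engine for all three parts, and to use the proposition above (idempotents commute) whenever products of idempotents need to be rearranged.

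For part (1), I would note that the defining equations $s = ss^{-1}s$ and $s^{-1} = s^{-1}ss^{-1}$ are perfectly symmetric in the roles of $s$ and $s^{-1}$: they exhibit $s$ as an inverse of $s^{-1}$ just as much as $s^{-1}$ as an inverse of $s$. Since inverses are unique, $(s^{-1})^{-1} = s$.

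For part (2), the strategy is to verify directly that $t^{-1}s^{-1}$ is an inverse of $st$ and then invoke uniqueness. I would compute
\[
(st)(t^{-1}s^{-1})(st) = s(tt^{-1})(s^{-1}s)t,
\]
and here is the one place the previous proposition enters: $tt^{-1}$ and $s^{-1}s$ are commuting idempotents, so this rearranges to $s(s^{-1}s)(tt^{-1})t = (ss^{-1}s)(tt^{-1}t) = st$. An entirely analogous computation, again swapping the two middle idempotents, shows $(t^{-1}s^{-1})(st)(t^{-1}s^{-1}) = t^{-1}s^{-1}$. Uniqueness of inverses then gives $(st)^{-1} = t^{-1}s^{-1}$.

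For part (3), I would simply square $ses^{-1}$:
\[
(ses^{-1})(ses^{-1}) = se(s^{-1}s)es^{-1}.
\]
Both $e$ and $s^{-1}s$ are idempotents and therefore commute, so this equals $s(s^{-1}s)e^2 s^{-1} = (ss^{-1}s)es^{-1} = ses^{-1}$, as required.

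No step is really an obstacle; the only care needed is bookkeeping in part (2), where one must recognize $tt^{-1}$ and $s^{-1}s$ as the idempotents to be swapped and apply the preceding proposition at exactly the right moment. Parts (1) and (3) are essentially immediate, the first from symmetry plus uniqueness, the third from a one-line calculation using commutativity of the two idempotents $e$ and $s^{-1}s$.
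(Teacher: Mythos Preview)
Your proof is correct in all three parts; each step is valid and the appeals to Proposition~2.2 (commutativity of idempotents) and to uniqueness of inverses are exactly what is needed. The paper itself states this lemma without proof, so there is nothing to compare against; your argument is the standard one and would serve perfectly well as the omitted proof.
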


We now characterize the two extreme types of inverse semigroup: those having exactly one idempotent
and those consisting of nothing but idempotents.

\begin{proposition}
All groups are inverse semigroups, and an inverse semigroup is a group if and only if it has a unique idempotent.
\end{proposition}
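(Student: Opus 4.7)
The plan is to handle the three implications separately, relying on the identities $s = ss^{-1}s$ and the fact that $ss^{-1}$ and $s^{-1}s$ are always idempotents.

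First, for the claim that every group is an inverse semigroup, I would take a group $G$ with identity $1$ and show that the group-theoretic inverse $g^{-1}$ satisfies the required equations: $g g^{-1} g = g \cdot 1 = g$ and $g^{-1} g g^{-1} = 1 \cdot g^{-1} = g^{-1}$. Uniqueness follows by a standard cancellation argument: if $h$ were another element satisfying $g = g h g$ and $h = h g h$, multiplying the first equation on the left by $g^{-1}$ (in the group sense) gives $1 = hg$, and similarly $gh = 1$, forcing $h = g^{-1}$.

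Next, for the forward direction of the second claim, I would suppose $S$ is an inverse semigroup which is a group and show it has exactly one idempotent. If $e \in E(S)$, then $e^{2} = e$, and multiplying by the group inverse of $e$ gives $e = 1$, the group identity. So the identity is the unique idempotent.

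The main content is the converse. Suppose $S$ is an inverse semigroup with a unique idempotent $e$. For every $s \in S$, both $s^{-1}s$ and $ss^{-1}$ are idempotents, hence each equals $e$. The key step is then to show $e$ acts as a two-sided identity: from $s = s s^{-1} s$ we get $s = e s$, and from $s = s s^{-1} s$ we also get $s = s e$. Thus $S$ is a monoid with identity $e$, and the equations $s s^{-1} = e = s^{-1} s$ promote $s^{-1}$ from a semigroup inverse to a genuine group inverse. I do not anticipate any real obstacle here; the only thing to be careful about is not implicitly assuming a prior identity element before deducing that $e$ is one.
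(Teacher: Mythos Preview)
Your proposal is correct and follows essentially the same route as the paper: both arguments observe that in an inverse semigroup with a single idempotent $e$ one has $s^{-1}s = e = ss^{-1}$, and then use $s = ss^{-1}s$ to conclude $es = s = se$, making $e$ a two-sided identity and $s^{-1}$ a group inverse. The paper is terser (it omits the easy direction and the verification that groups are inverse semigroups), but the substantive step is identical.
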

\begin{proof}
Clearly, groups are inverse semigroups.
Conversely, let $S$ be an inverse semigroup with exactly one
idempotent, $e$ say. 
Then $s^{-1}s = e = ss^{-1}$ for each $s \in S$. 
But $es = (ss^{-1})s = s = s(s^{-1}s) = se$, and so 
$e$ is the identity of $S$.
Hence $S$ is a group. 
\end{proof}

Groups are therefore degenerate inverse semigroups.

The following result leads to the set of idempotents of an inverse semigroup being referred to as its {\em semilattice of idempotents}.

\begin{proposition} \mbox{}
\begin{enumerate}

\item Let $S$ be an inverse semigroup.
Then $E(S)$ is a meet semilattice when we define $e \wedge f = ef$.

\item  All meet semilattices are inverse semigroups, and an inverse in which every element is an idempotent is a meet semilattice.

\end{enumerate}
\end{proposition}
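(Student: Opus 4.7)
The plan is to leverage the previous proposition (idempotents in an inverse semigroup commute) to make both parts nearly mechanical, with the only real work being to identify the correct partial order and verify the universal property of the meet.

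For (1), I would first invoke the preceding proposition to note that $E(S)$ is closed under the (commutative) multiplication of $S$, so $(E(S), \cdot)$ is a commutative band. I would then introduce the natural order $e \leq f \iff ef = e$ (equivalently $fe = e$, by commutativity). The three partial-order axioms fall out immediately: reflexivity from $e^2 = e$; antisymmetry from $e = ef = fe = f$ when $ef = e$ and $fe = f$; and transitivity from $eg = (ef)g = e(fg) = ef = e$. Next I would verify that $ef$ is the greatest lower bound of $e$ and $f$: the computations $(ef)e = e^2f = ef$ and $(ef)f = ef^2 = ef$ show $ef \leq e,f$, and if $g \leq e$ and $g \leq f$ then $g(ef) = (ge)f = gf = g$, so $g \leq ef$. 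Hence $e \wedge f = ef$.

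For (2), the forward direction is routine: a meet semilattice $(L, \wedge)$ has an associative, commutative, and idempotent operation, so each element satisfies $e \wedge e \wedge e = e$, making it its own inverse. To see that this inverse is unique (so $L$ really is an inverse semigroup, not merely a regular one), I would suppose $a$ is any inverse of $e$, i.e. $e = e\wedge a \wedge e$ and $a = a\wedge e\wedge a$; by commutativity and idempotence these collapse to $e = e \wedge a = a \wedge e = a$, so the inverse is unique. For the converse, if every element of an inverse semigroup $S$ is idempotent, then the previous proposition says multiplication in $S$ is commutative, so part (1) applies verbatim to $S = E(S)$ and exhibits $S$ itself as a meet semilattice.

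I do not anticipate a genuine obstacle here; the only conceptual step is spotting that the hypothesis of commuting idempotents is exactly what turns the multiplicative band structure into a meet-semilattice structure. A minor pitfall to watch for in (2) is to avoid circularity: I must verify uniqueness of inverses in a meet semilattice directly from the semilattice axioms rather than assuming it, which is why I spell out the collapse $e = e\wedge a = a$ above.
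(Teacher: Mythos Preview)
Your proposal is correct and follows essentially the same approach as the paper: define the order $e \leq f$ via $e = ef$, check it is a partial order, and verify that $ef$ is the meet. The paper's proof is simply terser --- it asserts the partial-order and greatest-lower-bound claims without writing out the verifications you spell out, and for (2) it observes that a meet semilattice is a commutative idempotent semigroup (hence regular with commuting idempotents, so inverse by Proposition~2.2) rather than checking uniqueness of inverses directly as you do; both routes are fine.
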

\begin{proof} 
(1) Define $e \leq f$ by $e = ef = fe$.
Then this is a partial order on $E(S)$,
and with respect to this order each pair of idempotents $e$ and $f$ has a greatest lower bound $ef$.

(2) Let $(P,\wedge)$ be a meet semilattice. 
Then $P$ is a commutative semigroup in which $e = e \wedge e$ for each element $e \in P$. 
Thus $(P,\wedge)$ is an inverse semigroup in which every element is idempotent.
\end{proof}

In the case of the symmetric inverse monoid $I(X)$, result (1) above is just the fact that the semilattice of
idempotents of $I(X)$ is isomorphic to the boolean algebra of all subsets of $X$.

The following property is often used to show that definitions involving idempotents
are self-dual with respect to left and right.
It is part of the folklore of the subject but it played an interesting,
and rather unexpected role, in Girard's work on linear logic.

\begin{lemma} Let $S$ be an inverse semigroup.
\begin{enumerate}

\item For each idempotent $e$ and element $s$ there is an idempotent $f$ such that
$es = sf$. 

\item For each idempotent $e$ and element $s$ there is an idempotent $f$ such that
$se = fs$. 

\end{enumerate}
\end{lemma}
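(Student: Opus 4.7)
The plan is to produce the required idempotents explicitly. For part (1), I would propose $f = s^{-1}es$; for part (2), I would propose $f' = ses^{-1}$. These candidates are the obvious ones suggested by ``conjugating'' the idempotent $e$ by $s$ or $s^{-1}$, and each is verifiable using only two facts already established in the excerpt: the previous lemma (which states that $tet^{-1}$ is an idempotent whenever $e$ is), and the proposition that idempotents in an inverse semigroup commute.

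For part (1), I would first check that $f = s^{-1}es$ is in fact an idempotent. This follows by applying part (3) of the previous lemma with $s$ replaced by $s^{-1}$ and using $(s^{-1})^{-1} = s$ from part (1) of that lemma. Next I would verify $es = sf$ by the short calculation
\[
sf = s(s^{-1}es) = (ss^{-1})e\, s = e(ss^{-1})s = e(ss^{-1}s) = es,
\]
where the middle equality uses commutativity of the idempotents $ss^{-1}$ and $e$, and the last equality uses $ss^{-1}s = s$.

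Part (2) is completely analogous: I would take $f' = ses^{-1}$, which is an idempotent directly by part (3) of the previous lemma, and then compute
\[
f's = (ses^{-1})s = se(s^{-1}s) = s(s^{-1}s)e = (ss^{-1}s)e = se,
\]
again using commutativity of the idempotents $e$ and $s^{-1}s$.

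There is no real obstacle here; the only substantive point is recognizing that the natural candidate $s^{-1}es$ (respectively $ses^{-1}$) is the right choice, and that its idempotency and the desired equality both reduce to commutativity of idempotents together with the defining identity $ss^{-1}s = s$. The two parts are formally dual, so once (1) is written out, (2) requires essentially no new ideas.
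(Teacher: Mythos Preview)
Your proposal is correct and matches the paper's proof essentially verbatim: the paper also sets $f = s^{-1}es$, notes it is an idempotent, and computes $sf = s(s^{-1}es) = (ss^{-1})es = e(ss^{-1})s = es$ using commutativity of idempotents, then remarks that (2) is similar.
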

\begin{proof} We prove (1) only since the proof of (2) is similar.
Put $f = s^{-1}es$ an idempotent.
Then $sf = s(s^{-1}es) = (ss^{-1})es = e(ss^{-1})s = es$,
using the fact that idempotents commute.
\end{proof}

{\em Homomorphisms of inverse semigroups} are just semigroup homomorphisms.
The convention we shall follow is that if $S$ and $T$ are both monoids
or both inverse semigroups with zero then their homomorphisms
will be required to be monoid homomorphisms or map zeros to zeros, respectively.
{\em Isomorphisms of inverse semigroups} are just semigroup isomorphisms.

\begin{lemma} Let $\theta \colon S \rightarrow  T$ be a homomorphism  between inverse semigroups.  
\begin{enumerate}

\item $\theta (s^{-1}) = \theta (s)^{-1}$ for all $s \in S$. 

\item If $e$ is an idempotent then $\theta (e)$ is an idempotent. 

\item If $\theta (s)$ is an idempotent then there is 
an idempotent $e$ in $S$
such that $\theta (s) = \theta (e)$. 

\item $\mbox{\rm Im}\,\theta$ is an inverse subsemigroup of $T$. 

\item If $U$ is an inverse subsemigroup of $T$
then $\theta^{-1}(U)$ is an inverse subsemigroup of $S$.

\end{enumerate}
\end{lemma}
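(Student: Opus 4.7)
The plan is to prove the five parts in order, using part (1) as the key ingredient for several later parts. The main observation is that uniqueness of inverses in an inverse semigroup, already established earlier in the excerpt, does all the heavy lifting.

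For part (1), I would apply $\theta$ to the two defining equations $s = ss^{-1}s$ and $s^{-1} = s^{-1}ss^{-1}$ to conclude that $\theta(s^{-1})$ satisfies the inverse axioms relative to $\theta(s)$ in $T$; then uniqueness of inverses in $T$ forces $\theta(s^{-1}) = \theta(s)^{-1}$. Part (2) is an immediate one-line calculation from $e = e^2$.

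Part (3) is where the small subtlety lies, and it is the step I expect to be the main obstacle since it is not purely formal. Given that $\theta(s)$ is an idempotent, I want to find an idempotent $e \in S$ with the same image. The natural candidate is $e = s^{-1}s$ (or dually $ss^{-1}$), which is indeed idempotent. To see $\theta(e) = \theta(s)$, note first that any idempotent is its own inverse, so in $T$ we have $\theta(s)^{-1} = \theta(s)$ by uniqueness of inverses; combining this with part (1) gives $\theta(e) = \theta(s^{-1})\theta(s) = \theta(s)^{-1}\theta(s) = \theta(s)\theta(s) = \theta(s)$, as required.

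Parts (4) and (5) are then routine. For (4), $\mathrm{Im}\,\theta$ is clearly closed under multiplication because $\theta$ is a homomorphism, and closure under inverses is exactly part (1). For (5), if $s, t \in \theta^{-1}(U)$ then $\theta(st) = \theta(s)\theta(t) \in U$ since $U$ is a subsemigroup, and $\theta(s^{-1}) = \theta(s)^{-1} \in U$ by (1) and closure of $U$ under inverses; hence $\theta^{-1}(U)$ is closed under both the semigroup operation and the involution, so it is an inverse subsemigroup of $S$.
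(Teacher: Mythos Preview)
Your proof is correct and follows essentially the same approach as the paper: both use uniqueness of inverses for (1), the trivial computation for (2), the idempotent $e = s^{-1}s$ together with the fact that an idempotent is self-inverse for (3), and then (4) and (5) follow routinely from (1). The only cosmetic difference is that the paper leaves (5) as ``straightforward'' while you spell out the details.
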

\begin{proof}
(1) Clearly, 
$\theta (s) \theta (s^{-1}) \theta (s) = \theta (s)$ and 
$\theta (s^{-1}) \theta (s) \theta (s^{-1}) = \theta (s^{-1})$.   
Thus by uniqueness of inverses we have that 
$\theta (s^{-1}) = \theta (s)^{-1}$.

(2) $\theta (e)^{2} = \theta (e) \theta (e) = \theta (e)$. 

(3) If $\theta (s)^{2} = \theta (s)$, then 
$\theta (s^{-1}s) = \theta (s^{-1}) \theta (s) 
= \theta (s)^{-1} \theta (s) = \theta (s)^{2} = \theta (s)$.

(4) Since $\theta$ is a semigroup homomorphism $\mbox{im}\,\theta$ 
is a subsemigroup of $T$.
By (1), $\mbox{im}\,\theta$ is closed under inverses.

(5) Straightforward.

\end{proof}

If $\theta \colon S \rightarrow T$ is a homomorphism between inverse semigroups
then it induces a homomorphism between the semilattices $E(S)$ and $E(T)$.
If this restricted homomorphism is injective we say that the homomorphism is {\em idempotent-separating}.

The following result confirms that inverse semigroups are the right abstract counterparts of the symmetric inverse monoids.

\begin{theorem}[Wagner-Preston representation theorem]
Every inverse semigroup can be embedded in a symmetric inverse monoid.
\end{theorem}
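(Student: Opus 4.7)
The plan is to mimic Cayley's theorem, but with partial bijections in place of bijections. For each $a \in S$ I construct a partial bijection $\theta_{a}$ of the underlying set $S$ and then show that $a \mapsto \theta_{a}$ gives an injective homomorphism $S \to I(S)$.

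Writing $eS = \{y \in S : y = ey\}$ for the principal set determined by an idempotent $e$, I define
$$\theta_{a} \colon a^{-1}aS \longrightarrow aa^{-1}S, \qquad \theta_{a}(x) = ax.$$
The formula lands in $aa^{-1}S$ because $x = a^{-1}ax$ forces $ax = aa^{-1}(ax)$, and the map is a bijection with two-sided inverse $\theta_{a^{-1}}$, as a short check using $a = aa^{-1}a$ and $a^{-1} = a^{-1}aa^{-1}$ shows. Hence $\theta_{a} \in I(S)$.

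Next I verify that $\theta \colon a \mapsto \theta_{a}$ is a homomorphism $S \to I(S)$. On underlying elements this is immediate: $\theta_{a}(\theta_{b}(x)) = abx = \theta_{ab}(x)$. The real content is matching domains, i.e.\ showing that the set of $x$ for which $\theta_{a}\theta_{b}$ is defined coincides with the domain $(ab)^{-1}(ab)S = b^{-1}a^{-1}abS$ of $\theta_{ab}$. One inclusion is formal: if $x = b^{-1}bx$ and $bx = a^{-1}abx$, then premultiplying the second equation by $b^{-1}$ gives $x = b^{-1}bx = b^{-1}a^{-1}abx$. The reverse inclusion is where commutativity of idempotents --- the proposition proved earlier --- really bites: from $x = b^{-1}a^{-1}abx$ one exhibits $b^{-1}a^{-1}ab$ as an idempotent lying below $b^{-1}b$ in $E(S)$, which immediately forces $x = b^{-1}bx$; a parallel manipulation, again exploiting that idempotents commute past one another, yields $bx = a^{-1}abx$. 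This domain-matching step is the main obstacle in the whole argument.

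Injectivity is then easy. If $\theta_{a} = \theta_{b}$ their domains agree, so $a^{-1}aS = b^{-1}bS$, which forces $a^{-1}a = b^{-1}b$ (each such principal set is generated by a unique idempotent). Evaluating both maps at this common idempotent gives $\theta_{a}(a^{-1}a) = a\cdot a^{-1}a = a$ and similarly $\theta_{b}(b^{-1}b) = b$, whence $a = b$, and $\theta$ is the required embedding.
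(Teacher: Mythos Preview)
Your proof is correct and follows essentially the same approach as the paper's: the same representation $\theta_{a}\colon a^{-1}aS\to aa^{-1}S$, $x\mapsto ax$, with the homomorphism property established by matching domains and injectivity read off by evaluating at the domain idempotent. The only cosmetic differences are that the paper packages the domain computation via the identity $eS\cap fS=efS$ and a short chain of set inclusions rather than your element-wise verification, and for injectivity it derives $a=ba^{-1}a$, $b=ab^{-1}b$ directly instead of first noting $a^{-1}a=b^{-1}b$.
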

\begin{proof} Given an inverse semigroup $S$ we shall construct an injective homomorphism $\theta \colon S \rightarrow I(S)$.
For each element $a \in  S$, 
define $\theta_{a} \colon a^{-1}aS \rightarrow  aa^{-1}S$ 
by $\theta _{a}(x) = ax$.  
This is well-defined because $aS = aa^{-1}S$ as the following set inclusions show
$$aS = aa^{-1}aS \subseteq aa^{-1}S \subseteq aS.$$  
Also
$\theta_{a^{-1}} \colon aa^{-1}S \rightarrow a^{-1}aS$
and 
$\theta_{a^{-1}} \theta_{a}$ is the identity on $a^{-1}aS$
and $\theta_{a} \theta_{a^{-1}}$ is the identity on $aa^{-1}S$.
Thus $\theta _{a}$ is a bijection and   
$\theta_{a}^{-1} = \theta_{a^{-1}}$.
Define $\theta \colon S \rightarrow I(S)$ 
by $\theta (a) = \theta_{a}$.
This is well-defined by the above.
Next we show that $\theta_{a} \theta_{b} = \theta_{ab}$.
If $e$ and $f$ are any idempotents then
$$eS \cap fS = efS.$$
Thus
$$\mbox{dom}\,\theta_{a} \cap \mbox{im}\,\theta_{b} 
= a^{-1}aS \cap bb^{-1}S = a^{-1}abb^{-1}S.$$
Hence 
$$\mbox{dom}(\theta_{a}  \theta_{b}) 
= \theta_{b}^{-1}(a^{-1}abb^{-1}S) = b^{-1}a^{-1}aS
= b^{-1}a^{-1}abS$$
where we use the following subset inclusions
$$b^{-1}a^{-1}aS = b^{-1}bb^{-1}a^{-1}aS = b^{-1}a^{-1}abb^{-1}S \subseteq b^{-1}a^{-1}abS \subseteq b^{-1}a^{-1}aS.$$
Thus $\mbox{dom}(\theta_{a} \theta_{b}) = \mbox{dom}(\theta_{ab})$.
It is immediate from the definitions that 
$\theta_{a} \theta_{b}$ and $\theta_{ab}$ 
have the same effect on elements,
and so $\theta$ is a homomorphism.
It remains to prove that $\theta$ is injective.
Suppose that $\theta_{a} = \theta_{b}$.
Then $a = ba^{-1}a$ and $b = ab^{-1}b$ from which $a = b$ readily follows.
\end{proof}

\begin{example}
{\em 
Let $X$ be a topological space.
Consider the collection $\Gamma (X)$ of all homeomorphisms between the open subsets of $X$.
This is not merely a subset of $I(X)$ but also an inverse subsemigroup.
It is known as a {\em pseudogroup of transformations}.
Admittedly, in many applications the word `pseudogroup' often implies extra properties that will not concern us here.
Pseudogroups of smooth maps between the open subsets of $\mathbb{R}^{n}$ are used to define differential manifolds.
This and similar applications led Ehresmann and Wagner to develop a general theory of pseudogroups with a view to using them in the foundations of differential geometry.
}
\end{example}

\subsection{The natural partial order}

In the previous section, we dealt with the algebraic structures on the symmetric inverse monoid: the product and the inverse.
But the symmetric inverse monoid $I(X)$ has other structures in addition to its algebraic ones,
and these will leave a trace in arbitrary inverse semigroups via the Wagner-Preston representation theorem.

There is a partial ordering on partial bijections called the restriction ordering.
Perhaps surprisingly, this order can be characterized algebraically:
namely, $f \subseteq g$ if and only if $f = gf^{-1}f$.
This motivates our next definition.

On an inverse semigroup, define $s \leq t$ iff $s = ts^{-1}s$. 

\begin{lemma} The following are equivalent.
\begin{enumerate}

\item $s \leq t$.

\item $s = te$ for some idempotent $e$.

\item $s = ft$ for some idempotent $f$.

\item $s = ss^{-1}t$.

\end{enumerate}
\end{lemma}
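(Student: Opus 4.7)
My plan is to prove the equivalence by the cycle $(1) \Rightarrow (2) \Rightarrow (3) \Rightarrow (4) \Rightarrow (1)$. Throughout, the two main tools are that $s^{-1}s$ and $ss^{-1}$ are idempotents (so they commute with every other idempotent) and the preceding lemma, which lets me slide an idempotent across an element at the cost of replacing it with a (possibly) different idempotent.

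The implication $(1) \Rightarrow (2)$ is immediate: if $s = ts^{-1}s$ then take $e = s^{-1}s$, which is an idempotent by the basic theory. For $(2) \Rightarrow (3)$, I would apply part (2) of the earlier lemma to the element $t$ and the idempotent $e$: this produces an idempotent $f$ with $te = ft$, so $s = te = ft$ as required. (The reverse direction, if one wanted $(3) \Rightarrow (2)$ directly, would use part (1) of that lemma in the same way.)

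For $(3) \Rightarrow (4)$, assuming $s = ft$ I would compute $ss^{-1} = ft \cdot t^{-1}f$ (using $f^{-1} = f$ and $(st)^{-1} = t^{-1}s^{-1}$ from the lemma on inverses), then collapse this using that $f$ and $tt^{-1}$ are commuting idempotents and $f^2 = f$; this yields $ss^{-1}t = ft = s$. For $(4) \Rightarrow (1)$, from $s = ss^{-1}t$ I take inverses to get $s^{-1} = t^{-1}ss^{-1}$, so $s^{-1}s = t^{-1}s$, and then $ts^{-1}s = tt^{-1}s = tt^{-1}ss^{-1}t = ss^{-1}tt^{-1}t = ss^{-1}t = s$, again using that the two idempotents $tt^{-1}$ and $ss^{-1}$ commute.

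No step is really a serious obstacle; the only delicacy is the bookkeeping in $(3) \Rightarrow (4)$ and $(4) \Rightarrow (1)$, where one has to be disciplined about which factors are idempotents before invoking commutativity. The symmetry between conditions $(2)/(4)$ (idempotent on the right) and $(3)$ (idempotent on the left) is exactly what the preceding left-right lemma was introduced to handle, so invoking it in $(2) \Rightarrow (3)$ is the conceptual crux of the argument; everything else is forced.
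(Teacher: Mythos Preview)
Your proof is correct and follows the same cycle $(1)\Rightarrow(2)\Rightarrow(3)\Rightarrow(4)\Rightarrow(1)$ as the paper, invoking Lemma~2.6 at exactly the same point. The computations you give for $(3)\Rightarrow(4)$ and $(4)\Rightarrow(1)$ differ cosmetically from the paper's (you compute $ss^{-1}$ directly and take inverses, whereas the paper manipulates auxiliary idempotents), but the underlying argument is the same.
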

\begin{proof} 

(1)$\Rightarrow$(2). This is immediate.

(2)$\Rightarrow$(3). This is immediate by Lemma~2.6.

(3)$\Rightarrow$(4). Suppose that $s = ft$.
Then $fs = s$ and so $fss^{-1} = ss^{-1}$.
It follows that $s = ss^{-1}t$.

(4)$\Rightarrow$(1). Suppose that $s = ss^{-1}t$.
Then $s = t(t^{-1}ss^{-1}t)$.
Put $i = t^{-1}ss^{-1}t$.
Then $si = s$ and so $s^{-1}si = s^{-1}s$.
It follows that $s = ts^{-1}s$ giving $s \leq t$.
\end{proof}

We may now establish the main properties of the relation $\leq$.
They are all straightforward to prove in the light of the above lemma.

\begin{proposition} \mbox{}
\begin{enumerate}

\item The relation $\leq$ is a partial order.

\item If $s \leq t$ then $s^{-1} \leq t^{-1}$.

\item If $s_{1} \leq t_{1}$ and $s_{2} \leq t_{2}$ then $s_{1}s_{2} \leq t_{1}t_{2}$.

\item If $e$ and $f$ are idempotents then $e \leq f$ if and only if $e = ef = fe$.

\item $s \leq e$ where $e$ is an idempotent implies that $e$ is an idempotent.

\end{enumerate}
\end{proposition}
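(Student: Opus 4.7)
The plan is to push every clause through the four equivalent descriptions of $s \leq t$ collected in Lemma~2.9, combined with Lemma~2.6 (which slides an idempotent across an element) and the fact that idempotents commute. With those tools each item reduces to choosing the convenient form of the ordering and then tidying up products of idempotents.

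For (1), reflexivity is just $s = ss^{-1}s$, and transitivity combines $s = te$ with $t = ug$ (form~(2)) into $s = u(ge)$, which gives $s \leq u$ because $ge$ is idempotent. Antisymmetry is the one step that deserves care: from $s = te$ and $t = sf$ I would compute $t^{-1}t = f\cdot s^{-1}s \cdot f = s^{-1}s \cdot f$ and, symmetrically, $s^{-1}s = t^{-1}t \cdot e$, exploiting that $e$, $f$ and the idempotents $s^{-1}s$, $t^{-1}t$ all commute pairwise. Multiplying these two relations in either order yields $s^{-1}s = t^{-1}t$, and then form~(1) gives $s = ts^{-1}s = tt^{-1}t = t$.

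For (2), apply the inversion identities of Lemma~2.4 to $s = ts^{-1}s$ to get $s^{-1} = s^{-1}s\,t^{-1}$, then use Lemma~2.6 to rewrite $s^{-1}s\cdot t^{-1}$ as $t^{-1}\cdot g$ for an idempotent $g$; form~(2) then yields $s^{-1} \leq t^{-1}$. For (3), write $s_i = t_i e_i$, slide $e_1$ past $t_2$ via Lemma~2.6 to some $e_1'$, and note that $e_1' e_2$ is idempotent by commutativity, so $s_1 s_2 = t_1 t_2 (e_1' e_2) \leq t_1 t_2$. Part~(4) is immediate once one remembers that idempotents are self-inverse: $e \leq f$ expands directly to $e = fe$, and commutativity of idempotents then forces $ef = fe = e$; conversely $e = fe$ is just $e = f\cdot e^{-1}e$. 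For (5), which I read as stating that $s$ is idempotent, $s \leq e$ gives $s = eg$ for some idempotent $g$, and $s = eg$ is itself idempotent because $e$ and $g$ commute.

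The main — and really only — obstacle is the antisymmetry step in (1), because it is the one place that needs a genuine computation rather than just a syntactic translation between the forms of $\leq$. Everything else is routine bookkeeping with Lemma~2.6 and idempotent commutativity.
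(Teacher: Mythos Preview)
Your proposal is correct and is precisely what the paper intends: the paper does not give a proof at all, stating only that the claims ``are all straightforward to prove in the light of the above lemma,'' and your argument fills in exactly those details using the equivalent forms of $\leq$ together with Lemma~2.6 and commutativity of idempotents. A couple of minor remarks: your internal numbering is off by one (the equivalent forms are in Lemma~2.10 and the inversion identities in Lemma~2.3), and in part~(2) you can shortcut the appeal to Lemma~2.6 since $s^{-1} = (s^{-1}s)t^{-1}$ is already in form~(3) of the lemma.
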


\begin{remark}{\em 
Property (1) above leads us to dub $\leq$ the {\em natural partial order} on $S$.
Property (2) needs to be highlighted since readers familiar with lattice-ordered groups
might have been expecting something different.
Property (3) tells us that the natural partial order is {\em compatible with the multiplication}.
Property (4) tells us that when the natural partial order is restricted to the semilattice of idempotents
we get back the usual ordering on the idempotents.
Because the natural partial order is defined algebraically it is preserved by homomorphisms.
}
\end{remark}

Our next result tells us that the partial order encodes how far from being a group an inverse semigroup is.

\begin{proposition} 
An inverse semigroup is a group if and only if the natural partial order is the equality relation.
\end{proposition}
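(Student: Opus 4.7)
My plan is to reduce this to Proposition~2.3, which characterizes groups among inverse semigroups as those with a unique idempotent. So the task becomes: show that the natural partial order is trivial if and only if $E(S)$ is a singleton.

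For the forward direction, suppose $S$ is a group. Then $S$ has a single idempotent, namely the identity $1$, and $s^{-1}s = 1$ for every $s$. If $s \leq t$, then by definition $s = ts^{-1}s = t \cdot 1 = t$, so $\leq$ is equality. This step is immediate.

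For the converse, assume $\leq$ is equality; I want to deduce that $E(S)$ has only one element, and then quote Proposition~2.3. Given two idempotents $e, f \in E(S)$, their product $ef$ is again an idempotent (idempotents commute, so $(ef)(ef) = e^2 f^2 = ef$). Now by Lemma~2.9(2), $ef \leq e$ (take the idempotent witness to be $f$), and by Lemma~2.9(3), $ef \leq f$ (take the idempotent witness to be $e$). Since $\leq$ is equality, both give $ef = e$ and $ef = f$, whence $e = f$. So $E(S)$ is a singleton and $S$ is a group by Proposition~2.3.

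There is no real obstacle here: both directions use only the characterizations of $\leq$ from Lemma~2.9 and the fact (already established) that idempotents commute. The only thing to be careful about is invoking the correct clause of Lemma~2.9 in each comparison, since the definition $s \leq t \iff s = ts^{-1}s$ is phrased one-sidedly while the two comparisons $ef \leq e$ and $ef \leq f$ are on different sides.
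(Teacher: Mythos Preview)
Your proof is correct and follows essentially the same route as the paper: show that $ef \leq e$ and $ef \leq f$ force all idempotents to coincide, then invoke the characterization of groups as inverse semigroups with a unique idempotent. The only discrepancy is in numbering---what you cite as Proposition~2.3 and Lemma~2.9 are Proposition~2.4 and Lemma~2.10 in the paper.
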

\begin{proof} Let $S$ be an inverse semigroup in which the natural partial order is equality.
If $e$ and $f$ are any two idempotents then $ef \leq e,f$ and so $e = f$.
It follows that there is exactly one idempotent and so $S$ is a group by Proposition~2.4.
The converse is immediate.
\end{proof}

In any poset $(X,\leq)$, a subset $Y \subseteq X$ is said to be an {\em order ideal}
if $x \leq y \in Y$ implies that $x \in Y$.
More generally, if $Y$ is any subset of $X$ then define
$$Y^{\downarrow} = \{x \in X \colon x \leq y \text{ for some } y \in Y \}.$$
This is the {\em order ideal generated by $Y$}.
If $y \in X$ then we denote $\{y \}^{\downarrow}$ by $y^{\downarrow}$
and call it the {\em principal order ideal generated by $y$}.

Property (5) of Proposition~2.11 tells us that the semilattice of idempotents is an order ideal in $S$
with respect to the natural partial order.

Looking below an idempotent we see only idempotents, what happens if we look up?
The answer is that we don't necessarily see only idempotents.
The symmetric inverse monoid is an example.

Let $(X,\leq)$ be a poset.
If $Y$ is any subset of $X$ then define 
$$Y^{\uparrow} = \{x \in X \colon x \geq y \text{ for some } y \in Y \}.$$
If $Y = \{ y\}$ we denote $\{ y\}^{\uparrow}$ by $y^{\uparrow}$.

An inverse semigroup $S$ is said to be {\em $E$-unitary} if $e \leq s$ where $e$ is an idempotent implies that $s$ is an idempotent.
An inverse semigroup with zero $S$ is said to be $E^{\ast}$-unitary if 
$0 \neq e \leq s$ where $e$ is an idempotent implies that $s$ is an idempotent.

\begin{remark}{\em 
The reason for having two definitions, depending on whether the inverse semigroup does not or does have a zero,
is because an $E$-unitary inverse semigroup with zero has to be a semilattice since every element is above the zero.
Thus the definition of an $E$-unitary inverse semigroup in the presence of a zero is uninteresting.
This bifurcation between inverse semigroups-without-zero and inverse semigroups-with-zero permeates the subject.}
\end{remark}

\subsection{The compatibility relation}

As a partially ordered set $I(X)$ has further properties.
The meet of any two partial bijections always exists, but joins are a different matter.
Given two partial bijections their union is not always another partial bijection;
to be so the partial bijections must satisfy a condition that forms the basis of our next definition.

Define $s \sim t$ iff $s^{-1}t,st^{-1} \in E(S)$.
This is called the {\em compatibility relation}.
It is reflexive and symmetric but not generally transitive.

\begin{lemma} A pair of elements bounded above is compatible.
\end{lemma}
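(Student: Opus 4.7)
The plan is to unpack the hypothesis using the characterizations of the natural partial order from Lemma~2.9 and then reduce the two conditions $s^{-1}t\in E(S)$ and $st^{-1}\in E(S)$ to products of idempotents, which are idempotent because idempotents commute in an inverse semigroup.

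Concretely, suppose $s,t\le u$. By Lemma~2.9 applied to $s\le u$, I can write $s=ue$ for some idempotent $e$, and also $s=gu$ for some idempotent $g$. Likewise $t=uf=hu$ for idempotents $f,h$. To check $s^{-1}t\in E(S)$, I compute using the ``$=ue$'' presentations together with Lemma~2.3(2):
\[
s^{-1}t = (ue)^{-1}(uf) = e\,u^{-1}u\,f.
\]
This is a product of the three idempotents $e$, $u^{-1}u$, $f$, and since idempotents in an inverse semigroup commute (by Proposition~2.2), any product of idempotents is again an idempotent (the commuting image of $E(S)$ is closed under multiplication). To check $st^{-1}\in E(S)$, I use instead the ``$=gu$ and $=hu$'' presentations:
\[
st^{-1} = (gu)(hu)^{-1} = g\,u u^{-1}\, h,
\]
which is again a product of three idempotents and hence idempotent.

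There is no real obstacle here; the only thing to be careful about is choosing the correct one of the two equivalent descriptions of $s\le u$ (namely $s=ue$ versus $s=gu$) on each side of the product so that the variable $u$ and $u^{-1}$ end up adjacent and collapse to the idempotent $u^{-1}u$ or $uu^{-1}$. Once that bookkeeping is right, the conclusion $s\sim t$ is immediate from the definition of the compatibility relation.
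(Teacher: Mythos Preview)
Your argument is correct. The paper's proof is slightly different and a bit slicker: from $s,t\le u$ it uses the monotonicity of the natural partial order under inverses and products (Proposition~2.11(2),(3)) to get $s^{-1}t\le u^{-1}u$ and $st^{-1}\le uu^{-1}$, and then invokes the fact that anything below an idempotent is an idempotent (Proposition~2.11(5)). So the paper never needs to choose particular factorizations $s=ue=gu$, $t=uf=hu$ or worry about which side to use where; the order-theoretic properties do the bookkeeping automatically. Your approach, by contrast, unpacks those order properties into explicit idempotent factors and then appeals directly to commutation of idempotents---essentially re-deriving the special case of Proposition~2.11 that you need. Both arguments are short and valid; the paper's is a touch more conceptual, yours a touch more hands-on.
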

\begin{proof} Let $s,t \leq u$.
Then $s^{-1}t \leq u^{-1}u$ and $st^{-1} \leq uu^{-1}$ so that $s \sim t$.
\end{proof}

A subset of an inverse semigroup is said to be {\em compatible} if the elements are pairwise compatible.
If a compatible subset has a least upper bound it is said to have a {\em join}.

\begin{lemma} 
$s \sim t$ if and only if $s \wedge t$ exists and 
$\dom (s \wedge t) = \dom (s) \wedge \dom (t)$
and
$\ran (s \wedge t) = \ran (s) \wedge \ran (t)$.
\end{lemma}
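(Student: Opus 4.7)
The plan is to prove both directions through straightforward idempotent arithmetic, exploiting that idempotents commute and are self-inverse.

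For $(\Rightarrow)$, compatibility tells us $s^{-1}t$ and $st^{-1}$ are idempotents, and since every idempotent is its own inverse I extract the symmetric identities $s^{-1}t = t^{-1}s$ and $st^{-1} = ts^{-1}$. I then propose $u := ts^{-1}s$ as the meet. Reading $u = t\cdot(s^{-1}s)$ gives $u \leq t$, and reading $u = (ts^{-1})\cdot s$ with the idempotent $ts^{-1}$ on the left gives $u \leq s$, both via Lemma~2.10. For maximality, if $v \leq s,t$, then $v = vv^{-1}t$ and $v = vs^{-1}s$ (the latter since $v \leq s$ writes $v = gs$ for an idempotent $g$, whence $vs^{-1}s = gss^{-1}s = gs = v$), so $vv^{-1}u = vv^{-1}ts^{-1}s = vs^{-1}s = v$, i.e.\ $v \leq u$. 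The domain and range computations then reduce to commutation of idempotents: $u^{-1}u = s^{-1}st^{-1}ts^{-1}s$ collapses to $s^{-1}s\cdot t^{-1}t = \dom(s)\wedge\dom(t)$, and $uu^{-1} = ts^{-1}st^{-1}$ equals $(st^{-1})^2 = st^{-1}$ after substituting $ts^{-1} = st^{-1}$; on the other hand $ss^{-1}tt^{-1} = s(s^{-1}t)t^{-1} = s(t^{-1}s)t^{-1} = (st^{-1})^2 = st^{-1}$, so $uu^{-1} = \ran(s)\wedge\ran(t)$.

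For $(\Leftarrow)$, assume $u = s\wedge t$ exists with $uu^{-1} = ss^{-1}tt^{-1}$ and $u^{-1}u = s^{-1}st^{-1}t$. From $u \leq t$ and Lemma~2.10, $u = uu^{-1}t = ss^{-1}tt^{-1}\cdot t = ss^{-1}t$; from $u \leq s$ (using $u = su^{-1}u$ by definition), $u = s\cdot s^{-1}st^{-1}t = st^{-1}t$. Then $s^{-1}t = s^{-1}(ss^{-1}t) = s^{-1}u = (s^{-1}s)(u^{-1}u)$, the last equality using $u = su^{-1}u$: this is a product of two idempotents, hence idempotent. Dually, $st^{-1} = (st^{-1}t)t^{-1} = ut^{-1} = (uu^{-1}t)t^{-1} = (uu^{-1})(tt^{-1})$ is idempotent. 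Therefore $s \sim t$.

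The main obstacle is choosing the right algebraic rewriting of $u$ at each stage; the one genuinely non-routine identity is $ss^{-1}tt^{-1} = st^{-1}$ under compatibility, which hinges on substituting $s^{-1}t = t^{-1}s$ to collapse the left-hand side into $(st^{-1})^2$. Everything else is routine idempotent bookkeeping.
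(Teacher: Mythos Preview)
Your proof is correct. Both directions are sound: in $(\Rightarrow)$ your candidate $u = ts^{-1}s$ really is the meet, the maximality argument via $vv^{-1}u = v$ is clean, and the identity $ss^{-1}tt^{-1} = (st^{-1})^2 = st^{-1}$ is a nice way to pin down the range. In $(\Leftarrow)$ the expressions $s^{-1}t = (s^{-1}s)(u^{-1}u)$ and $st^{-1} = (uu^{-1})(tt^{-1})$ are exactly what is needed.

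The paper's argument follows the same template---propose a concrete candidate for the meet, check it is a lower bound, prove maximality, then compute $\dom$ and $\ran$---but organizes it differently. Rather than assume full compatibility, the paper isolates one half: it shows that $st^{-1}$ idempotent \emph{alone} is equivalent to $s \wedge t$ existing together with the domain condition $\dom(s\wedge t) = \dom(s)\wedge\dom(t)$, taking $z = st^{-1}t$ as the candidate; the range half then follows by the dual argument. This split yields a marginally sharper statement (each of the two idempotency conditions corresponds to one of the two $\dom$/$\ran$ equalities), and it halves the writing. Your version instead uses both halves of the compatibility hypothesis simultaneously---for instance, your range computation needs both $ts^{-1} = st^{-1}$ and $s^{-1}t = t^{-1}s$---which is perfectly adequate for the lemma as stated but does not separate the two conditions.
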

\begin{proof}
We prove that $st^{-1}$ is an idempotent
if and only if 
the greatest  lower bound $s \wedge  t$ of $s$ and $t$ exists and 
$(s \wedge  t)^{-1}(s \wedge  t) = s^{-1}st^{-1}t$.  
The full result then follows by the dual argument.
Suppose that $st^{-1}$ is an idempotent. 
Put $z = st^{-1}t$.  
Then $z \leq  s$ and $z \leq  t$, since $st^{-1}$ is an idempotent.  
Let $w \leq  s,t$.  
Then $w^{-1}w \leq  t^{-1}t$ and so $w \leq  st^{-1}t = z$.  
Hence $z = s \wedge  t$.  
Also
$$z^{-1}z = (st^{-1}t)^{-1}(st^{-1}t) 
= t^{-1}ts^{-1}st^{-1}t = s^{-1}st^{-1}t.$$

Conversely, suppose that $s \wedge t$ exists and 
$(s \wedge  t)^{-1}(s \wedge  t) = s^{-1}st^{-1}t$.
Put $z = s \wedge t$.
Then $z = sz^{-1}z$ and $z = tz^{-1}z$.
Thus $sz^{-1}z = tz^{-1}z$, and so
$st^{-1}t = ts^{-1}s$.
Hence $st^{-1} = ts^{-1}st^{-1}$, which is an idempotent.
\end{proof}

Since the compatibility relation is not always transitive it is natural to ask when it is.
The answer might have been uninteresting but turns out not to be.

\begin{proposition} 
The compatibility relation is transitive if and only if the semigroup is $E$-unitary.
\end{proposition}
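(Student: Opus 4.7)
The plan is to prove the two implications separately, relying on Lemma~2.8 to translate $\leq$ into multiplication by an idempotent, on Lemma~2.14 that a pair bounded above is compatible, and on the observation that any two idempotents $e,f$ are compatible (since $e^{-1}f = ef = ef^{-1}$ is a product of commuting idempotents, hence itself idempotent).

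For the forward direction, I would assume $\sim$ is transitive and let $e \leq s$ with $e$ an idempotent. The aim is to build the short chain $ss^{-1} \sim e \sim s$: the first link holds because both terms are idempotents, and the second holds by Lemma~2.14 since $e \leq s$ and $s \leq s$. Transitivity then forces $ss^{-1} \sim s$, and unpacking the definition yields that $(ss^{-1})^{-1}s = ss^{-1}s = s$ is an idempotent, so $S$ is $E$-unitary.

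For the converse, suppose $S$ is $E$-unitary and $s \sim t$, $t \sim u$. The strategy is to exhibit an idempotent below $su^{-1}$ (and, symmetrically, below $s^{-1}u$), so that $E$-unitarity promotes each element itself to an idempotent. The natural candidate is $p = (st^{-1})(tu^{-1}) = st^{-1}tu^{-1}$, a product of two idempotents and hence itself idempotent. To prove $p \leq su^{-1}$ I would appeal to Lemma~2.8(3) and factor $p = g \cdot su^{-1}$ with $g = st^{-1}ts^{-1}$, which is idempotent by Lemma~2.5(3); the verification reduces to swapping the commuting idempotents $t^{-1}t$ and $s^{-1}s$ inside $s(t^{-1}t)(s^{-1}s)u^{-1}$ and collapsing via $ss^{-1}s = s$. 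A symmetric computation (or the same argument applied to the inverses of $s,t,u$) handles $s^{-1}u$. The main obstacle is precisely this factorisation — everything else is an immediate appeal to the earlier lemmas — but it comes out cleanly thanks to the commutativity of idempotents and Lemma~2.6-style manipulations.
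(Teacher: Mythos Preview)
Your proposal is correct and follows essentially the same route as the paper: in the forward direction you chain $ss^{-1} \sim e \sim s$ (the paper uses $s^{-1}s$ in place of $ss^{-1}$ and verifies $e \sim s$ by direct computation rather than via the bounded-above lemma), and in the converse you exhibit the product of the two given idempotents as an idempotent below $su^{-1}$, exactly as the paper does with $s^{-1}(tt^{-1})u \leq s^{-1}u$. Your explicit factorisation $p = g\cdot su^{-1}$ is a slightly more laboured version of the paper's one-line observation $s^{-1}(tt^{-1})u \leq s^{-1}u$, but the argument is the same.
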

\begin{proof}
Suppose that $\sim$ is transitive.  
Let $e \leq  s$, where $e$  is  an  idempotent.  
Then $se^{-1}$ is an idempotent because $e = se = se^{-1}$, 
and $s^{-1}e$ is an idempotent because $s^{-1}e \leq  s^{-1}s$. 
Thus $s \sim e$.  
Clearly $e \sim s^{-1}s$, and so, by our assumption that
the compatibility relation is transitive,
we have that $s \sim s^{-1}s$.  
But $s(s^{-1}s)^{-1} = s$, so that $s$ is an idempotent.

Conversely, suppose that $S$ is $E$-unitary 
and that $s \sim t$ and $t \sim u$.
Clearly $(s^{-1}t)(t^{-1}u)$ is an idempotent and
$$(s^{-1}t)(t^{-1}u) = s^{-1}(tt^{-1})u \leq s^{-1}u.$$
But $S$ is $E$-unitary and so $s^{-1}u$ is an idempotent.
Similarly, $su^{-1}$ is an idempotent.
Hence $s \sim u$.
\end{proof}

The $E^{\ast}$-unitary semigroups also enjoy a property that is more significant than it looks.

\begin{proposition} 
An $E^{\ast}$-unitary inverse semigroups has meets of all pairs of elements.
\end{proposition}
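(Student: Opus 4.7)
The plan is to split on whether $s$ and $t$ are compatible. If $s \sim t$, then Lemma~2.15 directly produces $s \wedge t$ (explicitly $st^{-1}t$), so nothing more is required. The substance of the proposition lies in the incompatible case, and there I will show that if $s \not\sim t$ then $s$ and $t$ have $0$ as their greatest lower bound. Note that since $S$ is $E^*$-unitary it is tacitly assumed to have a zero, so $0$ is a legitimate candidate for the meet.

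So suppose $s \not\sim t$, and let $w$ be an arbitrary lower bound of $s$ and $t$. The aim is to force $w = 0$; once that is done, $0$ is visibly the greatest lower bound. From $w \leq s$ and $w \leq t$, Proposition~2.11(2) gives $w^{-1} \leq t^{-1}$, and combining this with $w \leq s$ via Proposition~2.11(3) yields $ww^{-1} \leq st^{-1}$. The symmetric calculation (using $w^{-1} \leq s^{-1}$ with $w \leq t$) gives $w^{-1}w \leq s^{-1}t$. Now $ww^{-1}$ and $w^{-1}w$ are idempotents, and they are nonzero whenever $w \neq 0$ (since $w = ww^{-1}w$ would otherwise force $w = 0$). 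So if $w$ were nonzero, the $E^*$-unitary hypothesis would force both $st^{-1}$ and $s^{-1}t$ to be idempotents, contradicting $s \not\sim t$. Hence $w = 0$, which proves that $s \wedge t$ exists and equals $0$.

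The only real idea is the observation that $E^*$-unitarity converts the existence of a nonzero common lower bound of $s$ and $t$ into compatibility of $s$ and $t$; once that is seen, the rest is bookkeeping with the monotonicity properties of the natural partial order from Proposition~2.11. I do not anticipate any genuine obstacle in writing the argument out.
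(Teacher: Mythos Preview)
Your proof is correct and follows essentially the same approach as the paper: both arguments hinge on the observation that a nonzero common lower bound of $s$ and $t$ forces $st^{-1}$ and $s^{-1}t$ to be idempotents via the $E^{\ast}$-unitary hypothesis, and then invoke the compatibility--meet lemma; the paper merely organizes the case split as ``nonzero lower bound exists / only $0$ is below'' rather than your ``compatible / not compatible''. One small slip: the lemma you want for ``$s \sim t \Rightarrow s \wedge t$ exists'' is Lemma~2.16, not Lemma~2.15.
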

\begin{proof}
Let $s$ and $t$ be any pair of elements.
Suppose that there exists a non-zero element $u$ such that $u \leq s,t$.
Then $uu^{-1} \leq st^{-1}$ and $uu^{-1}$ is a non-zero idempotent.
Thus $st^{-1}$ is an idempotent.
Similarly $s^{-1}t$ is an idempotent.
It follows that $s \wedge t$ exists by Lemma~2.16.
If the only element below $s$ and $t$ is $0$ then $s \wedge t = 0$.
\end{proof}

In an inverse semigroup with zero there is a refinement of the compatibility relation which is important.
Define $s \perp t$ iff $s^{-1}t = 0 = st^{-1}$.
This is the {\em orthogonality relation}. 
If an orthogonal subset has a least upper bound then it is said to have an {\em orthogonal join}.

In the symmetric inverse monoid the union of compatible partial bijections is another partial bijection
and the union of an orthogonal pair of partial bijections is another partial bijection which is a disjoint union.

Inverse semigroups generalize groups: the single identity of a group is expanded into a semilattice of idempotents.
It is possible to go in the opposite direction and contract an inverse semigroup to a group.
On an inverse semigroup $S$ define the relation $\sigma $ by 
$$s \, \sigma \, t \Leftrightarrow \exists u \leq  s,t$$
for all $s,t \in S$.

\begin{theorem} Let $S$ be an inverse semigroup. 
\begin{enumerate}

\item $\sigma $ is the smallest congruence on $S$ containing 
the compatibility relation. 

\item $S/\sigma $ is a group. 

\item If $\rho $ is any congruence on $S$ such that $S/\rho $ 
is a group then $\sigma  \subseteq  \rho $. 

\end{enumerate}
\end{theorem}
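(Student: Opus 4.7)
The plan is to treat the three parts in order: first establish that $\sigma$ is a congruence containing the compatibility relation $\sim$, then observe that all idempotents collapse in $S/\sigma$, and finally reduce the general minimality claim to the minimality among congruences containing $\sim$.

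For part (1), reflexivity and symmetry of $\sigma$ are immediate from the definition. The main technical point is transitivity: given witnesses $v \leq s,t$ and $w \leq t,u$, both $v$ and $w$ lie below $t$, so by Lemma~2.15 they are compatible, and hence by Lemma~2.16 their meet $v \wedge w$ exists and provides a common lower bound of $s$ and $u$. Compatibility of $\sigma$ with multiplication is immediate from Proposition~2.11(3): lower bounds of the pairs $(s,s')$ and $(t,t')$ multiply to a lower bound of $(st,s't')$. That $\sim \subseteq \sigma$ follows from Lemma~2.16 once more. For minimality, suppose $\rho$ is any congruence containing $\sim$ and that $u \leq s,t$; then Lemma~2.15 yields $u \sim s$ and $u \sim t$, so $s \,\rho\, u \,\rho\, t$.

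For part (2), for any two idempotents $e$ and $f$ the product $ef$ lies below both $e$ and $f$ (by Proposition~2.11(4), using commutativity of idempotents), so $e \,\sigma\, f$. Hence $S/\sigma$ has a unique idempotent and is a group by Proposition~2.4. For part (3), if $\rho$ is any congruence with $S/\rho$ a group, I would first check that $\sim \subseteq \rho$: when $s \sim t$, the element $s^{-1}t$ is an idempotent, so its $\rho$-class is an idempotent of $S/\rho$ and hence the identity of that group, forcing $[s] = [t]$. The claim then follows from the minimality established in part (1).

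The only delicate point is the transitivity argument in (1); the rest is largely bookkeeping. The subtlety is that $\sim$ is not transitive in general, so one cannot simply take $\sigma = \sim$; what saves us is that a pair bounded above is not merely compatible but actually has a meet, which yields the required common lower bound for the two witnesses.
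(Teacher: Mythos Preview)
Your proof is correct and parts (1) and (2) follow the paper's argument essentially verbatim, including the key use of Lemmas~2.15 and~2.16 for transitivity. For part (3) you take a slightly different but equally short route: you show $\sim \subseteq \rho$ (using that an idempotent maps to the unique idempotent of the group $S/\rho$) and then invoke the minimality from part~(1), whereas the paper argues directly that if $z \leq a,b$ then $\rho(z) \leq \rho(a),\rho(b)$ in $S/\rho$, and since the natural partial order on a group is equality (Proposition~2.13) this forces $\rho(a)=\rho(b)$. Both arguments are one-liners; yours has the virtue of reusing the work already done in (1), while the paper's makes the order-theoretic reason for minimality explicit.
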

\begin{proof}
(1)
We begin by showing that $\sigma $ is an equivalence relation.  
Reflexivity and symmetry are immediate.  
To prove transitivity, let $(a,b),(b,c) \in \sigma$. 
Then there exist elements
$u,v \in S$ such that $u \leq a,b$ and $v \leq b,c$.
Thus $u,v \leq b$.
The set $b^{\downarrow}$ is a compatible subset and so $u \wedge v$ exists
by Lemma~2.15 and Lemma~2.16.
But $u \wedge v \leq a,c$ and so $(a,c) \in \sigma$.
The fact that $\sigma$ is a congruence 
follows from the fact that the natural 
partial order is compatible with the multiplication.  
If $s \sim t$ then by Lemma~2.16, the meet $s \wedge t$ exists.
Thus $s \sigma t$.
It follows that the compatibility relation is contained in the minimum group congruence.

Let $\rho $ be any congruence containing $\sim$, 
and let $ (a,b) \in  \sigma $.  
Then $z \leq  a,b$ for some $z$.  
Thus $z \sim a$ and $z \sim b$.  
By assumption $(z,a),(z,b) \in  \rho $.  
But $\rho $ is an equivalence and so $(a,b) \in  \rho $.  
Thus $\sigma  \subseteq  \rho $. 
This shows that $\sigma$ is the minimum group congruence.

(2) Clearly, all idempotents are contained in a single 
$\sigma $-class (possibly with non-idempotent elements).  
Consequently, $S/\sigma $ is an inverse semigroup with a single idempotent.  
Thus $S/\sigma $ is a group by Proposition~2.4.

(3) Let $\rho $ be any congruence such that $S/\rho $ is a group.  
Let $(a,b) \in  \sigma $.  
Then $z \leq  a,b$ for some $z$.  
Hence $\rho (z) \leq  \rho (a),\rho (b)$.  
But $S/\rho $ is a group and so its natural partial order is equality.  
Hence $\rho (a) = \rho (b)$.
\end{proof}

The congruence $\sigma$ is called the {\em minimum group congruence}
and the group $S/\sigma$ the {\em maximum group image} of $S$.
The properties of this congruence lead naturally to
the following result on the category of inverse semigroups.

\begin{theorem} The category of groups is a reflective subcategory of
the category of inverse semigroups.
\end{theorem}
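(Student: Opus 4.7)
The plan is to exhibit the assignment $S \mapsto S/\sigma$ as a left adjoint to the inclusion functor $U : \mathbf{Grp} \hookrightarrow \mathbf{InvSgp}$, with unit $\eta_S : S \to S/\sigma$ the canonical projection. That $S/\sigma$ is a group is Theorem~2.20(2), and $\eta_S$ is an inverse semigroup homomorphism by construction, so the data of the adjunction is in place; what remains is to check fullness of the subcategory and the universal property of $\eta_S$.

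First I would observe that groups form a \emph{full} subcategory of inverse semigroups. If $\varphi : G \to H$ is a semigroup homomorphism between groups, then $\varphi(e_G)$ is idempotent by Lemma~2.7(2); since $H$ has a unique idempotent $e_H$ by Proposition~2.4, we get $\varphi(e_G) = e_H$, and inverses are preserved by Lemma~2.7(1). Hence group homomorphisms coincide with inverse semigroup homomorphisms between groups, so fullness is automatic.

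The crux is the universal property. Given any inverse semigroup homomorphism $\varphi : S \to G$ with $G$ a group, I would show that $\varphi$ factors uniquely as $\varphi = \bar{\varphi} \circ \eta_S$ for some group homomorphism $\bar{\varphi} : S/\sigma \to G$. Let $\rho$ be the kernel congruence of $\varphi$. By Lemma~2.7(4), $\mathrm{Im}\,\varphi$ is an inverse subsemigroup of $G$; its only idempotent is $e_G$, so by Proposition~2.4 it is a group, and hence $S/\rho \cong \mathrm{Im}\,\varphi$ is a group. Theorem~2.20(3) then gives $\sigma \subseteq \rho$, so $\varphi$ descends to a well-defined homomorphism $\bar{\varphi} : S/\sigma \to G$, and uniqueness is immediate from surjectivity of $\eta_S$.

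Functoriality of $S \mapsto S/\sigma$ and naturality of $\eta$ drop out formally: for $f : S \to T$ in $\mathbf{InvSgp}$, applying the universal property to $\eta_T \circ f$ produces a unique induced homomorphism $S/\sigma_S \to T/\sigma_T$. The main obstacle is really only the observation that the image of a homomorphism from $S$ to a group is itself a group, and that rests on the clean characterization of groups as one-idempotent inverse semigroups (Proposition~2.4). With that in hand, everything else is bookkeeping on top of Theorem~2.20, and no further delicate computations are required.
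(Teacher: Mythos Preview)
Your argument is correct and follows essentially the same route as the paper's: verify the universal property of $\sigma^{\natural} \colon S \to S/\sigma$ by showing that any homomorphism $\theta \colon S \to G$ to a group has a group congruence as its kernel, so that $\sigma \subseteq \ker\theta$ by the preceding theorem and $\theta$ factors uniquely through $S/\sigma$. You supply more detail than the paper does (the explicit fullness check and the reason $\ker\theta$ is a group congruence via the one-idempotent characterization of groups), and your internal references labelled ``Theorem~2.20'' should in fact point to Theorem~2.19, but the substance is the same.
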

\begin{proof}
Let $S$ be an inverse semigroup and 
$\sigma^{\natural} \colon \: S \rightarrow S/\sigma$ the natural
homomorphism.
Let $\theta \colon \: S \rightarrow G$ be a homomorphism to a group $G$.
Then $\mbox{\rm ker}\,\theta$ is a group congruence on $S$ and so
$\sigma \subseteq \mbox{\rm ker}\,\theta$ by Theorem~2.19.
Thus by standard semigroup theory there is a unique homomorphism $\theta^{\ast}$ from $S/\sigma$ to $G$ 
such that $\theta = \theta^{\ast} \sigma^{\natural}$.
\end{proof}

It follows by standard category theory, such as Chapter~IV, Section~3 of \cite{Maclane},
that there is a functor from the category of 
inverse semigroups to the category of groups which takes each 
inverse semigroup $S$ to $S/\sigma$. 
If $\theta \colon S \rightarrow  T$ is a homomorphism 
of inverse semigroups then the function  
$\psi \colon  S/\sigma  \rightarrow  T/\sigma $  
defined by $\psi(\sigma (s)) = \sigma (\theta (s))$  
is the corresponding group homomorphism (this can be checked directly).

For inverse semigroups with zero the minimum group congruence is not very interesting
since the group degenerates to the trivial group.
In this case, replacements have to be found.

\begin{remark}
{\em Constructing groups from inverse semigroups might seem a retrograde step
but some important groups arise most naturally as maximum group images of inverse semigroups.}
\end{remark}

\subsection{The underlying groupoid}

The product we have defined on the symmetric inverse monoid $I(X)$ is not the only one nor perhaps even the most obvious.
Given partial bijections $f$ and $g$ we might also want to define $fg$ only when the domain of $f$ is equal to the range of $g$.
When we do this we are regarding $f$ and $g$ as being functions rather than partial functions.
With respect to this restricted product $I(X)$ becomes a groupoid.
A  groupoid is a (small) category in which every arrow is an isomorphism.
Groupoids can be viewed as generalizations of both groups and equivalence relations.
We now review the basics of groupoid theory we shall need.

Categories are usually regarded 
as categories of structures with morphisms.
But they can also be regarded as algebraic structures
no different from groups, rings and fields except that the binary operation
is only partially defined.
We define categories from this purely algebraic point of view.

Let $C$ be a set equipped with a partial binary operation 
which we shall denote by $\cdot$ or by concatenation.  
If $x,y \in C$ and the product $x \cdot y$ is defined we write 
$\exists x \cdot y$.  
An element $e \in C$ is called an {\em identity}
if 
$\exists e \cdot x$ implies $e \cdot x=x$ and $\exists x \cdot e$
implies $x \cdot e=x$. 
The set of identities of $C$ is denoted $C_{o}$; 
the subscript `o' stands for `object'.
The pair $(C, \cdot )$ is said to be a {\em category} 
if the following 
axioms hold:  

\begin{description}

\item[{\rm (C1)}] $x \cdot (y \cdot z)$ exists 
if, and only if, $(x \cdot y) \cdot z$ exists, in which case they are equal. 

\item[{\rm (C2)}] $x \cdot (y \cdot z)$ exists if, and only if, 
$x \cdot y$ and $y \cdot z$ exist. 

\item[{\rm (C3)}] For each $x \in C$ there exist identities 
$e$ and $f$ such that $\exists x \cdot e$ and $\exists f \cdot x$. 

\end{description}

From axiom (C3), it follows that the identities $e$ and $f$ 
are uniquely determined by $x$. 
We write $e = {\bf d} (x)$ and $f = {\bf r} (x)$, where 
${\bf d}(x)$ is the {\em domain} identity and ${\bf r}(x)$ 
is the {\em range} identity.
Observe that $\exists x \cdot y$ if, 
and only if, ${\bf d} (x) = {\bf r} (y)$. 

The elements of a category are called {\em arrows}.
If $C$ is a category and $e$ and $f$ identities 
in $C$ then we put 
$$\mbox{\rm hom}(e,f) 
= \{x \in C \colon \: {\bf d}(x) = e \mbox{ and } {\bf r}(x) = f\},$$
the set of {\em arrows from $e$ to $f$}.
Subsets of $C$ of the form $\mbox{\rm hom}(e,f)$
are called {\em hom-sets}.\index{hom-set}
We also put $\mbox{\rm end}(e) = \mbox{\rm hom}(e,e)$,
the {\em local monoid at $e$}.
A category $C$ is said to be a {\em groupoid} 
if
for each $x \in C$ there is an element $x^{-1}$ such that
$x^{-1}x = {\bf d}(x)$ and $xx^{-1} = {\bf r}(x)$.
The element $x^{-1}$ is unique with these properties. 
Two elements $x$ and $y$ of a groupoid are said to be 
{\em connected}
if there is an element starting at ${\bf d}(x)$
and ending at ${\bf d}(y)$.
This is an equivalence relation whose equivalence classes 
are called the {\em connected components} 
of the groupoid.
A groupoid with one connected component is said to be
{\em connected}.

Motivated by the symmetric inverse monoid, define the {\em restricted product}
in an inverse semigroup by $s \cdot t = st$ if $s^{-1}s = tt^{-1}$ and undefined otherwise.

\begin{proposition} Every inverse semigroup $S$ is 
a groupoid with respect to its restricted product.
\end{proposition}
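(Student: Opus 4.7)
The plan is to identify the identities of the partial algebra $(S,\cdot)$ with the idempotents $E(S)$, and then verify axioms (C1), (C2), (C3) together with the existence of groupoid inverses by direct computation, using associativity of the underlying semigroup product and the fact that idempotents commute.

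First I would show that the identities of the restricted product are exactly the elements of $E(S)$. If $e \in E(S)$, then $e^{-1} = e$, so $e \cdot x$ is defined precisely when $e = xx^{-1}$; in that case $e \cdot x = ex = xx^{-1}x = x$, and symmetrically $x \cdot e = x$ whenever defined. Hence every idempotent is an identity. Conversely, for any $s \in S$, the elements $s^{-1}s$ and $ss^{-1}$ satisfy $s^{-1}s = (s^{-1}s)(s^{-1}s)^{-1}$ and $(ss^{-1})^{-1}(ss^{-1}) = ss^{-1}$, so $s \cdot (s^{-1}s) = s$ and $(ss^{-1}) \cdot s = s$, establishing (C3) with ${\bf d}(s) = s^{-1}s$ and ${\bf r}(s) = ss^{-1}$. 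The uniqueness of identities as ${\bf d},{\bf r}$-values follows from the previous characterization.

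For (C2), suppose $s \cdot t$ and $t \cdot u$ both exist, so $s^{-1}s = tt^{-1}$ and $t^{-1}t = uu^{-1}$. Then
$$(tu)(tu)^{-1} = tuu^{-1}t^{-1} = t(t^{-1}t)t^{-1} = tt^{-1} = s^{-1}s,$$
so $s \cdot (tu)$ is defined; a symmetric calculation gives $(st) \cdot u$. Conversely, if $t \cdot u$ and $s \cdot (tu)$ exist, then $t^{-1}t = uu^{-1}$ and $s^{-1}s = (tu)(tu)^{-1} = tt^{-1}$ by the same reduction, so $s \cdot t$ exists; the symmetric argument handles $(s \cdot t) \cdot u$. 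Once existence on both sides is matched up, (C1) is immediate from associativity in $S$.

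Finally, for the groupoid inverse, I would take $x^{-1}$ (the semigroup inverse) as the groupoid inverse: $(s^{-1})^{-1}s^{-1} = ss^{-1} = (s^{-1})(s^{-1})^{-1}$ in the sense of matching, so $s \cdot s^{-1}$ and $s^{-1} \cdot s$ are both defined, yielding ${\bf r}(s)$ and ${\bf d}(s)$ respectively. There is no real obstacle in the argument; the only mildly delicate point is checking both directions of the biconditionals in (C2), and this reduces cleanly to the identities $tt^{-1} \cdot t = t$ and the commutativity of idempotents.
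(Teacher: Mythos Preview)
Your proof is correct and follows essentially the same route as the paper's: show that idempotents act as identities for the restricted product, verify the existence biconditionals for (C2) (from which (C1) drops out by associativity of the semigroup product), establish (C3) via $\mathbf{d}(s)=s^{-1}s$ and $\mathbf{r}(s)=ss^{-1}$, and take the semigroup inverse as the groupoid inverse. One small slip to fix: the displayed chain $(s^{-1})^{-1}s^{-1} = ss^{-1} = (s^{-1})(s^{-1})^{-1}$ would force $ss^{-1}=s^{-1}s$, which is false in general; what you actually need (and clearly intend) are the two separate checks $s^{-1}s = s^{-1}(s^{-1})^{-1}$ and $(s^{-1})^{-1}s^{-1} = ss^{-1}$, each of which is immediate from $(s^{-1})^{-1}=s$.
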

\begin{proof}
We begin by showing that all idempotents
of $S$ are identities of $(S,\cdot)$.
Let $e \in S$ be an idempotent and suppose that $e \cdot x$ is defined.  
Then $e = xx^{-1}$ and $e \cdot x = ex$.  
But $ex = (xx^{-1})x = x$.  
Similarly, if $x \cdot e$ is defined then it is equal to $x$.  
We now check that the axioms (C1), (C2) and (C3) hold.

Axiom (C1) holds: suppose that $x \cdot (y \cdot z)$ is defined.  
Then 
$$x^{-1}x = (y \cdot z)(y \cdot z)^{-1} 
\mbox{ and } y^{-1}y = zz^{-1}.$$  
But 
$$(y \cdot z)(y \cdot z)^{-1} = yzz^{-1}y^{-1} = yy^{-1}.$$
Hence $x^{-1}x = yy^{-1}$, and so
$x \cdot y$ is defined.
Also $(xy)^{-1}(xy) = y^{-1}y = zz^{-1}$.
Thus $(x\cdot y) \cdot z$ is defined.
It is clear that $x \cdot (y \cdot z)$ is equal to
$(x \cdot y) \cdot z$. 
A similar argument shows that if $(x \cdot y) \cdot z$ exists 
then $x \cdot (y \cdot z)$ 
exists and they are equal. 

Axiom (C2) holds: suppose that $x \cdot y$ and $y \cdot z$ are defined.  
We show that $x \cdot (y \cdot z)$ is defined.  
We have that $x^{-1}x = yy^{-1}$ and $y^{-1}y = zz^{-1}$. 
Now 
$$(yz)(yz)^{-1} = y(zz^{-1})y^{-1} = y(y^{-1}y)y^{-1} = yy^{-1} = x^{-1}x.$$
Thus $x \cdot (y \cdot z)$ is defined.
The proof of the converse is straightforward.

Axiom (C3) holds: for each element $x$ we have that 
$x \cdot (x^{-1}x)$ is defined,
and we have seen that idempotents of $S$ are identities.  
Thus we put ${\bf d}(x) = x^{-1}x$.  
Similarly, we put $xx^{-1} = {\bf r}(x)$. 
It is now clear that $(S,\cdot)$ is a category.
The fact that it is a groupoid is immediate.
\end{proof}

We call $(S,\cdot)$ the {\em underlying groupoid} of $S$ or the {\em Ehresmann groupoid} of $S$
since it was first used by the differential geometer Charles Ehresmann.
The above result leads to the following pictorial representation of the elements of an inverse semigroup.
Recall that $\mathbf{d}(s) = s^{-1}s$, which we now call the {\em domain idempotent} of $s$, 
and that $\mathbf{r}(s) = ss^{-1}$, which we now call the {\em range idempotent} of $s$.
We can regard $s$ as an {\em arrow}
$$\diagram
&\mathbf{r}(s)
&
&\mathbf{d}(s) \llto_{s}
\enddiagram$$

The following result is more significant than it looks;
it will form the basis of Section~2 of Chapter~2.
If you draw a picture and imagine the elements are partial bijections you will see exactly what is going on.

\begin{proposition} Let $S$ be an inverse semigroup.
Then for any $s,t \in S$ there exist elements $s'$ and $t'$ such that
$st = s' \cdot t'$ where the product on the right is the restricted product.
\end{proposition}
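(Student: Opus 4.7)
The plan is to guess explicit witnesses $s'$ and $t'$ obtained by cutting $s$ and $t$ down by appropriate idempotents, and then verify two things: that their ordinary product still equals $st$, and that the domain idempotent of $s'$ coincides with the range idempotent of $t'$ (so that the restricted product $s'\cdot t'$ is defined).

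The natural candidates are $s' = s(tt^{-1})$ and $t' = (s^{-1}s)t$. In other words, I restrict $s$ on the right by the range idempotent of $t$, and restrict $t$ on the left by the domain idempotent of $s$. Both $tt^{-1}$ and $s^{-1}s$ are idempotents, so by Proposition~2.11(3) we have $s' \leq s$ and $t' \leq t$.

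First I verify $s't' = st$. Expanding,
\[
s't' \;=\; s(tt^{-1})(s^{-1}s)t \;=\; s(s^{-1}s)(tt^{-1})t \;=\; (ss^{-1}s)(tt^{-1}t) \;=\; st,
\]
where the middle step uses the commutativity of idempotents (Proposition~2.1) and the final step uses $ss^{-1}s = s$ and $tt^{-1}t = t$. Next I verify $\dom(s') = \ran(t')$. Using $(se)^{-1} = es^{-1}$ when $e$ is idempotent, together with idempotent commutativity,
\[
(s')^{-1}s' \;=\; (tt^{-1})s^{-1}s(tt^{-1}) \;=\; s^{-1}s\,tt^{-1},
\]
and symmetrically
\[
t'(t')^{-1} \;=\; (s^{-1}s)tt^{-1}(s^{-1}s) \;=\; s^{-1}s\,tt^{-1}.
\]
These agree, so $s'\cdot t'$ is defined in the restricted product, and it equals $st$ by the first calculation.

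There is really no main obstacle beyond choosing $s'$ and $t'$ cleverly; once the right idempotent adjustments are inserted, every remaining step is a direct calculation that rests only on the commutativity of idempotents and the basic identities $s = ss^{-1}s$, $t = tt^{-1}t$. A picture in the symmetric inverse monoid motivates the choice: we shrink the domain of $s$ to meet the range of $t$ so that the two partial bijections become composable as honest functions.
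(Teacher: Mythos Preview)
Your proof is correct and is essentially the same as the paper's. The paper sets $e = \dom(s)\ran(t) = s^{-1}s\,tt^{-1}$ and takes $s' = se$, $t' = et$; since $s(s^{-1}s) = s$ and $(tt^{-1})t = t$, these simplify to precisely your choices $s' = s(tt^{-1})$ and $t' = (s^{-1}s)t$, and your explicit verifications of $s't' = st$ and $\dom(s') = \ran(t') = s^{-1}s\,tt^{-1}$ just spell out what the paper records in one line.
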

\begin{proof} Put $e = \dom (s) \ran (t)$ and define $s' = se$ and $t' = et$.
Observe that $\dom (s') = e$ and $\ran (t') = e$ and that $st = s't'$.
\end{proof}

At this point, it is natural to define some relations, called {\em Green's relations}, 
which can be defined in any semigroup but assume particularly simple forms in inverse semigroups.
We define 
$s \mathcal{L} t$ iff $\dom (s) = \dom (t)$;
$s \mathcal{R} t$ iff $\ran (s) = \ran (t)$;
and $\mathcal{H} = \mathcal{L} \cap \mathcal{R}$ which corresponds to the $\mbox{hom}$-sets.
We define $s \mathcal{D} t$ iff $s$ and $t$ belong to the same connected component of the underlying groupoid.
If $\mathcal{K}$ is any one of Green's relation then $K_{s}$ denotes the $\mathcal{K}$-class containing $s$.

\begin{lemma} \mbox{} 
\begin{enumerate}

\item If $s \leq t$ and either $s \mathcal{L} t$ or $s \mathcal{R} t$ then $s = t$.

\item If $s \sim t$ and either $s \mathcal{L} t$ or $s \mathcal{R} t$ then $s = t$.

\item If $s \sim t$ and either $\dom(s) \leq \dom (t)$ or  $\ran (s) \leq \ran (t)$ then $s \leq t$.

\end{enumerate}
\end{lemma}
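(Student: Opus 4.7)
My plan is to prove the three parts in the order (1), (3), (2), since (2) will follow cheaply from (3) by symmetry.

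For (1), I would just unwind the definitions. Since $s \leq t$ means $s = ts^{-1}s$, the $\mathcal{L}$-case is immediate: substitute $s^{-1}s = t^{-1}t$ to get $s = tt^{-1}t = t$. For the $\mathcal{R}$-case I would first invoke Lemma~2.9 to rewrite $s \leq t$ as $s = ss^{-1}t$, and then substitute $ss^{-1} = tt^{-1}$ to again get $s = tt^{-1}t = t$.

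For (3), the key observation is that the compatibility hypothesis gives $st^{-1}$ and $s^{-1}t$ as idempotents (hence self-inverse, so $st^{-1} = ts^{-1}$ and $s^{-1}t = t^{-1}s$). Suppose $\dom(s) \leq \dom(t)$, i.e.\ $s^{-1}s = s^{-1}s \cdot t^{-1}t$. Then I would compute
\[
s \;=\; ss^{-1}s \;=\; s(s^{-1}s)(t^{-1}t) \;=\; s \cdot t^{-1}t \;=\; (st^{-1})t,
\]
and since $st^{-1}$ is an idempotent, Lemma~2.9(3) gives $s \leq t$. The $\ran(s) \leq \ran(t)$ case is dual: use $ss^{-1} = tt^{-1} \cdot ss^{-1}$ to get $s = tt^{-1}s = t(t^{-1}s)$ and apply Lemma~2.9(2), noting $t^{-1}s$ is idempotent because it equals $(s^{-1}t)^{-1}$.

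Finally, (2) follows at once from (3): if $s \sim t$ and $s\,\mathcal{L}\,t$, then $\dom(s) = \dom(t)$, so both $\dom(s) \leq \dom(t)$ and $\dom(t) \leq \dom(s)$ hold, giving $s \leq t$ and $t \leq s$ via (3), whence $s = t$. The $\mathcal{R}$-case is identical with $\ran$ in place of $\dom$.

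The main obstacle is (3), specifically identifying the right algebraic manipulation to convert the one-sided inequality on idempotents into a factorization of $s$ through $t$ of the form required by Lemma~2.9. The trick is to let the hypothesis $\dom(s) \leq \dom(t)$ absorb a factor of $t^{-1}t$ on the right of $s$, producing the factorization $s = (st^{-1})t$; the compatibility hypothesis then ensures the prefactor is actually an idempotent. Everything else is bookkeeping.
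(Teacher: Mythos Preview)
Your argument is correct, but it takes a different route from the paper's. The paper proves (1) as you do, and then derives both (2) and (3) from Lemma~2.16: since $s \sim t$ guarantees that $s \wedge t$ exists with $\dom(s \wedge t) = \dom(s)\dom(t)$, the hypothesis $\dom(s) \leq \dom(t)$ forces $\dom(s \wedge t) = \dom(s)$, and then (1) gives $s \wedge t = s$, hence $s \leq t$; part (2) follows by the same device applied symmetrically. Your approach instead bypasses the meet entirely and works directly from the factorization characterizations in Lemma~2.10 (which you cite as Lemma~2.9 --- note the numbering is off by one, since 2.9 is an Example): you manufacture $s = (st^{-1})t$ from $\dom(s) \leq \dom(t)$ and use compatibility only to certify that the prefactor is idempotent. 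This is more elementary in that it avoids invoking the existence and properties of $s \wedge t$, and your ordering (1), (3), (2) makes the logical dependencies cleaner. The paper's route, on the other hand, exhibits (2) and (3) as immediate corollaries of a single structural fact about meets of compatible pairs, which is conceptually tidy once Lemma~2.16 is in hand.
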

\begin{proof}
(1) Suppose that $s \leq t$ and $\dom (s) = \dom (t)$.
Then $s = ts^{-1}s = tt^{-1}t = t$.

(2) Suppose that $s \sim t$ and $\dom (s) = \dom (t)$.
Then $s \wedge t$ exists and $\dom (s \wedge t) = \dom (s)$ by Lemma~2.16.
By (1) above $s \wedge t = s$ and $s \wedge t = t$ and so $s = t$.

(3) Suppose that $s \sim t$ and $\dom(s) \leq \dom (t)$.
Then $s \wedge t$ exists and $\dom (s \wedge t) = \dom (s)$ by Lemma~2.16.
Thus $s \wedge t = s$ and so $s \leq t$.
 \end{proof}

If $\theta \colon S \rightarrow T$ then for each element $s \in S$ the map $\theta$ induces a function
from $L_{s}$ to $L_{\theta (s)}$ by restriction.
If all these restricted maps are injective (respectively, surjective)
we say that $\theta$ is {\em star injective} (respectively, {\em star surjective}).
In the literature, star injective homomorphisms are also referred to as {\em idempotent-pure}
maps on the strength of the following lemma.
We shall use this term when referring to congruences.

\begin{lemma} Let $\theta \colon S \rightarrow T$ be a homomorphism between inverse semigroups.
The following are equivalent
\begin{enumerate}

\item $\theta$ is  is star injective

\item Whenever $\theta (s)$ is an idempotent then $s$ is an idempotent.

\item The kernel of $\theta$ is contained in the compatibility relation.

\end{enumerate}

\end{lemma}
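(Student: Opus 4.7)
I would prove the equivalence as a cycle $(1) \Rightarrow (2) \Rightarrow (3) \Rightarrow (1)$, using Lemma~2.25 as the key input in one direction.

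For $(1) \Rightarrow (2)$, my plan is to observe that $s$ and $s^{-1}s$ always lie in the same $\mathcal{L}$-class, because $\dom(s) = s^{-1}s = \dom(s^{-1}s)$. If $\theta(s)$ is an idempotent, then it is self-inverse, so a short calculation gives
\[
\theta(s^{-1}s) = \theta(s)^{-1}\theta(s) = \theta(s)\theta(s) = \theta(s),
\]
and star injectivity forces $s = s^{-1}s$, which is an idempotent.

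For $(2) \Rightarrow (3)$, suppose $\theta(s) = \theta(t)$. I would just compute
\[
\theta(s^{-1}t) = \theta(s)^{-1}\theta(t) = \theta(s)^{-1}\theta(s),
\]
which is an idempotent, so by (2) the element $s^{-1}t$ is itself idempotent; the argument for $st^{-1}$ is symmetric. Hence $s \sim t$, and the kernel is contained in the compatibility relation.

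For $(3) \Rightarrow (1)$, I would take $s,t$ with $s \mathcal{L} t$ and $\theta(s) = \theta(t)$. Since $(s,t) \in \ker \theta \subseteq \,\sim\,$ by (3), we have $s \sim t$, and now Lemma~2.25(2) — which says that compatible $\mathcal{L}$-related elements coincide — delivers $s = t$, proving star injectivity.

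The only step with any subtlety is $(1) \Rightarrow (2)$, where one must spot the right pair of $\mathcal{L}$-related elements to which star injectivity is applied; the other two implications are essentially formal manipulations together with Lemma~2.25. So the main (mild) obstacle is noticing that comparing $s$ with $s^{-1}s$ is the right move, rather than trying to compare $s$ with $s^{2}$ or with some other putatively idempotent expression.
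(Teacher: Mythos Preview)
Your argument is correct and follows essentially the same cycle $(1)\Rightarrow(2)\Rightarrow(3)\Rightarrow(1)$ as the paper, with the same key comparison of $s$ and $s^{-1}s$ in the first implication. One small correction: the result you invoke in $(3)\Rightarrow(1)$ --- that compatible $\mathcal{L}$-related elements coincide --- is Lemma~2.24(2), not Lemma~2.25; the latter is the very statement you are proving.
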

\begin{proof} (1)$\Rightarrow$(2).
Let $\theta$ be star injective and suppose that $\theta (s)$ is an idempotent.
Then $\theta (s^{-1}s) = \theta (s)$ since idempotents are self-inverse.
But $\theta$ is star injective and so $s^{-1}s = s$.

(2)$\Rightarrow$(3). Let $\theta (s) = \theta (t)$.
Then $\theta (s^{-1}s) = \theta (s^{-1}t)$ and so $s^{-1}t$ is an idempotent.
By symmetry $st^{-1}$ is an idempotent and so $s$ and $t$ are compatible.

(3)$\Rightarrow$(1). Let $\theta (s) = \theta (t)$ and $s \mathcal{L} t$.
Then $s \sim t$ and so $s = t$ by Lemma~2.24.
\end{proof}

The $E$-unitary inverse semigroups also arise naturally in the context of star injective homomorphisms.

\begin{theorem} Let $S$ be an inverse semigroup.  
Then the following conditions are equivalent: 
\begin{enumerate}

\item $S$ is $E$-unitary. 
 
\item $\sim ~ = \sigma $. 
 
\item $\sigma$ is idempotent pure.

\item $\sigma (e) = E(S)$ for any idempotent $e$.

\end{enumerate}
\end{theorem}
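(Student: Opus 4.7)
The plan is to organize the four equivalences as a hub through (1), establishing (1)$\Leftrightarrow$(2), (1)$\Leftrightarrow$(3), and (3)$\Leftrightarrow$(4); everything reduces to previously stated facts, so the work is mostly bookkeeping on definitions.

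For (1)$\Leftrightarrow$(2), one direction is already bundled into Theorem~2.19: $\sim\,\subseteq\,\sigma$ always. Going forward from (1), I would take $(s,t)\in\sigma$, use the definition to pick $u\leq s,t$, invoke Lemma~2.15 twice to get $s\sim u$ and $u\sim t$, and then apply Proposition~2.17 (transitivity of $\sim$ under $E$-unitary) to conclude $s\sim t$. For (2)$\Rightarrow$(1), the trick is noticing that $\sigma$, being a congruence, is automatically transitive; so $\sim\,=\,\sigma$ forces $\sim$ to be transitive, and Proposition~2.17 immediately returns $E$-unitary.

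For (1)$\Leftrightarrow$(3), I first pin down what ``idempotent pure'' means for the congruence $\sigma$: via Lemma~2.25, it is the statement that $(s,e)\in\sigma$ with $e$ idempotent implies $s$ is idempotent. Assuming $E$-unitary, if $s\,\sigma\,e$ with $e$ idempotent, the definition of $\sigma$ produces $u\leq s,e$; Proposition~2.11(5) makes $u$ idempotent (since $u\leq e$), and then $u\leq s$ together with $E$-unitary delivers that $s$ is idempotent. Conversely, assuming (3), if $e\leq s$ for an idempotent $e$, then $e$ itself witnesses $e\,\sigma\,s$, so (3) forces $s$ to be idempotent, which is exactly $E$-unitary.

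For (3)$\Leftrightarrow$(4), I would recall from the proof of Theorem~2.19 that the whole set $E(S)$ sits inside a single $\sigma$-class, so $\sigma(e)\supseteq E(S)$ for every idempotent $e$; condition (4) just asserts there are no extra elements in that class, which is precisely the idempotent-pure condition (3).

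The main obstacle is not any single implication but a small conceptual one: matching up ``$\sigma$ is idempotent pure'' with its concrete meaning $(s\,\sigma\,e\text{ idempotent}\Rightarrow s\text{ idempotent})$ via Lemma~2.25, and remembering that the key non-trivial input is Proposition~2.17, which does all the heavy lifting behind (1)$\Leftrightarrow$(2).
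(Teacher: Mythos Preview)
Your argument is correct, but the route differs from the paper's in two noticeable ways. First, the paper argues cyclically, $(1)\Rightarrow(2)\Rightarrow(3)\Rightarrow(4)\Rightarrow(1)$, whereas you organize everything as a hub through~(1). Second, and more substantively, your $(1)\Rightarrow(2)$ goes via Proposition~2.17: from $u\leq s,t$ you get $u\sim s$ and $u\sim t$ by Lemma~2.15, then invoke transitivity of $\sim$ in the $E$-unitary case. The paper instead argues directly: from $z\leq a,b$ one has $z^{-1}z\leq a^{-1}b$ and $zz^{-1}\leq ab^{-1}$, and $E$-unitary immediately forces $a^{-1}b$ and $ab^{-1}$ to be idempotents, so $a\sim b$. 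Your version is more modular (it recycles Proposition~2.17 as a black box), while the paper's is shorter and self-contained. Similarly, the paper passes from (2) to (3) by quoting Lemma~2.25 (a congruence is idempotent pure iff it is contained in $\sim$), whereas you sidestep this by proving $(1)\Rightarrow(3)$ directly; your direct argument and the paper's $(4)\Rightarrow(1)$ are essentially identical. Both approaches are fine; neither is clearly superior.
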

\begin{proof}
(1)$\Rightarrow$(2).  
We have already used the fact that the compatibility relation is contained in $\sigma$.
Let $(a,b) \in  \sigma $.  
Then $z \leq  a,b$ for some $z$.  
It follows that $z^{-1}z \leq  a^{-1}b$ and $zz^{-1} \leq  ab^{-1}$.  
But $S$ is $E$-unitary and so $a^{-1}b$ and $ab^{-1}$ are both idempotents.  
Hence $a \sim b$. 

(2)$\Rightarrow$(3). 
By Lemma~2.25 a congruence is idempotent pure precisely
when it is contained in the compatibility relation.

(3) $\Rightarrow$ (4).  
This is immediate from the definition of an idempotent pure congruence.

(4) $\Rightarrow (1)$
Suppose that $e \leq a$ where $e$ is an idempotent.
Then $(e,a) \in \sigma$.
But by (4), the element $a$ is an idempotent.
\end{proof}

The way in which the class of $E$-unitary inverse semigroups recurs is a reflection of the importance of
this class of inverse semigroups in the history of the subject.

In addition to the underlying groupoid, we may sometimes be able to associate another, smaller, groupoid to an inverse semigroup with zero.
Let $S$ be an inverse semigroup with zero.
An element $s \in S$ is said to be {\em $0$-minimal} if $t \leq s$ implies that $t = 0$ or $t = s$.
The set of $0$-minimal elements of $S$, if non-empty, forms a groupoid called the {\em minimal groupoid} of $S$.

\begin{example}
{\em The symmetric inverse monoid $I(X)$ has an interesting minimal groupoid.
It consists of those partial bijections who domains consist of exactly one element of $X$.
This groupoid is isomorphic to the groupoid $X \times X$ with product given by $(x,y)(y,z) = (x,z)$.
This is just the groupoid corresponding to the universal relation on $X$.
When $X$ is finite every partial bijection of $X$ can be written as an orthogonal join of elements of the minimal groupoid.
This simple example has far-reaching consequences as we shall see in a later chapter
}
\end{example}

\section{Some examples}\setcounter{theorem}{0}

So far, our range of examples of inverse semigroups is not very extensive.
This state of affairs is something we can now rectify using the tools we have available.
We describe three examples: groupoids with zero adjoined, presheaves of groups, and semidirect products of semilattices by groups.

\subsection{Groupoids with zero adjoined}

Category theorists may shudder at this example but a similar idea lies behind the construction of matrix rings from matrix units.

\begin{proposition}
Groupoids with zero adjoined are precisely the inverse semigroups in which the natural partial order is equality
when restricted to the set of non-zero elements.
\end{proposition}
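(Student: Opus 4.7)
\medskip

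The plan is to prove each direction by leveraging Propositions~2.22 and~2.23, which already give us a groupoid structure on any inverse semigroup and a way to factor an arbitrary product through a restricted product.

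For the forward direction, let $G$ be a groupoid and form $G^{0} = G \cup \{0\}$ with multiplication $x \cdot y$ equal to the groupoid composite when $\mathbf{d}(x) = \mathbf{r}(y)$, and $0$ otherwise (extended so that $0$ absorbs everything). I would first check associativity case by case (which reduces to (C1) and the defining property of $0$), and verify that each non-zero element $x$ has the groupoid inverse $x^{-1}$ as its unique semigroup inverse, while $0$ is self-inverse; uniqueness then follows from Proposition~2.1 since the idempotents of $G^{0}$ are exactly the identities of $G$ together with $0$, and any two identities of $G$ have product either themselves (if equal) or $0$ (if not), so they commute. For the order condition: if $s \leq t$ with $s, t \neq 0$, then $s = te$ for some idempotent $e$ by Lemma~2.10; since $s \neq 0$, the idempotent $e$ is an identity of $G$, and $te$ is defined (and equals $t$) only when $e = \mathbf{d}(t)$, forcing $s = t$.

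For the converse, suppose $S$ is an inverse semigroup with zero in which the natural partial order is equality on $S \setminus \{0\}$. Let $G = S \setminus \{0\}$ equipped with the restricted product from Proposition~2.22; note that if $s \neq 0$ then $\mathbf{d}(s), \mathbf{r}(s) \neq 0$ (else $s = s\mathbf{d}(s) = 0$), so $G$ inherits all the groupoid structure except possibly the identity element $0$. The key step is to show that for $s,t \in G$, the semigroup product $st$ equals the restricted product $s \cdot t$ if $\mathbf{d}(s) = \mathbf{r}(t)$, and equals $0$ otherwise. By Proposition~2.23 I can write $st = s' \cdot t'$ with $s' = se$ and $t' = et$ where $e = \mathbf{d}(s)\mathbf{r}(t)$, and both $s' \leq s$, $t' \leq t$. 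If $st \neq 0$ then $s', t' \neq 0$, so by hypothesis $s' = s$ and $t' = t$; the equation $se = s$ forces $\mathbf{d}(s) \leq e$, while $e \leq \mathbf{d}(s)$ automatically, hence $e = \mathbf{d}(s)$, and symmetrically $e = \mathbf{r}(t)$, so $\mathbf{d}(s) = \mathbf{r}(t)$ and $st$ is the restricted product. Conversely, if $\mathbf{d}(s) = \mathbf{r}(t) = e$ then $stt^{-1}s^{-1} = ses^{-1} = ss^{-1} \neq 0$, so $st \neq 0$. This exhibits $S$ as $G$ with zero adjoined in the sense of the forward construction.

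The main obstacle is the converse direction, and specifically the step of recognising that the semigroup product must collapse to $0$ precisely when the restricted product is undefined. The trick is that Proposition~2.23 reduces the arbitrary product $st$ to a restricted product of elements sitting beneath $s$ and $t$; the hypothesis that the natural partial order is trivial off of zero then leaves no room for these smaller elements except to coincide with $s$ and $t$ themselves or to vanish, which is exactly what pins down the groupoid-with-zero structure.
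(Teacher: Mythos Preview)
Your proof is correct and follows essentially the same route as the paper: both directions match, and in the converse you, like the paper, pivot on the idempotent $e = \dom(s)\ran(t)$ to see that either $\dom(s) = \ran(t)$ or $st = 0$. The only cosmetic difference is that the paper applies the order hypothesis directly to the idempotents $e \leq \dom(s), \ran(t)$, while you route through Proposition~2.23 and apply it to $s' \leq s$ and $t' \leq t$; the conclusions are identical.
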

\begin{proof} If $G$ is a groupoid then $S = G^{0}$, the groupoid $G$ with an adjoined zero, 
is a semigroup when we define all undefined product to be zero. 
It is an inverse semigroup and the natural partial order is equality when restricted to the non-zero elements.

To prove the converse, let $S$ be an inverse semigroup in which the natural partial order is equality when restricted to the set of non-zero elements.
Let $s$ and $t$ be arbitrary elements in $S$.
If $\dom (s) = \ran (t)$ then $st$ is just the restricted product.
Suppose that $\dom (s) \neq \ran (t)$. 
Then  $\dom (s) \ran (t) = 0$. 
It follows that in this case $st = 0$.
Thus the only non-zero products in $S$ are the restricted products
and the result follows.
\end{proof}

\subsection{Presheaves of groups}

The idempotents of an inverse semigroup commute amongst themselves but needn't commute with anything else.
The extreme case where they do is interesting.
An inverse semigroup is said to be {\em Clifford} if its idempotents are central.
Abelian inverse semigroups are Clifford semigroups and play a central role in the cohomology of inverse semigroups.
We show first how to construct examples of Clifford semigroups.

Let $(E,\leq )$ be a meet semilattice, and 
let $\{G_{e} \colon \: e \in E\}$ be a family of disjoint groups indexed 
by the elements of $E$,
the identity of $G_{e}$ being denoted by $1_{e}$.
For each pair $e,f$ of elements of $E$ where $e \geq f$ let
$\phi_{e,f} \colon \: G_{e} \rightarrow G_{f}$ be a group homomorphism,
such that the following two axioms hold:

\begin{description}

\item[{\rm (PG1)}] $\phi_{e,e}$ is the identity homomorphism on $G_{e}$.

\item[{\rm (PG2)}] If $e \geq f \geq g$ then 
$ \phi_{f,g} \phi_{e,f} = \phi_{e,g}$.

\end{description}
We call such a family  
$$(G_{e},\phi_{e,f}) 
= (\{G_{e} \colon \: e \in E \}, \{\phi_{e,f} \colon \: e,f \in E, f \leq e \})$$ 
a {\em presheaf of groups (over the semilattice $E$)}.

\begin{proposition} Let $(G_{e},\phi_{e,f})$ be a presheaf of groups.
Let $S = S(G_{e},\phi_{e,f})$ be the union of the $G_{e}$  
equipped with the product defined by:  
$$xy = \phi_{e,e \wedge f}(x)\phi_{f,e \wedge f}(y),$$             
where $x \in  G_{e}$ and $y \in  G_{f}$. 
With respect to this product, $S$ is a Clifford semigroup.
\end{proposition}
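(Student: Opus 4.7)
The plan is to verify successively that $S$ (i) is a semigroup, (ii) has inverses, (iii) has commuting idempotents (so is inverse by Proposition~2.2), and (iv) has central idempotents. The whole argument hinges on computing everything downstairs inside a single group $G_{e \wedge f \wedge \cdots}$ and then invoking axiom (PG2) to reconcile different parenthesizations.

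For associativity, take $x \in G_e$, $y \in G_f$, $z \in G_g$ and set $h = e \wedge f \wedge g$. The product $(xy)z$ lies in $G_{(e\wedge f)\wedge g} = G_h$, and expanding using that the outer $\phi$ is a group homomorphism gives
\[
(xy)z = \phi_{e\wedge f,h}\!\bigl(\phi_{e,e\wedge f}(x)\phi_{f,e\wedge f}(y)\bigr)\phi_{g,h}(z) = \phi_{e,h}(x)\phi_{f,h}(y)\phi_{g,h}(z),
\]
where the last equality uses that $\phi_{e\wedge f,h}$ is a homomorphism together with (PG2). An identical computation starting from $x(yz)$ yields the same three-factor product in $G_h$, establishing associativity.

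For the inverse-semigroup structure, I would observe that the group identities $1_e$ are the only idempotents: if $x \in G_e$ satisfies $x^2 = x$, then $x^2$ computed by the formula is simply the group square of $x$ in $G_e$ (since $e \wedge e = e$ and $\phi_{e,e}$ is the identity by (PG1)), so $x = 1_e$. Next, $1_e \cdot 1_f = \phi_{e,e\wedge f}(1_e)\phi_{f,e\wedge f}(1_f) = 1_{e\wedge f}$, and symmetrically $1_f \cdot 1_e = 1_{e\wedge f}$, so idempotents commute. To get regularity, for $x \in G_e$ let $x^{-1}$ denote the group inverse of $x$ inside $G_e$; then $x \cdot x^{-1} = 1_e$ and $x \cdot x^{-1} \cdot x = 1_e \cdot x = x$, and similarly $x^{-1} \cdot x \cdot x^{-1} = x^{-1}$. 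Thus $S$ is a regular semigroup with commuting idempotents, hence inverse by Proposition~2.2.

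Finally, for the Clifford property, I would compute for $x \in G_f$ and any idempotent $1_e$:
\[
1_e \cdot x = \phi_{e,e\wedge f}(1_e)\phi_{f,e\wedge f}(x) = 1_{e\wedge f}\phi_{f,e\wedge f}(x) = \phi_{f,e\wedge f}(x),
\]
and the mirror computation gives $x \cdot 1_e = \phi_{f,e\wedge f}(x)$ as well, so every idempotent is central. The only genuinely delicate step is associativity, since one must carefully track the indexing semilattice element at each stage and apply (PG2) in the right spot; once that is done, the remaining verifications are short group-theoretic computations using (PG1).
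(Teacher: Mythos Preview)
Your proof is correct and follows essentially the same route as the paper: associativity via (PG2) reducing to $\phi_{e,h}(x)\phi_{f,h}(y)\phi_{g,h}(z)$, regularity via the group inverse in $G_e$, identification of idempotents with the $1_e$ and their commutativity, and centrality of idempotents by the computation $1_e x = \phi_{f,e\wedge f}(x) = x 1_e$. The only cosmetic difference is the order in which you treat regularity and the identification of idempotents.
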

\begin{proof}
The product is clearly well-defined.  
To prove associativity, let $x \in G_{e}$, $y \in G_{f}$ and $z \in G_{g}$
and put $i = e \wedge f \wedge g$.
By definition
$$(xy)z 
= \phi_{e \wedge f,i}(\phi_{e,e \wedge f}(x) \phi_{f,e \wedge f}(y)) \phi_{g,i}(z).$$
But
$$\phi_{e \wedge f,i} (\phi_{e,e \wedge f}(x) \phi_{f,e \wedge f}(y)) =      
\phi_{e \wedge f,i}(\phi_{e,e \wedge f}(x)) 
\phi_{e \wedge f,i}(\phi_{f,e \wedge f}(y)).$$
By axiom (PG2) this simplifies to
$\phi_{e,i}(x)\phi_{f,i}(y)$.
Thus 
$$(xy)z = \phi_{e,i}(x) \phi_{f,i}(y) \phi_{g,i}(z).$$
A similar argument shows that $x(yz)$
likewise reduces to the right-hand side of the above equation.
Thus $S$ is a semigroup.

Observe that if $x,y \in G_{e}$ then $xy$ is just their product in $G_{e}$.
Thus if $x \in G_{e}$ and $x^{-1}$ is the inverse 
of $x$ in the group $G_{e}$ then  
$$x = xx^{-1}x \mbox{ and }
x^{-1} = x^{-1}xx^{-1}$$
by axiom (PG1).
Thus $S$ is a regular semigroup.

The idempotents of $S$ are just the identities of the groups $G_{e}$,  
again by axiom (PG1)
and $1_e 1_f = 1_{e \wedge f}$.  
Thus the idempotents commute.  
We have thus shown that $S$ is an inverse semigroup.

To finish off, let $x \in G_{f}$. 
Then
$$1_e x 
= \varphi_{e,e \wedge f} (1_e) \varphi_{f,e \wedge f} (x) 
= 1_{e \wedge f} \varphi_{f,e \wedge f} (x) 
= \varphi_{f,e \wedge f} (x),$$
and similarly, $x 1_e = \varphi_{f,e \wedge f} (x)$.  
Consequently, the idempotents of $S$ are central. 
\end{proof}

The underlying groupoid of a Clifford semigroup is just a union of groups as the following lemma shows.

\begin{lemma} Let $S$ be an inverse semigroup.
Then $S$ is Clifford if and only if $s^{-1}s = ss^{-1}$
for every $s \in S$.
\end{lemma}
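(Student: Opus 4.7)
The plan is to prove both directions by direct calculation, exploiting the identity $ss^{-1}s = s$ and the fact that idempotents commute (Proposition~2.1 part already established).

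For the forward direction, assume every idempotent is central. I will evaluate the product $(s^{-1}s)(ss^{-1})$ in two different ways. Using centrality of $s^{-1}s$, I can slide it past $s$: $(s^{-1}s)s = s(s^{-1}s) = (ss^{-1})s = s$, so $(s^{-1}s)(ss^{-1}) = ss^{-1}$. Symmetrically, using centrality of $ss^{-1}$, I get $(ss^{-1})(s^{-1}s) = s^{-1}s$ via $(ss^{-1})s^{-1} = s^{-1}(ss^{-1}) = s^{-1}$. Since idempotents commute in any inverse semigroup, the two products are equal, so $ss^{-1} = s^{-1}s$.

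For the converse, assume $s^{-1}s = ss^{-1}$ for all $s \in S$, and let $e \in E(S)$ and $s \in S$. I want to conclude that $es = se$. The key move is to apply the hypothesis not to $s$ itself but to the element $es$. Using $(es)^{-1} = s^{-1}e$ and $ee = e$, the identity $(es)^{-1}(es) = (es)(es)^{-1}$ unpacks to $s^{-1}es = ess^{-1}e$. Since $e$ and $ss^{-1}$ are both idempotents, they commute, and the right-hand side collapses to $ss^{-1}e$. Invoking the hypothesis once more ($ss^{-1} = s^{-1}s$), this becomes $s^{-1}es = s^{-1}se$. I then left-multiply by $s$: the left-hand side simplifies, again via commutativity of the idempotents $e$ and $ss^{-1}$ together with $(ss^{-1})s = s$, to $es$, while the right-hand side simplifies directly to $se$ using $ss^{-1}s = s$.

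The forward direction is essentially a one-line trick once the product $(s^{-1}s)(ss^{-1})$ is singled out. The substantive step is the converse, where the main obstacle is resisting the temptation to apply Lemma~2.6 (which only yields $es = sf$ for some idempotent $f$, not centrality); the right idea is to apply the strong hypothesis $x^{-1}x = xx^{-1}$ to the composite element $es$ rather than to $s$, thereby generating an equation in which $e$ can be commuted past $ss^{-1}$ and then transported to the correct side.
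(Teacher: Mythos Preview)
Your argument is correct and follows essentially the same route as the paper. For the forward direction the paper phrases the computation via the natural partial order (deducing $ss^{-1}\leq s^{-1}s$ and conversely), but this amounts to exactly your evaluation of $(s^{-1}s)(ss^{-1})$; for the converse the paper likewise applies the hypothesis to $es$ to obtain $s^{-1}es = ss^{-1}e$ and then left-multiplies by $s$, the only cosmetic difference being that you rewrite $ss^{-1}e$ as $s^{-1}se$ before multiplying rather than after.
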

\begin{proof}
Let $S$ be a Clifford semigroup and let $s \in S$.
Since the idempotents are central
$s = s(s^{-1}s) = (s^{-1}s)s$.
Thus $ss^{-1} \leq s^{-1}s$.
We may similarly show that $s^{-1}s \leq ss^{-1}$, 
from which we obtain $s^{-1}s = ss^{-1}$.

Suppose now that $s^{-1}s = ss^{-1}$ for all elements $s$.
Let $e$ be any idempotent and $s$ an arbitrary element.
Then $(es)^{-1}es = es(es)^{-1}$.
That is $s^{-1}es = ss^{-1}e$.
Multiplying on the left by $s$ gives $es = se$, as required.
\end{proof}

We may now characterize Clifford inverse semigroups.

\begin{theorem} 
An inverse semigroup is a Clifford semigroup if and only if it is isomorphic to a presheaf of groups.
\end{theorem}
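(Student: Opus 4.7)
The forward direction is essentially contained in Proposition 3.2: every presheaf of groups produces a Clifford semigroup. So the task is to reconstruct, from an arbitrary Clifford semigroup $S$, a presheaf of groups whose associated semigroup is isomorphic to $S$. My plan is to extract the data intrinsically from $S$.

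First I would set $E = E(S)$ and, for each $e \in E$, define
\[
G_{e} = \{s \in S : s^{-1}s = e\}.
\]
By the preceding lemma, in a Clifford semigroup $s^{-1}s = ss^{-1}$, so $G_{e}$ equally well equals $\{s : ss^{-1} = e\}$, and the sets $G_{e}$ partition $S$. Then I would check that each $G_{e}$ is a group with identity $e$: if $s, t \in G_{e}$ then $(st)^{-1}(st) = t^{-1}s^{-1}st = t^{-1}et = et^{-1}t = e$ (using centrality of $e$), so $G_{e}$ is closed under multiplication; $e$ is a two-sided identity since $es = s^{-1}s\cdot s = s = s\cdot ss^{-1} = se$; and $s^{-1}$ clearly lies in $G_{e}$ and inverts $s$ in $G_{e}$.

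Next, for $e \geq f$, I define $\phi_{e,f} \colon G_{e} \to G_{f}$ by $\phi_{e,f}(s) = sf$. Centrality of $f$ (and $ef = f$) gives $(sf)^{-1}(sf) = fs^{-1}sf = fef = f$, so $\phi_{e,f}$ lands in $G_{f}$. It is a homomorphism: $\phi_{e,f}(st) = stf$, while $(sf)(tf) = sftf = sf^{2}t = sft = stf$, again using centrality. Axiom (PG1) is immediate since $\phi_{e,e}(s) = se = s$, and (PG2) follows from $sfg = sg$ whenever $g \leq f$.

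Finally I must show $S \cong S(G_{e}, \phi_{e,f})$. The underlying sets are the same partition, so it suffices to verify that the product in $S$ equals the product prescribed by the presheaf construction. Given $s \in G_{e}$ and $t \in G_{f}$, a direct centrality computation shows $(st)^{-1}(st) = t^{-1}s^{-1}st = t^{-1}et = ef$, so $st \in G_{ef}$. On the other hand,
\[
\phi_{e,ef}(s)\,\phi_{f,ef}(t) = (s\cdot ef)(t\cdot ef) = st\cdot (ef)^{2} = st\cdot ef = st,
\]
where the middle equality uses that $ef$ is central, and the last uses that $ef$ is the identity of $G_{ef}$. Thus the identity map $S \to S(G_{e},\phi_{e,f})$ is an isomorphism.

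The only mildly delicate step is the last one: keeping track of where each factor lives and using centrality to slide the idempotents past the non-idempotents so that the $S$-product collapses onto the presheaf product. Everything else is routine bookkeeping given the preceding lemma.
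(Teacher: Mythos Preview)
Your proof is correct and follows essentially the same route as the paper's: you invoke Proposition~3.2 for one direction, and for the other you use Lemma~3.3 to set $G_{e}=\{s:s^{-1}s=e\}$, define the structure maps by $\phi_{e,f}(s)=sf$, and verify that the presheaf product agrees with the product in $S$ via centrality of idempotents. The only differences are cosmetic---you supply a few extra verifications (closure of $G_{e}$, that $\phi_{e,f}$ is a homomorphism) that the paper leaves implicit.
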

\begin{proof} Let $S$ be a Clifford semigroup.
By Lemma~3.3, we know that $s^{-1}s = ss^{-1}$ for all element $s$.
This implies that the underlying groupoid of $S$ is a union of groups.
For each idempotent $e \in E(S)$ define
$$G_{e} = \{s \in S \colon \dom (s) = e = \ran (s)    \}.$$
This is a group, the local group at the identity $e$ in the underlying groupoid.
By assumption the union of these groups is the whole of $S$ and each element of $S$ belongs to exactly one of these groups.
If $e \geq f$ define $\phi_{e,f} \colon \: G_{e} \rightarrow G_{f}$
by $\phi_{e,f}(a) = af$.
This is a well-defined function, because $\dom (af) = e$.
We show that $(G_{e},\phi_{e,f})$ is a presheaf of
groups over the semilattice $E(S)$.

Axiom (PG1) holds: let $e \in E(S)$ and $a \in G_{e}$.
Then $\phi_{e,e}(a) = ae = aa^{-1}a = a$. 

Axiom (PG2) holds: let $e \geq f \geq g$ and $a \in G_{e}$.
Then 
$$(\phi_{f,g}\phi_{e,f})(a) = \phi_{f,g}(\phi_{e,f}(a)) 
= afg = ag = \phi_{e,g}(a).$$

Let $T$ be the inverse semigroup constructed from this presheaf of groups. 
Let $a \in G_{e}$ and $b \in G_{f}$.
We calculate their product in this semigroup.
By definition
$$\phi_{e,ef}(a)\phi_{f,ef}(b) = aefbef = afbe = aefb = ab.$$
Thus $S$ and $T$ are isomorphic.

The converse was proved in Proposition~3.2.
\end{proof}

\subsection{Semidirect products of  semilattices by groups}

The group $G$ acts on the set $Y$ (on the left) if there is a function 
$G \times Y \rightarrow Y$ denoted by $(g,e) \mapsto g \cdot e$
satisfying $1 \cdot e = e$ for all $e \in Y$  and
$g \cdot (h \cdot e) = (gh) \cdot e$ for all $g,h \in G$ and $e \in Y$.
If $Y$ is a partially ordered set, 
then we say that $G$ acts on $Y$
by {\em order automorphisms} if for all $e,f \in Y$
we have that 
$$e \leq f \Leftrightarrow g \cdot e \leq g \cdot f.$$
Observe that in the case of a group action, 
it is enough to assume that
$e \leq f$ implies $g \cdot e \leq g \cdot f$, 
because if $g \cdot e \leq g \cdot f$ 
then $g^{-1} \cdot (g \cdot e) \leq g^{-1} \cdot (g \cdot f)$
and so $1 \cdot e \leq 1 \cdot f$, which gives $e \leq f$.
If $Y$ is a meet semilattice on which $G$ acts by order automorphisms,
then it is automatic that 
$$g \cdot (e \wedge f) = g \cdot e \wedge g \cdot f$$
for all $g \in G$ and $e,f \in Y$.   

Let $P(G,Y)$ be the set $Y \times  G$ equipped with the multiplication
$$(e,g)(f,h) = (e \wedge  g \cdot f,gh).$$

\begin{proposition} $P(G,Y)$ is an $E$-unitary inverse semigroup 
in which the semilattice of idempotents is isomorphic to $(Y,\leq)$
and $G$ is isomorphic to the maximum group homomorphic image of $P(G,Y)$.
\end{proposition}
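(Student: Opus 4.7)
The plan is to unpack each of the four claims (inverse semigroup, $E$-unitary, idempotent semilattice $\cong Y$, maximum group image $\cong G$) in turn, using the earlier characterizations whenever possible so as to minimize direct calculation.

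First I would verify associativity of the multiplication $(e,g)(f,h) = (e \wedge g \cdot f, gh)$. Writing out both $((e,g)(f,h))(k,\ell)$ and $(e,g)((f,h)(k,\ell))$, both reduce to $(e \wedge g\cdot f \wedge gh\cdot k,\, gh\ell)$ once one uses that $G$ acts by order automorphisms and therefore preserves meets. Next, an element $(e,g)$ is idempotent iff $g^2 = g$ and $e \wedge g\cdot e = e$, i.e.\ $g = 1$; so $E(P(G,Y)) = \{(e,1) : e \in Y\}$. Two such idempotents multiply by $(e,1)(f,1) = (e \wedge f,1)$, so they commute and the map $e \mapsto (e,1)$ is a semilattice isomorphism $Y \to E(P(G,Y))$. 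For regularity I would produce an explicit inverse: set $(e,g)^* = (g^{-1}\cdot e,\, g^{-1})$ and check both $(e,g)(e,g)^*(e,g) = (e,g)$ and $(e,g)^*(e,g)(e,g)^* = (e,g)^*$ (each reduces to an expression involving $e \wedge e$ after using (PG1)-type axioms for the action). By Proposition~2.2, $P(G,Y)$ is then inverse.

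To prove $E$-unitarity I would compute the natural partial order explicitly. By Lemma~2.9, $(e,g) \leq (f,h)$ iff $(e,g) = (f,h)(k,1)$ for some idempotent $(k,1)$; since $(f,h)(k,1) = (f \wedge h\cdot k, h)$, this forces $g = h$ and $e = f \wedge h\cdot k \leq f$. Conversely, given $e \leq f$ and $g = h$, one can take $k = g^{-1}\cdot e$ to realize $(e,g) \leq (f,g)$. So $(e,g) \leq (f,h)$ iff $g=h$ and $e \leq f$. In particular, if an idempotent $(e,1)$ lies below $(f,h)$ then $h = 1$, so $(f,h)$ is itself an idempotent; hence $P(G,Y)$ is $E$-unitary.

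For the maximum group image I would use the second projection $\pi \colon P(G,Y) \to G$, $(e,g) \mapsto g$. The multiplication rule makes this a surjective homomorphism, so $G$ is a group quotient of $P(G,Y)$ and by Theorem~2.19 it factors through $P(G,Y)/\sigma$; it remains to show $\ker \pi = \sigma$. One inclusion is immediate from the description of $\leq$ above: any two elements $(e,g),(f,g)$ with the same second coordinate have the common lower bound $(e \wedge f, g)$, hence are $\sigma$-related. The other inclusion is forced by the fact that $\sigma$ is a group congruence and so is contained in $\ker \pi$. Therefore $P(G,Y)/\sigma \cong G$.

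The only mildly subtle point is the verification that the action's preservation of meets is exactly what makes the multiplication associative and makes the candidate inverse work; everything else is bookkeeping. I do not expect any genuine obstacle, since the characterizations of inverse (Proposition~2.2), of $\leq$ (Lemma~2.9), and of $\sigma$ (Theorem~2.19) reduce every step to a direct calculation in $P(G,Y)$.
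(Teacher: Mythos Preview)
Your proposal is correct and follows essentially the same route as the paper: identify the inverse as $(g^{-1}\cdot e,g^{-1})$, show $E(P(G,Y))=\{(e,1)\}$ with product $(e\wedge f,1)$, compute the natural partial order as $(e,g)\leq(f,h)\iff e\leq f$ and $g=h$, and read off both $E$-unitarity and the description of $\sigma$ from that. The paper's proof is terser (it simply asserts most of these facts), whereas you spell out associativity, invoke Proposition~2.2 explicitly, and package the $\sigma$-computation via the projection $\pi$ onto $G$, but the substance is identical; your only slips are cosmetic---the characterization of $\leq$ you use is Lemma~2.10 (Example~2.9 intervenes in the numbering), and ``(PG1)-type axioms'' should just be the group-action axioms.
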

\begin{proof} $P(G,Y)$ is an inverse semigroup in which 
the inverse of $(e,g)$ is the element $(g^{-1} \cdot e,g^{-1})$, and 
the idempotents of $P(G,Y)$ are the elements of the form $(e,1)$. 
From the definition of the multiplication in $P(G,Y)$
the function $(e,1) \mapsto e$ is an isomorphism of semilattices.
The natural partial order is given by
$$(e,g) \leq (f,h) \Leftrightarrow e \leq f \mbox{ and } g = h.$$
If $(e,1) \leq (f,g)$ then $g = 1$ and so
$P(G,Y)$ is $E$-unitary.
It also follows from the description of the natural partial order that
$(e,g) \sigma (f,h)$ if and only if $g = h$.
\end{proof}

We may now characterize those inverse semigroups isomorphic to semidirect products of semilattices by groups
using many of the ideas introduced in Section~2 to do so.

\begin{theorem} Let $S$ be an inverse semigroup.  
Then the following are equivalent:  

\begin{enumerate}

\item  The semigroup $S$ is isomorphic to a 
semidirect product of a semilattice by a group.

\item $S$ is $E$-unitary and for each $a \in S$ and $e \in E(S)$ 
there exists $b \in S$ such that $b \sim a$ and $b^{-1}b = e$. 

\item $\sigma^{\natural} \colon \: S \rightarrow S/\sigma$ is star bijective. 

\item There is a star bijective homomorphism from $S$ to a group.

\item The function 
$\theta \colon \: S \rightarrow E(S) \times S/\sigma$ 
defined by $\theta (a) = (a^{-1} a, \sigma (a))$ is a bijection.  

\item The function 
$\phi \colon \: S \rightarrow E(S) \times S/\sigma$
defined by $\phi (a) = (aa^{-1}, \sigma (a))$ is a bijection.

\end{enumerate}
\end{theorem}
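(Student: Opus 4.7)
Here is my plan. I will prove the cycle (1) $\Rightarrow$ (6) $\Leftrightarrow$ (5) $\Leftrightarrow$ (3) $\Leftrightarrow$ (2), with (3) $\Leftrightarrow$ (4) as a side-equivalence, and close with (3) $\Rightarrow$ (1). The workhorses will be Lemma~2.25 (star injective means the kernel is contained in the compatibility relation), Theorem~2.26 ($E$-unitary $\Leftrightarrow \; \sim \, = \, \sigma$), Lemma~2.24 (compatible elements that are $\mathcal{L}$- or $\mathcal{R}$-related coincide), and the explicit description of $P(G, Y)$ from Proposition~3.5.

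For the easy equivalences: (1) $\Rightarrow$ (6) is a direct computation in $P(G, Y)$, since $(e, g)(e, g)^{-1} = (e, 1)$ and $\sigma$ corresponds to projection onto $G$. The equivalence (5) $\Leftrightarrow$ (6) is obtained via the self-bijection $a \mapsto a^{-1}$ of $S$, which swaps $a^{-1}a$ with $aa^{-1}$ and $\sigma(a)$ with $\sigma(a)^{-1}$. For (5) $\Leftrightarrow$ (3): the group $S/\sigma$ has a unique idempotent, so each of its $\mathcal{L}$-classes is all of $S/\sigma$; hence star bijectivity of $\sigma^{\natural}$ says precisely that every pair $(e, g) \in E(S) \times S/\sigma$ has a unique preimage $b$ with $b^{-1}b = e$ and $\sigma(b) = g$, i.e., $\theta$ is bijective. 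For (3) $\Rightarrow$ (4): take $G = S/\sigma$ and $\theta = \sigma^{\natural}$. For (4) $\Rightarrow$ (2): Lemma~2.25 identifies $\theta^{-1}(1_G)$ with $E(S)$, and $e \leq s$ with $e \in E(S)$ gives $\theta(e) = 1_G \leq \theta(s)$, which is equality in $G$, so $s$ is idempotent and $S$ is $E$-unitary; star surjectivity then produces, for any $a$ and $e$, a $b$ with $b^{-1}b = e$ and $\theta(b) = \theta(a)$, whence $b \sim a$ since $\ker\theta \subseteq \, \sim$ by Lemma~2.25.

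For (3) $\Leftrightarrow$ (2): assuming (2), Theorem~2.26 gives $\sim \, = \, \sigma$, so the existence clause of (2) is exactly star surjectivity of $\sigma^{\natural}$, while Lemma~2.24(2) provides star injectivity. Conversely, if (3) holds and $e \leq s$ with $e$ idempotent, then $e = se$ gives $\sigma(s) = 1 = \sigma(s^{-1}s)$ in the group $S/\sigma$; since $s \, \mathcal{L} \, s^{-1}s$, star injectivity forces $s = s^{-1}s$, proving $S$ is $E$-unitary, after which $\sim \, = \, \sigma$ turns star surjectivity into the existence clause of (2).

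The substantive step is (3) $\Rightarrow$ (1). Set $G = S/\sigma$, $Y = E(S)$, and define a $G$-action on $Y$ by letting $g \cdot e$ be the range idempotent of the unique $a$ with $\sigma(a) = g$ and $a^{-1}a = e$ (star bijectivity). The action axioms and order preservation follow by manipulating restricted products; for example, if $e \leq f$ and $a_f$ witnesses $(g, f)$, then $a_f e \leq a_f$ witnesses $(g, e)$ and $g \cdot e = a_f e a_f^{-1} \leq a_f a_f^{-1} = g \cdot f$ by Proposition~2.11(3). The map $\phi \colon S \rightarrow P(G, Y)$, $\phi(a) = (aa^{-1}, \sigma(a))$, is bijective by (6); that it is a homomorphism reduces to the identity
\[ (ab)(ab)^{-1} \; = \; aa^{-1} \wedge \sigma(a) \cdot bb^{-1}, \]
which is the main obstacle. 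For $\leq$, let $c$ realize $\sigma(a) \cdot bb^{-1}$ (so $\sigma(c) = \sigma(a)$ and $c^{-1}c = bb^{-1}$); then $abb^{-1}$ and $c$ are compatible ($E$-unitary, same $\sigma$-image) with $\dom(abb^{-1}) = a^{-1}a \wedge bb^{-1} \leq bb^{-1} = \dom(c)$, so Lemma~2.24(3) yields $abb^{-1} \leq c$ and hence $abb^{-1}a^{-1} \leq cc^{-1}$; combined with $abb^{-1}a^{-1} \leq aa^{-1}$, this settles one direction. For $\geq$, let $h$ be any idempotent below both $aa^{-1}$ and $cc^{-1}$; then $ha$ and $hc$ have the same range $h$ and the same $\sigma$-image, hence agree by Lemma~2.24(2), giving $a^{-1}ha = c^{-1}hc \leq bb^{-1}$, and conjugating back through $a$ yields $h \leq a(bb^{-1})a^{-1} = abb^{-1}a^{-1}$, as required.
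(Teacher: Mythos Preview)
Your proof is correct and follows essentially the same strategy as the paper: both arguments establish a cycle through the six conditions and carry out the substantive work by constructing the $G$-action on $E(S)$ from star bijectivity and then verifying that $\phi$ is a homomorphism. The only notable difference is in that last verification: the paper observes directly that $t \sim a$ forces $tt^{-1}a = at^{-1}t$ (since $at^{-1}$ and $a^{-1}t$ are self-inverse idempotents), giving $tt^{-1}aa^{-1} = abb^{-1}a^{-1}$ in one line, whereas you prove the same identity by establishing both inequalities via two separate applications of Lemma~2.24. Your route is a little longer but equally valid.
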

\begin{proof}
(1) $\Rightarrow $ (2).  
Without loss of generality, we may assume that
$S$ is a semidirect product of a meet semilattice $Y$ by a group $G$.  
The semigroup $S$ is $E$-unitary by Theorem~3.6.  
Let $(e,g) \in S$ and $(f,1) \in E(S)$. 
Then the element $(g \cdot f,g)$ of $S$ satisfies
$$(g \cdot f,g) \sim (e,g)
\mbox{ and }
(g \cdot f,g)^{-1}(g \cdot f,g) = (f,1)$$ 
as required.

(2) $\Rightarrow $ (3).  
Since $S$ is $E$-unitary, the homomorphism
$\sigma^{\natural} \colon \: S \rightarrow S/\sigma$ 
is star injective by Theorem~2.26.
Let $e \in E(S)$ and $\sigma (a) \in S/\sigma$.    
By assumption there exists $b \in S$ such that 
$b^{-1}b = e$ and $b \sim a$.  
But $b \sim a$ implies $\sigma (b) = \sigma (a)$.  
Thus $\sigma^{\natural}$ is also star surjective. 

(3) $\Rightarrow $ (4). Immediate. 

(4) $\Rightarrow $ (3).  
Let $\theta \colon \: S\rightarrow G$ 
be a star bijective homomorphism to a group $G$.  
Since $\sigma$ is the minimum group congruence,
$\sigma \subseteq \mbox{ker}\,\theta$ by Theorem~2.19. 
But $\theta$ is star injective by assumption,
and so $\sigma^{\natural}$ is idempotent pure by Lemma~2.25.  
In particular, $S$ is $E$-unitary by Theorem~2.26.

To show that $\sigma^{\natural}$ is star surjective, 
let $s \in S$ and $e \in E(S)$.  
There exists $t \in S$ such that $t^{-1}t  = e$ and $\theta (t) = \theta(s)$,
since $\theta$ is star surjective.  
Now $\theta (s^{-1}t)$ is the identity of $G$, 
and so $s^{-1}t$ is an idempotent of $S$  
since $\theta$ is star injective.
Similarly, $st^{-1}$ is an idempotent.  
Hence $s \sim t$ and so $(s,t) \in \sigma$. 
Thus for each $e \in E(S)$ and $\sigma (s) \in S/\sigma$,
there exists $t \in S$ such that $t^{-1}t = e$ and $\sigma (t) = \sigma (s)$.
Thus $\sigma^{\natural}$ is star surjective.

(3) $\Rightarrow$ (5). Straightforward. 

(5) $\Rightarrow$ (6).  
Suppose that $\phi (a) = \phi(b)$.  
Then $aa^{-1} = bb^{-1}$ and $\sigma (a) = \sigma (b)$.  
But $\sigma (a^{-1}) = \sigma (b^{-1})$
and so $\theta (a^{-1}) = \theta (b^{-1})$.  
By assumption $\theta$ is bijective and so
$a^{-1} = b^{-1}$, giving $a = b$.  
Hence $\phi$ is injective.

Now let $(e, \sigma (s)) \in E \times S/\sigma$.  
Since $\theta$ is surjective there exists $t \in S$ such that
$\theta (t) = (e, \sigma (s^{-1}))$.  
Thus $t^{-1}t = e$ and $t \, \sigma \, s^{-1}$.  
Hence $t^{-1}$ is such that $t^{-1}\, \sigma \,s$ 
and $t^{-1}(t^{-1})^{-1} = e$.  
Thus $\phi (t^{-1}) = (e, \sigma (s))$, 
and so $\phi$ is surjective.

(6) $\Rightarrow$ (5).  
A similar argument to (5) $\Rightarrow$ (6).

(6) $\Rightarrow $ (1).  
We shall use the fact that both the
functions $\phi$ and $\theta$ defined above are bijections.  

First of all $S$ is $E$-unitary. 
For suppose that $e \leq a$ where $e$ is an idempotent.  
Then $\sigma (e) = \sigma (a)$, 
and $\sigma (e) = \theta (a^{-1}a)$,
so that $\sigma (a) = \sigma (a^{-1}a)$.
Thus $\theta (a) = \theta (a^{-1}a)$,
and so $a = a^{-1}a$, since $\theta$ is a bijection.  

We shall define an action of $S/\sigma$ on $E(S)$ 
using $\theta$, and then show that
$\phi$ defines an isomorphism from the
semidirect product of $E(S)$ by $S/\sigma$ to $S$.  

Define $\sigma (s) \cdot e = tt^{-1}$ where $\theta (t) = (e, \sigma (s))$.  
This is well-defined because $\theta$ is a bijection.
The two defining properties of an action hold.
Firstly, if $\sigma (e)$ is the identity of $S/\sigma$ 
then $\theta (e) = (e, \sigma (e))$ and so $\sigma (e) \cdot e = e$; 
secondly, $\sigma (u) \cdot (\sigma (v) \cdot e) = \sigma (u) \cdot aa^{-1}$ 
where $\theta (a) = (e,\sigma (v))$, 
and $\sigma (u) \cdot aa^{-1} = bb^{-1}$ where
$\theta (b) = (aa^{-1}, \sigma (u))$.  
Now $a \, \sigma \, v$ and $b \, \sigma \, u$ 
so that $ba \, \sigma \, uv$.
Also $a^{-1}a = e$ and $b^{-1}b = aa^{-1}$ so that
$(ba)^{-1}ba = a^{-1}a$.
Hence
$\theta (ba) = (e, \sigma (uv))$.
Thus
$$\sigma (uv) \cdot e = (ba)(ba)^{-1} = bb^{-1}
= \sigma (u) \cdot (\sigma (v) \cdot e).$$
 
Next, we show that $S/\sigma$ acts on $E(S)$ by means of order automorphisms.  
Suppose that $e \leq f$.
Then $\sigma (a) \cdot e = uu^{-1}$ and $\sigma (a) \cdot f = vv^{-1}$ where
$$\theta (u) = (e,\sigma(a)) \mbox{ and } \theta (v) = (f,\sigma (a)).$$
Consequently, $e = u^{-1}u$ and $f = v^{-1}v$
and $u\, \sigma \,v$.
But $S$ is $E$-unitary, and so
$\sigma$ is equal to the compatibility relation 
by Theorem~2.26.
From $u^{-1}u \leq v^{-1}v$ and $u \sim v$
we obtain $u \leq v$ by Lemma~2.24.
Hence $uu^{-1} \leq vv^{-1}$ and so
$\sigma (a) \cdot e \leq \sigma (a) \cdot f$.  

It only remains to prove that $\phi$ is a homomorphism.  
By definition
$$\phi(a) \phi(b) = (aa^{-1},\sigma(a))(bb^{-1},\sigma(b))
= (aa^{-1} \wedge \sigma (a) \cdot bb^{-1},\sigma(ab)).$$
But 
$\sigma (a) \cdot bb^{-1} = tt^{-1}$ 
where $\theta (t) = (bb^{-1},\sigma (a))$.  
Thus
$$\phi (a)\phi(b) = (aa^{-1}tt^{-1}, \sigma (ab))$$
whereas 
$$\phi(ab) = (ab(ab)^{-1},\sigma(ab)).$$
It remains to show that
$aa^{-1}tt^{-1} = ab(ab)^{-1}$.
We know that $t^{-1}t = bb^{-1}$ and $t\, \sigma \, a$. 
But $t \sim a$ since $S$ is $E$-unitary. 
Thus $tt^{-1}a = at^{-1}t = abb^{-1}$ by Lemma~2.16.  
Hence $tt^{-1}aa^{-1} = abb^{-1}a^{-1} = ab(ab)^{-1}$.
\end{proof}

\section{Fundamental inverse semigroups}\setcounter{theorem}{0}

The examples in the last section can be viewed as showing that various natural ways of combining groups and semilattices lead  to interesting classes of inverse semigroups.
But what does the `generic' inverse semigroup look like?
The main goal of this section is to justify the claim made in the Introduction that inverse semigroups should be viewed as 
common generalizations of presheaves of groups and pseudogroups of transformations.
We shall also characterize the congruence-free inverse semigroups with zero.
Interesting examples of such semigroups will be discussed later.

\subsection{The Munn representation}

The symmetric inverse monoid is constructed from an arbitrary set.
We now show how to construct an inverse semigroup from a meet semilattice.
Let $(E,\leq )$ be a meet semilattice, 
and denote by $T_{E}$ be the set of all order isomorphisms between principal order ideals of $E$.  
Clearly, $T_{E}$ is a subset of $I(E)$.
In fact we have the following.

\begin{proposition} The set $T_{E}$ is an inverse subsemigroup of $I(E)$
whose semilattice of idempotents is isomorphic to E.
\end{proposition}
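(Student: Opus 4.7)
The plan is to verify directly that $T_E$ is closed under composition and inversion inside $I(E)$, and then identify the idempotents of $T_E$ with the elements of $E$ via the correspondence $e \mapsto 1_{e^\downarrow}$.

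First I would record the basic fact that, because $E$ is a meet semilattice, $e^\downarrow \cap f^\downarrow = (e \wedge f)^\downarrow$ for any $e,f \in E$. This lets intersections of principal order ideals stay within the class of principal order ideals, which is what the rest of the argument needs. I would also note that an order isomorphism $\alpha \colon e^\downarrow \to f^\downarrow$ necessarily sends $e$ to $f$ (the top element to the top element), and more generally sends any principal order ideal $g^\downarrow \subseteq e^\downarrow$ bijectively onto the principal order ideal $\alpha(g)^\downarrow \subseteq f^\downarrow$.

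Next I would treat closure under composition. Take $\alpha \colon e^\downarrow \to f^\downarrow$ and $\beta \colon g^\downarrow \to h^\downarrow$ in $T_E$. The standard formula in $I(E)$ gives
\[
\dom(\alpha\beta) = \beta^{-1}\bigl(f^\downarrow \cap g^\downarrow\bigr) = \beta^{-1}\bigl((f \wedge g)^\downarrow\bigr),
\]
and by the observation above this equals $\bigl(\beta^{-1}(f \wedge g)\bigr)^\downarrow$, a principal order ideal. Symmetrically, $\ran(\alpha\beta) = \bigl(\alpha(f \wedge g)\bigr)^\downarrow$. A composition of order-preserving bijections restricted to suborders remains an order isomorphism between these principal order ideals, so $\alpha\beta \in T_E$. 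Closure under inverses is immediate, since the inverse of an order isomorphism $e^\downarrow \to f^\downarrow$ is an order isomorphism $f^\downarrow \to e^\downarrow$.

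For the idempotent structure, an idempotent of $T_E$ is an identity map $1_{e^\downarrow}$, so $E(T_E) = \{1_{e^\downarrow} \colon e \in E\}$. The map $\iota \colon E \to E(T_E)$ given by $\iota(e) = 1_{e^\downarrow}$ is a bijection (its inverse sends $1_A$ to the greatest element of $A$) and, using the meet-semilattice identity above, it satisfies
\[
\iota(e)\iota(f) = 1_{e^\downarrow} \cdot 1_{f^\downarrow} = 1_{(e \wedge f)^\downarrow} = \iota(e \wedge f),
\]
so $\iota$ is a semilattice isomorphism.

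I expect the only mildly tricky point to be the domain/range bookkeeping in the composition step — making sure that the preimage of a principal order ideal under an order isomorphism is again principal — but once the initial lemma that $e^\downarrow \cap f^\downarrow = (e \wedge f)^\downarrow$ is in hand, everything else is routine.
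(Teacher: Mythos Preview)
The paper states this proposition without proof, so there is no argument to compare against; your approach is the standard one and is correct in substance.

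There is one bookkeeping slip in the composition step. With $\alpha \colon e^{\downarrow} \to f^{\downarrow}$ and $\beta \colon g^{\downarrow} \to h^{\downarrow}$, and using the paper's convention that $\alpha\beta$ means ``first $\beta$, then $\alpha$'', the correct formula is
\[
\dom(\alpha\beta) \;=\; \beta^{-1}\bigl(\dom\alpha \cap \ran\beta\bigr) \;=\; \beta^{-1}\bigl(e^{\downarrow} \cap h^{\downarrow}\bigr) \;=\; \bigl(\beta^{-1}(e \wedge h)\bigr)^{\downarrow},
\]
and symmetrically $\ran(\alpha\beta) = \bigl(\alpha(e\wedge h)\bigr)^{\downarrow}$. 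Your version intersects $\ran\alpha$ with $\dom\beta$ instead of $\dom\alpha$ with $\ran\beta$; in particular $\alpha(f\wedge g)$ need not even be defined, since $f\wedge g$ has no reason to lie in $e^{\downarrow}$. This does not affect the structure of the argument --- the point is that an intersection of principal order ideals is principal, and its image or preimage under an order isomorphism is again principal --- but the labels should be corrected.
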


$T_{E}$ is called the {\em Munn semigroup} of the semilattice $E$.

\begin{theorem}[Munn representation theorem]  
Let $S$ be an inverse semigroup.  
Then there is an idempotent-separating homomorphism 
$\delta \colon S \rightarrow T_{E(S)}$ 
whose image is a wide inverse subsemigroup of $T_{E(S)}$.
\end{theorem}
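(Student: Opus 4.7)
The plan is to define, for each $s \in S$, the map $\delta_{s} \colon (s^{-1}s)^{\downarrow} \to (ss^{-1})^{\downarrow}$ by $\delta_{s}(e) = ses^{-1}$, and then set $\delta(s) = \delta_{s}$. First I would check that $\delta_{s}$ is well-defined: by Lemma~2.2(3), $ses^{-1}$ is an idempotent, and a direct calculation using the commuting of idempotents shows $ses^{-1} \leq ss^{-1}$. Next, $\delta_{s^{-1}}$ is the obvious candidate for an inverse, and indeed for $e \leq s^{-1}s$ one computes $\delta_{s^{-1}}(\delta_{s}(e)) = s^{-1}(ses^{-1})s = (s^{-1}s)e(s^{-1}s) = e$, and dually on the range; monotonicity in both directions shows $\delta_{s}$ is an order isomorphism, so $\delta_{s} \in T_{E(S)}$.

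The main work is verifying that $\delta$ is a semigroup homomorphism, i.e.\ that $\delta_{st} = \delta_{s}\delta_{t}$ in $I(E(S))$. This splits into a domain check and an action check. The action check is trivial, since $\delta_{s}(\delta_{t}(e)) = s(tet^{-1})s^{-1} = (st)e(st)^{-1}$. For domains, I need the identity
\[
\{e \in E(S) \colon e \leq t^{-1}t \text{ and } tet^{-1} \leq s^{-1}s\} = (t^{-1}s^{-1}st)^{\downarrow}.
\]
The inclusion $\supseteq$ follows by conjugating $e \leq t^{-1}s^{-1}st$ by $t$ on the left and $t^{-1}$ on the right and using that products of idempotents are again idempotents bounded by each factor; the inclusion $\subseteq$ rewrites $e = t^{-1}(tet^{-1})t$ and then replaces $tet^{-1}$ by an element below $s^{-1}s$. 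This is the one step where one has to be careful with the bookkeeping of idempotent inequalities, so I expect it to be the main obstacle, though nothing beyond the rules already collected in Section~2.

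For idempotent-separating, observe that when $e$ is an idempotent, $\delta_{e}$ has domain $e^{\downarrow}$ and acts as $f \mapsto efe = f$ for $f \leq e$, so $\delta_{e} = 1_{e^{\downarrow}}$; if $\delta_{e} = \delta_{f}$ then the domains $e^{\downarrow}$ and $f^{\downarrow}$ coincide, forcing $e = f$. Finally, to show the image is wide, I would use that the idempotents of $T_{E(S)}$ are precisely the identity maps $1_{e^{\downarrow}}$ for $e \in E(S)$ (from Proposition~4.1), and the computation above shows every such $1_{e^{\downarrow}}$ lies in $\delta(S)$; combined with the fact that homomorphisms send idempotents to idempotents (Lemma~2.7(2)), this gives $E(\delta(S)) = E(T_{E(S)})$, which is exactly the wideness condition.
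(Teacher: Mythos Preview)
Your proposal is correct and follows essentially the same route as the paper: the same conjugation map $\delta_{s}(e)=ses^{-1}$, the same inverse $\delta_{s^{-1}}$, the same action check, and the same domain identity $\mathrm{dom}(\delta_{s}\delta_{t})=((st)^{-1}st)^{\downarrow}$ (the paper phrases it as computing $\delta_{t}^{-1}\bigl((s^{-1}stt^{-1})^{\downarrow}\bigr)$, which is just the pullback form of your set description). The idempotent-separating and wideness arguments are likewise identical in spirit; only minor point: the lemma you cite as ``Lemma~2.2(3)'' is Lemma~2.3(3) in the paper's numbering.
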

\begin{proof}
For each $s \in S$  
define the function 
$$\delta _{s} \colon \: (s^{-1}s)^{\downarrow} \rightarrow  (ss^{-1})^{\downarrow}$$ 
by $\delta_{s}(e) = ses^{-1}$.
We first show that $\delta_{s}$ is well-defined.  
Let $e \leq  s^{-1}s$.  
Then $ss^{-1}\delta_{s}(e) = \delta_{s}(e)$, 
and so $\delta_{s}(e) \leq  ss^{-1}$.  
To show that $\delta_{s}$ is order-preserving, let $e \leq f \in (s^{-1}s)^{\downarrow}$.  
Then
$$\delta_{s}(e)\delta_{s}(f) = ses^{-1}sfs^{-1} 
= sefs^{-1} = \delta_{s}(e).$$
Thus $\delta_{s}(e) \leq \delta_{s}(f)$.

Consider now the function
$\delta_{s^{-1}} \colon  (ss^{-1})^{\downarrow} \to (s^{-1}s)^{\downarrow}$.
This is order-preserving by the argument above.  
For each $e \in  (s^{-1}s)^{\downarrow}$, we have that 
$$\delta_{s^{-1}}(\delta_{s}(e)) = \delta_{s^{-1}}(ses^{-1}) 
= s^{-1}ses^{-1}s = e.$$
Similarly, $\delta_{s}(\delta_{s^{-1}}(f)) = f$ for each $f \in  (ss^{-1})^{\downarrow}$. 
Thus $\delta_{s}$ and $\delta_{s^{-1}}$ are mutually inverse, 
and so $\delta_{s}$ is an order isomorphism.

Define $\delta \colon \: S \rightarrow T_{E(S)}$
by $\delta (s) = \delta_{s}$.
To show that $\delta$ is a homomorphism,  
we begin by calculating $\mbox{dom}(\delta_{s} \delta_{t})$
for any $s,t \in S$.
We have that
$$\mbox{dom} (\delta_{s} \delta_{t})  
=  \delta^{-1}_{t} ((s^{-1}s)^{\downarrow} ~ \cap ~ (tt^{-1})^{\downarrow}) 
= \delta^{-1}_{t} ((s^{-1}stt^{-1})^{\downarrow}).$$
But $\delta^{-1}_{t} = \delta_{t^{-1}}$ and so 
$$\mbox{dom} (\delta_{s}  \delta_{t}) 
= ((st)^{-1}st)^{\downarrow} = \mbox{dom}(\delta_{st}).$$
If $e \in \mbox{dom}\,\delta_{st}$ then 
$$\delta_{st}(e) = (st)e(st)^{-1} = s(tet^{-1})s^{-1}
= \delta_{s}(\delta_{t}(e)).$$  
Hence $\delta_{s} \delta_{t} = \delta_{st}$.  

To show that $\delta$ is idempotent-separating, suppose that 
$\delta (e) = \delta (f)$ where $e$ and $f$ are idempotents of $S$.  
Then $\mbox{dom}\,\delta (e) = \mbox{dom}\,\delta (f)$.  
Thus $e = f$.  

The image of $\delta$ is a wide inverse
subsemigroup of $T_{E(S)}$
because every idempotent in $T_{E(S)}$ is of the form
$1_{[e]}$ for some $e \in E(S)$,
and $\delta_{e} = 1_{[e]}$.\end{proof}

The Munn representation should be contrasted with the Wagner-Preston representation:
that was injective whereas this has a non-trivial kernel which we shall now describe.
The kernel of $\delta$ is the congruence $\mu$ defined by $(s,t) \in \mu$ 
if and only if $\dom (s) = \dom (t)$, $\ran (s) = \ran (t)$ and for all idempotents
$e$ such that $e \leq s^{-1}s$ we have that $ses^{-1} = tet^{-1}$.
The definition can be slightly weakened.

\begin{lemma} The congruence $\mu$ is defined by
$$(s,t) \in \mu \Leftrightarrow (\forall e \in E(S)) \,ses^{-1} = tet^{-1}.$$
\end{lemma}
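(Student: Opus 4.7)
The plan is to prove both directions of the equivalence. For the forward direction, I would fix $(s,t) \in \mu$ and an arbitrary idempotent $e$, then reduce to the bounded case $e \le s^{-1}s$ that is already part of the definition. Set $e' = e(s^{-1}s)$. Because idempotents commute and $s(s^{-1}s) = s$, one computes $se's^{-1} = s(s^{-1}s\cdot e)s^{-1} = (ss^{-1}s)es^{-1} = ses^{-1}$; using $s^{-1}s = t^{-1}t$ from the definition of $\mu$, the same manipulation gives $te't^{-1} = tet^{-1}$. Applying the bounded clause of the definition at $e' \le s^{-1}s$ then yields $ses^{-1} = tet^{-1}$.

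The reverse direction is the substantive one: the single identity $ses^{-1} = tet^{-1}$ (for all $e$) must recover all three clauses of the original definition. I would first specialise to $e = s^{-1}s$, which gives $ss^{-1} = ts^{-1}st^{-1}$; the right-hand side is an idempotent of the form $txt^{-1}$, hence bounded above by $tt^{-1}$ by Lemma~2.3(3), so $ss^{-1} \le tt^{-1}$. The symmetric choice $e = t^{-1}t$ gives the reverse inequality, so $ss^{-1} = tt^{-1}$.

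To recover $s^{-1}s = t^{-1}t$, I would conjugate the identity $ss^{-1} = ts^{-1}st^{-1}$ by $t^{-1}$ on the left and $t$ on the right. Using $ss^{-1} = tt^{-1}$, the left side collapses to $t^{-1}(tt^{-1})t = t^{-1}t$; using commutativity of idempotents, the right side becomes $(t^{-1}t)(s^{-1}s)(t^{-1}t) = (t^{-1}t)(s^{-1}s)$. Hence $t^{-1}t = (t^{-1}t)(s^{-1}s)$, i.e.\ $t^{-1}t \le s^{-1}s$, and symmetry yields equality. The third clause of the original definition is then just the hypothesis restricted to $e \le s^{-1}s$, so $(s,t)\in\mu$.

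The only real obstacle is this extraction of $s^{-1}s = t^{-1}t$ in the reverse direction: the unrestricted hypothesis yields only $ses^{-1}$-type information, which is range-side data, and must be converted into domain-side data. The trick of conjugating by $t^{-1}$ and $t$ accomplishes precisely that conversion; all remaining steps rest on the standard identities $s(s^{-1}s) = s$, commutativity of idempotents, and the fact that $ses^{-1} \le ss^{-1}$, each of which is already available.
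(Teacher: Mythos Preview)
Your proof is correct. The forward direction is essentially the same as the paper's: both reduce an arbitrary idempotent $e$ to $e' = e(s^{-1}s) \le s^{-1}s$ via the identities $s(s^{-1}se)s^{-1} = ses^{-1}$ and $t(t^{-1}te)t^{-1} = tet^{-1}$.

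The reverse direction differs. The paper first checks that the relation $\mu'$ defined by the unrestricted condition is itself a congruence; from this it deduces $(s^{-1},t^{-1}) \in \mu'$ and then obtains $s^{-1}s = t^{-1}t$ by rerunning the range argument on inverses. You instead take the equation $ss^{-1} = t(s^{-1}s)t^{-1}$ (already obtained at $e = s^{-1}s$) and conjugate it by $t^{-1},t$ to convert range-side information directly into the domain-side inequality $t^{-1}t \le s^{-1}s$. Your route is a touch more elementary and avoids the detour through the congruence property; the paper's route establishes that $\mu'$ is a congruence along the way, though this is of course redundant once $\mu' = \mu$ is known.
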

\begin{proof} Define $(s,t) \in \mu'$ iff $ses^{-1} = tet^{-1}$ for all idempotents $e$.
We shall prove that $\mu = \mu'$.
Observe first that $\mu'$ is a congruence.
It is clearly an equivalence relation.
Suppose that $(a,b) \in \mu'$ and $(c,d) \in \mu'$.
The proof that $(ac,bd) \in \mu'$ is straightforward.
It follows that from $(s,t) \in \mu'$ we may deduce that $(s^{-1},t^{-1}) \in \mu'$.
Let $(s,t) \in \mu'$.
We prove that $(s,t) \in \mu$.
To do this we need to prove that $\dom (s) = \dom (t)$, $\ran (s) = \ran (t)$.
By choosing our idempotent to be $ss^{-1}$ we get that $ss^{-1} \leq tt^{-1}$.
By symmetry we deduce that $\ran (s) = \ran (t)$.
The fact that $\dom (s) = \dom (t)$ follows from the same argument using the fact that $(s^{-1},t^{-1}) \in \mu'$. 
We have shown that $\mu' \subseteq \mu$.

To prove the converse, suppose that $(s,t) \in \mu$.
Let $e$ be an arbitrary idempotent.
Then $s^{-1}s = t^{-1}t$ and so $s^{-1}se = t^{-1}te$.
Thus $s(s^{-1}se)s^{-1} = t(t^{-1}te)t^{-1}$,
which simplifies to $ses^{-1} = tet^{-1}$.
It follows that $(s,t) \in \mu'$, as required.
\end{proof}

We have defined idempotent-separating homomorphisms and we may likewise define idempotent-separating congruences.

\begin{lemma} 
$\mu$ is the largest idempotent-separating congruence on $S$.
\end{lemma}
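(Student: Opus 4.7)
The plan is to establish two things: first, that $\mu$ is itself idempotent-separating, and second, that any idempotent-separating congruence $\rho$ on $S$ satisfies $\rho \subseteq \mu$.

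For the first part, I would take the original (pre-Lemma~4.4) definition. Suppose $(e,f) \in \mu$ with $e,f \in E(S)$. By definition $\dom(e) = \dom(f)$, that is, $e^{-1}e = f^{-1}f$. Since idempotents are self-inverse, this reads $e = f$. Hence $\mu$ separates the idempotents of $S$.

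For the second part, let $\rho$ be an arbitrary idempotent-separating congruence on $S$, and suppose $(s,t) \in \rho$. I would use the characterization from Lemma~4.4, so it suffices to show that $ses^{-1} = tet^{-1}$ for every $e \in E(S)$. The key auxiliary observation is that $(s^{-1},t^{-1}) \in \rho$; this is a standard fact following from the uniqueness of inverses applied in the inverse semigroup $S/\rho$ (the $\rho$-classes of $s^{-1}$ and $t^{-1}$ are both inverses of the $\rho$-class of $s=t$). With this in hand, for any idempotent $e$ one has $(ses^{-1}, tet^{-1}) \in \rho$, and by Lemma~2.2(3) both $ses^{-1}$ and $tet^{-1}$ are idempotents. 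Since $\rho$ is idempotent-separating, they coincide, which is exactly what is needed to conclude $(s,t) \in \mu$.

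There is really no serious obstacle here: the only non-routine point is the closure of $\rho$ under inverses, and that is immediate from the fact that $S/\rho$ is again an inverse semigroup (with uniquely determined inverses). Everything else is a direct application of the two characterizations of $\mu$ already proved.
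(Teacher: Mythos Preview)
Your proof is correct and follows essentially the same route as the paper. The paper's argument omits your first step because $\mu$ being idempotent-separating is already known (it is the kernel of the idempotent-separating Munn representation from Theorem~4.2); for the second step the paper writes directly $(ses^{-1},tet^{-1}) \in \rho$ without pausing to justify closure under inverses, whereas you make that point explicit, but otherwise the arguments coincide.
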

\begin{proof}
Let $\rho$ be any idempotent separating-congruence on $S$ and let $(s,t) \in \rho$.
Let $e$ be any idempotent.
Then $(ses^{-1},tet^{-1}) \in \rho$
but $\rho$ is idempotent separating and so $ses^{-1} = tet^{-1}$.
It follows that $(s,t) \in \mu$.
Thus we have shown that $\rho \subseteq \mu$. 
\end{proof}

An inverse semigroup is said to be {\em fundamental} if $\mu$ is the equality relation.

\begin{lemma} Let $S$ be an inverse semigroup.
Then $S/\mu$ is fundamental.
\end{lemma}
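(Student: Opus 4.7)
The plan is to apply Lemma 4.3 and Lemma 4.4 to $S/\mu$ itself and reduce the statement to a direct computation using that $\mu$ is idempotent-separating on $S$.

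First I would set up notation. Let $\bar{\mu}$ denote the maximum idempotent-separating congruence on $S/\mu$; by Lemma 4.4 applied to $S/\mu$, showing $S/\mu$ is fundamental is the same as showing $\bar{\mu}$ is the equality relation. So I would take a pair $(\mu(s),\mu(t)) \in \bar{\mu}$ and try to deduce $\mu(s) = \mu(t)$, i.e., $(s,t) \in \mu$.

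The key step is to translate the condition $(\mu(s),\mu(t)) \in \bar{\mu}$ back into something about $S$. Using Lemma 4.3 applied to $S/\mu$, the pair lies in $\bar{\mu}$ iff $\mu(s)\bar{e}\mu(s)^{-1} = \mu(t)\bar{e}\mu(t)^{-1}$ for every idempotent $\bar{e}$ of $S/\mu$. By Lemma 2.7(3), every idempotent of $S/\mu$ is of the form $\mu(e)$ for some $e \in E(S)$, so the condition becomes
\[
\mu(ses^{-1}) = \mu(tet^{-1}) \quad \text{for every } e \in E(S).
\]
Since both $ses^{-1}$ and $tet^{-1}$ are idempotents of $S$ (by Lemma 2.3(3)) and $\mu$ is idempotent-separating, this forces $ses^{-1} = tet^{-1}$ for every $e \in E(S)$. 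By Lemma 4.3 applied in $S$, this means exactly $(s,t) \in \mu$, as required.

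I do not expect any serious obstacle here: the only subtle point is making sure that when lifting idempotents from $S/\mu$ to $S$ one can pick an honest idempotent preimage, but this is precisely the content of Lemma 2.7(3). Everything else is a direct unwinding of the characterization of $\mu$ given in Lemma 4.3 and the maximality proved in Lemma 4.4.
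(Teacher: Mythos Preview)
Your proposal is correct and follows essentially the same route as the paper: take a pair $(\mu(s),\mu(t))$ related by the maximum idempotent-separating congruence on $S/\mu$, lift idempotents via Lemma~2.7(3), and use that $\mu$ is idempotent-separating on $S$ to conclude $ses^{-1}=tet^{-1}$ for all $e\in E(S)$, hence $(s,t)\in\mu$. The only difference is cosmetic---you cite Lemma~4.3 and Lemma~4.4 explicitly where the paper works directly from the definition of $\mu$ and of ``fundamental''.
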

\begin{proof}
Suppose that $\mu (s)$ and $\mu (t)$ are $\mu$-related in $S/\mu$.
Every idempotent in $S/\mu$ is of the form $\mu (e)$ where $e \in E(S)$.
Thus
$$\mu (s)\mu (e)\mu (s)^{-1} = \mu (t)\mu (e) \mu (t)^{-1}$$
so that $\mu (ses^{-1}) = \mu (tet^{-1})$.
But both $ses^{-1}$ and $tet^{-1}$ are idempotents, so that
$ses^{-1} = tet^{-1}$ for every $e \in E(S)$.
Thus $(s,t) \in \mu$.
\end{proof}

\begin{theorem}  Let $S$ be an inverse semigroup.  
Then $S$ is fundamental if, and only if, 
$S$ is isomorphic to a wide inverse subsemigroup 
of the Munn semigroup $T_{E(S)}$.
\end{theorem}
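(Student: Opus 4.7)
The plan is to leverage the Munn representation $\delta \colon S \to T_{E(S)}$ constructed in Theorem~4.2, whose image is already shown to be a wide inverse subsemigroup. The pivotal observation will be that $\ker \delta = \mu$: unwinding definitions, $\delta(s) = \delta(t)$ says that $ses^{-1} = tet^{-1}$ for every $e \leq s^{-1}s$, together with the matching of domain and range idempotents, which is the original definition of $\mu$; by Lemma~4.3 this is equivalent to $ses^{-1} = tet^{-1}$ holding for all $e \in E(S)$.

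With this in hand the forward direction is immediate: if $S$ is fundamental then $\mu$ is the equality relation, so $\delta$ has trivial kernel and is therefore injective. It restricts to an isomorphism from $S$ onto $\delta(S)$, and $\delta(S)$ is a wide inverse subsemigroup of $T_{E(S)}$ by Theorem~4.2.

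For the converse, I would argue in two steps. First, being fundamental is inherited by wide inverse subsemigroups: if $T$ is a wide inverse subsemigroup of a fundamental inverse semigroup $U$ and $(s,t) \in \mu_T$, then by Lemma~4.3 we have $ses^{-1} = tet^{-1}$ for every $e \in E(T) = E(U)$, which places $(s,t) \in \mu_U$ and forces $s = t$. Applying this to an isomorphic copy of $S$ inside $T_{E(S)}$ reduces the problem to showing that $T_{E(S)}$ itself is fundamental.

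The main obstacle is this last claim. Let $\alpha \colon [d_\alpha] \to [r_\alpha]$ and $\beta \colon [d_\beta] \to [r_\beta]$ be order isomorphisms lying in the same $\mu$-class. The idempotents of $T_{E(S)}$ are precisely the maps $1_{[f]}$ for $f \in E(S)$, and a direct computation in $I(E(S))$ gives $\alpha \, 1_{[f]} \, \alpha^{-1} = 1_{[\alpha(f \wedge d_\alpha)]}$. The condition $\alpha \mu \beta$ then forces $\alpha(f \wedge d_\alpha) = \beta(f \wedge d_\beta)$ for every $f \in E(S)$. Specializing to $f = d_\alpha$ and symmetrically $f = d_\beta$ first yields $r_\alpha = r_\beta$, and then, using injectivity of $\beta$ on $[d_\beta]$, gives $d_\alpha = d_\beta$; with this common domain $d$, taking $f \leq d$ yields $\alpha(f) = \beta(f)$ pointwise, so $\alpha = \beta$.
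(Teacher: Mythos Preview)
Your proof is correct and shares the same core computation with the paper---namely, that $\alpha\,1_{[e]}\,\alpha^{-1} = 1_{[\alpha(e)]}$ for $e$ in the domain of $\alpha$, which forces $\mu$-related elements of $T_{E(S)}$ to agree pointwise. The difference is organizational: the paper handles the converse in one pass, taking a wide inverse subsemigroup $S \subseteq T_E$, using the original definition of $\mu$ (so $\mbox{dom}\,\alpha = \mbox{dom}\,\beta$ is given), and directly computing that $(\alpha,\beta) \in \mu_S$ implies $\alpha(e) = \beta(e)$ for each $e \leq \mbox{dom}\,\alpha$. You instead factor the converse into two reusable lemmas---wide inverse subsemigroups inherit fundamentality, and $T_{E(S)}$ is fundamental---and work from the Lemma~4.3 characterization of $\mu$, which obliges you to recover $d_\alpha = d_\beta$ from the conjugation identity. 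Your route is slightly longer at that step but yields the standalone fact that $T_E$ is fundamental; the paper's route is more direct because it exploits the domain equality already built into the definition of $\mu$.
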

\begin{proof}
Let $S$ be a fundamental inverse semigroup. 
By Theorem~4.2, 
there is a homomorphism $\delta \colon \: S \rightarrow T_{E(S)}$
such that $\mbox{ker}\,\delta = \mu$.
By assumption, $\mu$ is the equality congruence, and so $\delta$
is an injective homomorphism.
Thus $S$ is isomorphic to its image in $T_{E(S)}$,
which is a wide inverse subsemigroup.

Conversely, let $S$ be a wide inverse subsemigroup of a Munn
semigroup $T_{E}$.
Clearly, we can assume that $E = E(S)$.
We calculate the maximum idempotent-separating congruence of $S$. 
Let $\alpha,\beta \in S$ and suppose that $(\alpha,\beta) \in \mu$ in $S$.
Then $\mbox{dom}\,\alpha = \mbox{dom}\,\beta$.
Let $e \in \mbox{dom}\,\alpha$.
Then $1_{[e]} \in S$, since $S$ is a wide inverse subsemigroup of $T_{E(S)}$.
By assumption
$\alpha 1_{[e]} \alpha^{-1} = \beta 1_{[e]} \beta^{-1}$.
It is easy to check that 
$1_{[\alpha (e)]} = \alpha 1_{[e]} \alpha^{-1}$
and
$1_{[\beta (e)]} = \beta 1_{[e]}  \beta^{-1}$.
Thus $\alpha (e) = \beta (e)$. 
Hence $\alpha = \beta$, and so $S$ is fundamental.
\end{proof}

In group theory, congruences are handled using normal subgroups, and in ring theory by ideals.
In general semigroup theory, there are no such substructures and so congruences have to be studied in their own right 
something that is common to most of universal algebra.
Even in the case of inverse semigroups, congruences have to be used.
However, idempotent-separating homomorphisms are determined by analogues of normal subgroups. 

Let $\theta \colon S \rightarrow T$ be a homomorphism of inverse semigroups.
The {\em Kernel of $\theta$} is defined to be the set $K$ of all elements of $S$ that map to idempotents under $\theta$.
Observe that $K$ is a wide inverse subsemigroup of $S$ and it is {\em self-conjugate}
in the sense that $s^{-1}Ks \subseteq K$ for all $s \in S$.
We say that $K$ is a a {\em normal inverse subsemigroup} of $S$.

\begin{remark}{\em  
This typographical distinction between {\em kernels} which are congruences and {\em Kernels}
which are substructures is not entirely happy but convenient for the purposes of this section.}
\end{remark}

If $\theta$ is idempotent-separating then its Kernel satisfies an additional property.
If $a \in K$ and if $e$ is any idempotent then $ae = ea$.
This motivates the following definition.

For every inverse semigroup $S$, we define $Z(E(S))$, the {\em centralizer of the idempotents},
to be set of all elements of $S$ which commute with every idempotent.
The centralizer is a normal inverse subsemigroup and is Clifford.
Thus the Kernels of idempotent-separating homomorphisms from $S$ are subsets of the centralizer of the idempotents of $S$.
We now prove that idempotent-separating homomorphisms are determined by their Kernels.

\begin{theorem} Let $S$ be an inverse semigroup.
Let $K$ be a normal inverse subsemigroup of $S$ contained in $Z(E(S))$.
Define the relation $\rho_{K}$ by 
$$(s,t) \in \rho_{K} \Leftrightarrow  st^{-1} \in K \text{ and }\dom (s) = \dom (t).$$
Then $\rho_{K}$ is an idempotent-separating congruence whose associated Kernel is $K$.
\end{theorem}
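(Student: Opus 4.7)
The plan is to break the proof into four stages matching the four claims in the statement: $\rho_K$ is an equivalence relation; it is a congruence; it is idempotent-separating; and its Kernel is $K$.

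For the equivalence properties, I would verify reflexivity using that $K$ is wide (so $ss^{-1} \in E(S) \subseteq K$), symmetry using closure of $K$ under inverses, and transitivity by the computation $(st^{-1})(tu^{-1}) = s(t^{-1}t)u^{-1} = s(s^{-1}s)u^{-1} = su^{-1}$, where $\dom(s) = \dom(t)$ is used to replace $t^{-1}t$ by $s^{-1}s$ and then absorb it.

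For the congruence property I would check right- and left-compatibility separately. Right-compatibility is relatively painless: using the identity $s = (st^{-1})t$ (valid because $s \cdot t^{-1}t = s$), one rewrites $(su)(tu)^{-1} = (st^{-1})(t u u^{-1} t^{-1})$, where the first factor is in $K$ and the second is an idempotent by Lemma~2.2(3), hence in $K$ since $K$ is wide; the domain condition is automatic from $s^{-1}s = t^{-1}t$. Left-compatibility is the delicate point. The element $(us)(ut)^{-1} = u(st^{-1})u^{-1}$ lies in $K$ by self-conjugacy, but proving $\dom(us) = \dom(ut)$, i.e.\ $s^{-1}es = t^{-1}et$ for $e := u^{-1}u$, requires the centralizer hypothesis. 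The plan is to start from $st^{-1} \cdot e = e \cdot st^{-1}$ (since $st^{-1} \in K \subseteq Z(E(S))$ commutes with $e$), multiply on the left by $s^{-1}$ and on the right by $t$, and then absorb the spare idempotents using $s^{-1}s = t^{-1}t$ together with the inequalities $s^{-1}es \leq s^{-1}s$ and $t^{-1}et \leq t^{-1}t$, so that both sides collapse to the desired equality.

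The last two claims are short. Idempotent-separation follows immediately: if $e, f \in E(S)$ and $(e,f) \in \rho_K$ then $\dom(e) = \dom(f)$, but for an idempotent $\dom(e) = e$, whence $e = f$. For the Kernel, an element $s$ maps to an idempotent in $S/\rho_K$ exactly when $(s,f) \in \rho_K$ for some $f \in E(S)$ (by Lemma~2.7), and one may take $f = s^{-1}s$; unfolding $(s, s^{-1}s) \in \rho_K$ gives $s = s(s^{-1}s)^{-1} \in K$ while the domain condition is automatic, so the Kernel is exactly $K$.

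The main obstacle is the left-compatibility domain equality, as it is the only step that genuinely invokes the centralizer hypothesis; this is precisely the structural reason why an idempotent-separating Kernel has to sit inside $Z(E(S))$. The remaining ingredients (wideness, self-conjugacy, and the inverse-subsemigroup structure of $K$) handle the other steps essentially mechanically.
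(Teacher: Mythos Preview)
Your proposal is correct and follows the same overall architecture as the paper (equivalence relation, then right- and left-compatibility, then identification of the Kernel; the paper omits an explicit idempotent-separating verification, which you supply). The one substantive difference is in the hard step, the domain equality $s^{-1}es = t^{-1}et$ for $e = u^{-1}u$ in left-compatibility. The paper proves this by a rather long chain: it expands $a^{-1}ea = (a^{-1}ea)(a^{-1}a)(a^{-1}ea)$, substitutes $a^{-1}a = b^{-1}b$, and then uses the centralizer hypothesis twice (once for $ab^{-1}$ and once for $ab^{-1}e$) through about eight displayed lines. Your route is genuinely shorter: from $(st^{-1})e = e(st^{-1})$ you multiply on the left by $s^{-1}$ and on the right by $t$ to get $(s^{-1}s)(t^{-1}et) = (s^{-1}es)(t^{-1}t)$, and then the absorption $s^{-1}es \leq s^{-1}s = t^{-1}t \geq t^{-1}et$ finishes it in one line. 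This is a real simplification and makes transparent exactly where the centralizer hypothesis enters. Two minor remarks: the idempotent-conjugate fact you cite is Lemma~2.3(3) in the paper's numbering, not 2.2(3); and in the Kernel paragraph the domain condition $\dom(s) = \dom(f)$ actually \emph{forces} $f = s^{-1}s$, so ``one may take'' understates the logic slightly --- but the biconditional you need does follow.
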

\begin{proof}
We show first that $\rho_{K}$ is an equivalence relation.
Reflexivity and symmetry hold because $K$ is a wide inverse subsemigroup of $S$.
To prove transitivity suppose that $(a,b),(b,c) \in \rho_{K}$.
Then 
$ab^{-1},bc^{-1} \in K$
and $\dom (a) = \dom (b) = \dom (c)$.
Observe that
$ab^{-1}bc^{-1} = ac^{-1} \in K$ and $\dom (a) = \dom (c)$.
Hence $(a,c) \in \rho_{K}$.
Next we show that $\rho_{K}$ is a congruence.
Let $(a,b) \in \rho_{K}$ and $c \in S$.
By assumption, $ab^{-1} \in K$ and $\dom (a) = \dom (b)$.
We prove first that $\rho_{K}$ is a right congruence by showing that $(ac,bc) \in \rho_{K}$.
Observe that 
$ac(bc)^{-1} = acc^{-1}b^{-1}$.
We may move the idempotent $cc^{-1}$ through $b^{-1}$ by Lemma~2.6.
Thus by the fact that $K$ is a wide inverse subsemigroup
we have show that $ac(bc)^{-1} \in K$.
A simple calculation shows that $\dom (ac) = \dom (bc)$.
We prove now that $\rho_{K}$ is a left congruence by showing that $(ca,cb) \in \rho_{K}$.
Observe that $ca(cb)^{-1} = c(ab^{-1})c^{-1}$, but $ab^{-1} \in K$ and $K$ is self-conjugate so that $ca(cb)^{-1} \in K$.

It remains to show that the elements
$$(ca)^{-1}ca = a^{-1}c^{-1}ca
\mbox{ and }
(cb)^{-1}cb = b^{-1}c^{-1}cb$$
are equal.
Put $e = c^{-1}c$.
We shall show that $a^{-1}ea = b^{-1}eb$.
Write
$$a^{-1}ea = (a^{-1}ea)(a^{-1}a)(a^{-1}ea).$$
But $a^{-1}a = b^{-1}b$
and so
$$a^{-1}ea = (a^{-1}ea)(b^{-1}b)(a^{-1}ea).$$
Now
$$(a^{-1}ea)(b^{-1}b)(a^{-1}ea) = (a^{-1}e)(ab^{-1})(ab^{-1})^{-1}(ea).$$
But $ab^{-1} \in K$, and $K$ is contained in the centralizer of the idempotents, and so
$$ab^{-1}(ab^{-1})^{-1} = (ab^{-1})^{-1}ab^{-1}.$$
Thus 
$$(a^{-1}e)(ab^{-1})(ab^{-1})^{-1}(ea) 
= 
(a^{-1}e)(ab^{-1})^{-1}(ab^{-1})(ea),$$                
and so
$$a^{-1}ea = (a^{-1}e)(ba^{-1}ab^{-1})(ea).$$
Now
$$(a^{-1}e)(ba^{-1}ab^{-1})(ea) = a^{-1}(ab^{-1}e)^{-1}(ab^{-1}e)a.$$
But $ab^{-1} \in K$, and $K$ is a wide subsemigroup, so that $ab^{-1}e \in K$.
Thus because $K$ is contained in the centralizer of the idempotents we have that
$$a^{-1}(ab^{-1}e)^{-1}(ab^{-1}e)a 
= 
a^{-1}(ab^{-1}e)(ab^{-1}e)^{-1}a.$$
Thus
$$a^{-1}ea = a^{-1}(ab^{-1}e)(ab^{-1}e)^{-1}a.$$
But $a^{-1}(ab^{-1}e)(ab^{-1}e)^{-1}a = a^{-1}ab^{-1}eb$,
so that we in fact have
$$a^{-1}ea = a^{-1}ab^{-1}eb.$$
But then from $a^{-1}a = b^{-1}b$
we obtain 
$a^{-1}ea =  b^{-1}eb$  as required. 

We now calculate the Kernel of $\rho_{K}$.
Let $a$ be in the Kernel of $\rho_{K}$. 
Then there is an idempotent $e \in S$ such that
$(a,e) \in \rho_{K}$.
But then $ae \in K$ and $a^{-1}a = e$.
Thus $a \in K$.
It follows that the Kernel of  $\rho_{K}$ is contained in $K$.
To prove the reverse inclusion, suppose that $a \in K$.
Then $a(a^{-1}a) \in K$ and $a^{-1}a = a^{-1}a$.
Thus $(a,a^{-1}a) \in \rho_{K}$.
Hence $a$ belongs to the Kernel of $\rho_{K}$.
\end{proof}

The following now confirms what we already suspect.

\begin{proposition} 
Let $S$ be an inverse semigroup.
The idempotent-separating congruence determined by $Z(E(S))$ is $\mu$.
\end{proposition}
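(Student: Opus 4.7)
The plan is to prove $\rho_{Z(E(S))} = \mu$ by two inclusions. The easy direction $\rho_{Z(E(S))} \subseteq \mu$ is immediate: by Theorem~4.7 applied with $K = Z(E(S))$, the relation $\rho_{Z(E(S))}$ is an idempotent-separating congruence on $S$, and Lemma~4.4 says that $\mu$ is the largest such, so the inclusion follows.

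For the reverse inclusion, let $(s,t) \in \mu$; I must verify both conditions defining $\rho_{Z(E(S))}$, namely $\dom(s) = \dom(t)$ and $st^{-1} \in Z(E(S))$. The domain equality (and the range equality $ss^{-1} = tt^{-1}$) is built into the original definition of $\mu$ from the proof of Theorem~4.2 (and in any case is recovered in the proof of Lemma~4.3). The content is therefore in showing $st^{-1} \in Z(E(S))$. The key step I plan to exploit is to first prove that $st^{-1}$ is $\mu$-related to the idempotent $ss^{-1}$: for an arbitrary idempotent $f$,
\[
(st^{-1})f(st^{-1})^{-1} = st^{-1}fts^{-1} = s(t^{-1}ft)s^{-1},
\]
and since $t^{-1}ft$ is an idempotent, the hypothesis $(s,t) \in \mu$ in the Lemma~4.3 form rewrites this as $t(t^{-1}ft)t^{-1} = tt^{-1}f = ss^{-1}f$, which matches $(ss^{-1})f(ss^{-1})^{-1}$.

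It then remains to show that any element $a$ satisfying $(a,e) \in \mu$ for some idempotent $e$ lies in $Z(E(S))$. The domain/range content of $(a,e) \in \mu$ forces $a^{-1}a = aa^{-1} = e$, so $ae = ea = a$. For any idempotent $f$, the relation gives $afa^{-1} = efe = ef$; right-multiplying by $a$ and using $a^{-1}a = e$ produces $afe = efa$, and the identities $ae = ea = a$ together with $ef = fe$ collapse these to $af$ and $fa$ respectively, so $af = fa$. Applied with $a = st^{-1}$ and $e = ss^{-1}$, this gives $st^{-1} \in Z(E(S))$, hence $(s,t) \in \rho_{Z(E(S))}$. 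The main obstacle is purely bookkeeping in these idempotent-commutation computations; no new conceptual input beyond Lemmas~2.6, 4.3, 4.4 and Theorem~4.7 is required.
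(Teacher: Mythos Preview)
Your argument is correct and follows essentially the same route as the paper: both proofs hinge on showing that any element $\mu$-related to an idempotent lies in $Z(E(S))$, and both carry out the same idempotent-commutation calculation (the paper writes it as $sfs^{-1} = fss^{-1}$ obtained via idempotent-separation, you write it as $afa^{-1} = ef$ obtained via the Lemma~4.3 description, but the algebra is the same). The only organizational difference is that the paper phrases the proof as computing $\mathrm{Ker}\,\mu = Z(E(S))$ and leaves the passage from equal Kernels to equal congruences implicit, whereas you spell out both inclusions $\rho_{Z(E(S))} \subseteq \mu$ and $\mu \subseteq \rho_{Z(E(S))}$ explicitly; your intermediate step $(st^{-1})\,\mu\,(ss^{-1})$ is exactly what bridges the two formulations. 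One minor correction: the result you cite as ``Theorem~4.7'' is numbered Theorem~4.8 in the paper (4.7 is a Remark).
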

\begin{proof} We calculate the Kernel of $\mu$.
Suppose that $s \mu e$ where $e$ is an idempotent.
Let $f$ be an arbitrary idempotent.
Then $sfs^{-1} \mu ef$ and $fss^{-1} \mu ef$.
Thus $sfs^{-1} \mu fss^{-1}$ and so $sfs^{-1} = fss^{-1}$.
It follows that $sf = fs$ and $s \in Z(E(S))$.
Conversely, let $s \in Z(E(S))$.
Then $s \mu ss^{-1}$. 
\end{proof}

The following result provides a useful criterion for a semigroup to be fundamental.

\begin{proposition} Let $S$ be an inverse semigroup.
Then $S$ is fundamental if, and only if, $Z(E(S)) = E(S)$.
\end{proposition}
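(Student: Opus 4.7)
The plan is to reduce everything to Proposition~4.9, which identifies $\mu$ with the idempotent-separating congruence $\rho_{K}$ determined by the normal inverse subsemigroup $K = Z(E(S))$. Once we know this, $S$ is fundamental iff $\mu$ is the equality relation, and the question becomes: when is $\rho_{Z(E(S))}$ trivial?

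For the forward implication, assume $S$ is fundamental, so $\mu$ is equality. The Kernel of the equality congruence is precisely $E(S)$, since the only elements $\mu$-related to an idempotent are idempotents themselves. By Proposition~4.9 the Kernel of $\mu$ is $Z(E(S))$, so $Z(E(S)) = E(S)$. The other inclusion $E(S) \subseteq Z(E(S))$ always holds since idempotents commute.

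For the converse, suppose $Z(E(S)) = E(S)$ and take $(s,t) \in \mu$. By the description of $\mu = \rho_{Z(E(S))}$ from Theorem~4.7, we have $st^{-1} \in Z(E(S)) = E(S)$ and $\dom(s) = \dom(t)$. Since $\mu$ is a congruence on an inverse semigroup, the induced map to $S/\mu$ preserves inverses by Lemma~2.7(1), so $(s^{-1},t^{-1}) \in \mu$ as well, which gives $s^{-1}t \in E(S)$. Thus both $st^{-1}$ and $s^{-1}t$ are idempotents, meaning $s \sim t$, while simultaneously $s \,\mathcal{L}\, t$. Lemma~2.24(2) then forces $s = t$, so $\mu$ is equality and $S$ is fundamental.

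The argument is essentially bookkeeping once Proposition~4.9 is in hand; the only mildly subtle point is remembering to invoke the congruence on inverses to get $s^{-1}t \in E(S)$ as well as $st^{-1} \in E(S)$, so that the full compatibility relation applies and Lemma~2.24(2) can be used.
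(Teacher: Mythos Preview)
Your argument is correct and follows the same overall strategy as the paper: both directions go through Proposition~4.9 identifying $\mathrm{Ker}\,\mu$ with $Z(E(S))$. One small citation slip: the result describing $\rho_K$ is Theorem~4.8, not~4.7 (the latter is a Remark).

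The only real difference is in the endgame of the converse. You extract \emph{two} idempotents, $st^{-1}$ and $s^{-1}t$, by passing to inverses, obtain $s\sim t$ together with $s\,\mathcal{L}\,t$, and then invoke Lemma~2.24(2). The paper instead stays with the single relation $(ab^{-1},bb^{-1})\in\mu$: since $ab^{-1}\in Z(E(S))=E(S)$ and $\mu$ is idempotent-separating, it concludes $ab^{-1}=bb^{-1}$ outright, whence $a=ab^{-1}b=b$ using $a^{-1}a=b^{-1}b$. The paper's route is marginally shorter because it exploits idempotent-separation directly rather than detouring through the compatibility relation, but your approach is perfectly valid and arguably makes the link with the general machinery of \S2 more visible.
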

\begin{proof}
Suppose that $S$ is fundamental.
Let $a \in Z(E(S))$.
By Proposition~4.9, $\mbox{Ker}\,\mu = Z(E(S))$.
Thus $(a,e) \in \mu$ for some $e \in E(S)$.
But then $a = e$, since $\mu$ is equality,
and so $a$ is an idempotent.
Thus $Z(E(S)) = E(S)$.

Conversely, suppose that $Z(E(S)) = E(S)$.
Let $(a,b) \in \mu$.
Then 
$$(ab^{-1},bb^{-1}) \in \mu,$$ 
and so $ab^{-1} \in \mbox{Ker}\, \mu$. 
But $\mbox{Ker}\, \mu = Z(E(S))$ by Proposition~4.9, 
and so $ab^{-1} \in Z(E(S))$. 
Thus $ab^{-1}$ is an idempotent, by assumption.
But then $ab^{-1} = bb^{-1}$ since $\mu$ is idempotent-separating,
which gives $ab^{-1}b = b$. 
But $\dom (a) = \dom (b)$ and so $a = b$.
\end{proof}

A topological space $X$ is said to be $T_{0}$\index{$T_{0}$-space} 
if for each pair of elements $x,y \in X$ there exists 
an open set which contains one but not both of $x$ and $y$. 
A {\em base} for a topological space 
is a set of open sets $\beta$ such that every open set of the topology
is a union of elements of $\beta$.
Let $X$ be an arbitrary set and $\beta$ a set of subsets of
$X$ whose union is $X$
and with the property that the intersection of any two elements
of $\beta$ is a union of elements of $\beta$. 
Then a topology can be defined on $X$ by defining 
the open sets to be the unions of elements of $\beta$.

As in Example~2.9, the inverse semigroup of all homeomorphisms between open subsets of $X$ is denoted by $\Gamma (X)$.
An inverse subsemigroup $S$ of $\Gamma (X)$ is said to be {\em topologically complete}  if the set-theoretic domains of the elements of $S$ form a base for the topology.

\begin{theorem} An inverse semigroup is fundamental if, and
only if, it is isomorphic to a topologically complete inverse
semigroup on a $T_{0}$-space.
\end{theorem}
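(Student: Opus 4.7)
The plan is to prove each direction separately, using Proposition~4.11 for the ``if'' direction and the Munn representation (Theorem~4.2) for the ``only if'' direction.

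For the ``if'' direction, suppose $S$ is a topologically complete inverse subsemigroup of $\Gamma(X)$ with $X$ a $T_0$-space. I would verify $Z(E(S))=E(S)$ and apply Proposition~4.11. Let $\alpha \in Z(E(S))$. For any $f \in S$ with set-theoretic domain $V$, the partial identity $1_V = f^{-1}f$ lies in $E(S)$, so $\alpha 1_V = 1_V \alpha$ as partial bijections of $X$. Comparing their set-theoretic domains gives $V \cap \mathrm{dom}\,\alpha = \alpha^{-1}(V) \cap \mathrm{dom}\,\alpha$; equivalently, for every $x \in \mathrm{dom}\,\alpha$ one has $x \in V$ iff $\alpha(x) \in V$. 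Since such $V$ form a base and $X$ is $T_0$, they separate the points of $X$, which forces $\alpha(x) = x$ throughout $\mathrm{dom}\,\alpha$. Hence $\alpha = 1_{\mathrm{dom}\,\alpha}$ is an idempotent, and $S$ is fundamental.

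For the ``only if'' direction, assume $S$ is fundamental. By Theorem~4.2 the Munn homomorphism $\delta \colon S \to T_{E(S)}$ is idempotent-separating with image a wide inverse subsemigroup of $T_{E(S)}$, and since $\mu$ is the equality relation, $\delta$ is injective. Take $X = E(S)$ equipped with the topology generated by the principal order ideals $\beta = \{e^{\downarrow} : e \in E(S)\}$ as a base; this is legitimate because $e^{\downarrow} \cap f^{\downarrow} = (ef)^{\downarrow} \in \beta$ and $\bigcup \beta = E(S)$. The open sets are then the down-sets of $(E(S),\leq)$, and this topology is $T_0$: for $e \neq f$, antisymmetry forces $e \not\leq f$ or $f \not\leq e$, and the corresponding principal ideal contains one but not the other. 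By Theorem~4.2, each $\delta_s$ is an order isomorphism from $(s^{-1}s)^{\downarrow}$ onto $(ss^{-1})^{\downarrow}$ with $\delta_s(e^{\downarrow}) = (ses^{-1})^{\downarrow}$, so both $\delta_s$ and $\delta_{s^{-1}}$ are open, making $\delta_s$ a homeomorphism and thus an element of $\Gamma(X)$. Finally, $\{\mathrm{dom}\,\delta_s : s \in S\} = \{e^{\downarrow} : e \in E(S)\} = \beta$, which is the chosen base, so $\delta(S)$ is topologically complete.

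The main obstacle is the ``if'' direction, where a pointwise identity $\alpha(x) = x$ has to be extracted from a purely algebraic commutativity hypothesis. The crucial leverage comes from two places: topological completeness guarantees that the partial identity $1_V$ of each basic open $V$ is actually an element of $E(S)$ (and not merely of $E(\Gamma(X))$), and the $T_0$ axiom then upgrades the set-theoretic equality of domains into genuine pointwise agreement. In the ``only if'' direction the conceptual choice is to equip $E(S)$ with its Alexandrov (down-set) topology; once that is fixed, the Munn representation supplies the desired embedding with essentially no further work.
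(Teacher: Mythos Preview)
Your proof is correct and follows essentially the same route as the paper's: the ``only if'' direction endows $E(S)$ with the down-set topology generated by the principal order ideals and uses the Munn representation (the paper invokes Theorem~4.6 rather than Theorem~4.2 directly, but that is the same thing), and the ``if'' direction is the contrapositive of the paper's argument via the centralizer criterion (Proposition~4.10 in the paper's numbering). Your domain comparison $V \cap \mathrm{dom}\,\alpha = \alpha^{-1}(V)\cap\mathrm{dom}\,\alpha$ together with the $T_0$ hypothesis is exactly the content of the paper's case analysis on whether $\phi(x)\in B$ or $x\in B$.
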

\begin{proof}
Let $S$ be a fundamental inverse semigroup.   
We can assume by Theorem~4.6, 
that $S$ is a wide inverse subsemigroup of
a Munn semigroup $T_{E}$.
Put $\beta = \{e^{\downarrow} \colon \:  e \in E \}$.
Clearly, $E$ is the union of the elements of $\beta$,
and $\beta$ is closed under finite intersections. 
Thus $\beta$ is the base of a topology on the set $E$.
With respect to this topology, each element of $S$ is a homeomorphism between open subsets of $E$.
It remains to show that this topology is $T_{0}$.
Let $e,f \in E$ be distinct idempotents.
If $f \leq e$ then $f^{\downarrow}$ is an open set containing $f$ but not $e$.
If $f \not \leq e$ then $e^{\downarrow}$ is an open set containing $e$ but not $f$.
Thus the topology is $T_{0}$.

Conversely, let $S$ be a topologically complete 
inverse subsemigroup of the inverse semigroup $\Gamma (X)$ 
where the topology is $T_{0}$ and 
$\beta = \{\mbox{dom}\,\alpha \colon \:  \alpha \in S \}$
is a base for $\tau$.
We shall prove that $S$ is fundamental by showing that the centralizer
of the idempotents of $S$ contains only idempotents (Proposition~4.10).
Let $\phi \in S\setminus E(S)$.
Then there exists $x \in \mbox{dom}\,\phi$ such that $\phi (x) \neq x$,
because $\phi$ is not an idempotent.
Since $\tau$ is $T_{0}$, there exists an open set $U$ such that
$$\mbox{either } 
(\phi (x) \in U \mbox{ and } x \notin U)
\mbox{ or } 
(\phi (x) \notin U \mbox{ and } x \in U).$$
Since $\beta$ is a basis for the topology,
$U = \bigcup B_{i}$ for some $B_{i} \in \beta$.
It follows that there is a $B = B_{i} \in \beta$
such that
$$\mbox{either }
(\phi (x) \in B \mbox{ and } x \notin B)
\mbox{ or } 
(\phi (x) \notin B \mbox{ and } x \in B).$$
Observe that $1_{B} \in S$ since $B = \mbox{dom}\,\alpha$
for some $\alpha \in S$.
Thus the elements
$\phi 1_{B}$ and $1_{B} \phi$  belong to $S$.
In the first case, 
$\phi (x) \in B$ and $x \notin B$,
so that whereas $(\phi 1_{B})(x)$ is not defined, 
$(1_{B} \phi)(x)$ is defined.
Thus $\phi \notin Z(E(S))$.
In the second case, 
$(\phi 1_{B})(x)$ is defined 
and $(1_{B} \phi)(x)$ is not defined.
Thus once again $\phi \notin Z(E(S))$.
Hence in either case $\phi \notin Z(E(S))$.
\end{proof}


Let $S$ be an arbitrary inverse semigroup, 
let its image under the Munn representation be $T$,
and let $K$ be the centralizer of the idempotents of $S$.
Then $S$ is an extension of $K$ by $T$ where
the former is a presheaf of groups and the latter is a pseudogroup of transformations.

\begin{theorem}
Every inverse semigroup is an idempotent-separating extension of a presheaf of groups by a pseudogroup of transformations.
\end{theorem}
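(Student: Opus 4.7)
The plan is to assemble the theorem from the three main results already proved: the Munn representation (Theorem~4.2), the characterization of Clifford semigroups as presheaves of groups (Theorem~3.4), and the topological characterization of fundamental inverse semigroups (Theorem~4.11). The Munn representation does essentially all of the structural work; what remains is to recognize its image as a pseudogroup of transformations and its Kernel as a presheaf of groups.

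Given an arbitrary inverse semigroup $S$, I would start by forming the Munn homomorphism $\delta \colon S \rightarrow T_{E(S)}$ of Theorem~4.2. This $\delta$ is idempotent-separating and its image $T = \delta(S)$ is a wide inverse subsemigroup of $T_{E(S)}$. Because $\ker \delta = \mu$ by Lemma~4.3, the image $T$ is isomorphic to $S/\mu$, which is fundamental by Lemma~4.5. Applying Theorem~4.11 to $T$ then exhibits $T$ as a topologically complete inverse semigroup of homeomorphisms between open subsets of a $T_{0}$-space, i.e.\ an inverse subsemigroup of some $\Gamma(X)$; this is a pseudogroup of transformations in the sense of Example~2.9.

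For the Kernel side, let $K$ be the Kernel of $\delta$, namely the set of elements mapped to idempotents. Since $\delta$ is idempotent-separating, every element of $K$ commutes with every idempotent of $S$, so $K \subseteq Z(E(S))$; in fact the discussion preceding Theorem~4.8 together with Proposition~4.9 identifies $K$ with $Z(E(S))$. The centralizer $Z(E(S))$ is a Clifford inverse subsemigroup of $S$, and by Theorem~3.4 every Clifford semigroup is isomorphic to a presheaf of groups over its semilattice of idempotents. Therefore $K$ is (isomorphic to) a presheaf of groups.

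Putting these pieces together, $S$ sits between its Kernel $K$ (a presheaf of groups) and its image $T$ under $\delta$ (a pseudogroup of transformations), with $\delta$ itself an idempotent-separating homomorphism whose Kernel recovers $K$ exactly. This is precisely what is meant by saying that $S$ is an idempotent-separating extension of a presheaf of groups by a pseudogroup of transformations. The only genuinely delicate point is bookkeeping the two senses of ``kernel''—the congruence $\ker \delta = \mu$ on the one hand and the normal inverse subsemigroup $\mathrm{Ker}\,\delta = Z(E(S))$ on the other—and checking that the latter really is Clifford; both have already been handled in Lemma~4.3, Proposition~4.9, and the remarks around Theorem~4.8, so no new computation is required.
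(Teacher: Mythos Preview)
Your proposal is correct and follows essentially the same approach as the paper. The paper's own argument is just the paragraph immediately preceding the theorem: it takes $T$ to be the image of $S$ under the Munn representation and $K = Z(E(S))$, then observes that $K$ is a presheaf of groups and $T$ a pseudogroup of transformations; you have supplied the precise cross-references (Theorems~3.4 and~4.11, Proposition~4.9, Lemma~4.5) that the paper leaves implicit.
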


\subsection{Congruence-free inverse semigroups}

A useful application of fundamental inverse semigroups is in characterizing those semigroups which are congruence-free.
I shall concentrate only on the case of inverse semigroups with zero.
Douglas Munn once remarked to me that this was one of the few instances where the theory for inverse semigroups with zero was easier than it was for the one without.
We shall need a sequence of definitions before we can state our main result.

Although ideals are useful in semigroup theory, the connection between ideals and congruences is weaker for semigroups than it is for rings.
If $\rho$ is a congruence on a semigroup with zero $S$, 
then the set $I = \rho (0)$ is an ideal of $S$;  however, examples show that the congruence is not determined by this ideal.
Nevertheless, ideals can be used to construct some congruences on semigroups.
Let $I$ be an ideal in the semigroup $S$.
Define a relation $\rho_{I}$ on $S$ by:
$$(s,t) \in \rho_{I} \Leftrightarrow \mbox{either } s,t \in I 
\mbox{ or } s = t.$$
Then $\rho_{I}$ is a congruence.
The quotient semigroup $S/\rho_{I}$ is isomorphic to the set
$S \setminus I \cup \{ 0\}$ 
(we may assume that $0 \notin S \setminus I$) 
equipped with the following product: 
if $s,t \in S\setminus I$ then their product is $st$ if 
$st \in S \setminus I$, all other products are defined to be $0$.
Such quotients are called {\em Rees quotients}.

There is also a way of constructing congruences from subsets.
Let $S$ be a semigroup and let $L \subseteq S$.
Define a relation $\rho_{L}$ on $S$ by:
$$(s,t) \in \rho_{L} \Leftrightarrow 
(\forall a,b \in S)(asb \in L \Leftrightarrow atb \in L).$$
Then $\rho_{L}$ is a congruence on $S$, called the {\em syntactic congruence} of $L$.

An inverse semigroup with zero $S$ is said to be {\em $0$-simple}
if it contains at least one non-zero element and the only ideals are $\{0\}$ and $S$.
An inverse semigroup is said to be {\em congruence-free} if its only congruences are equality and the universal congruence.
Thus congruence-free-ness is much stronger than $0$-simplicity.
A congruence $\rho$ is said to be {\em $0$-restricted} if the $\rho$-class containing $0$ is just $0$.
Finally, define $\xi$ to be the syntactic congruence of the subset $\{ 0\}$.

\begin{lemma} 
The congruence $\xi$ is the maximum $0$-restricted congruence.
\end{lemma}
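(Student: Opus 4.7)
The plan is to split the lemma into two checks: (i) that $\xi$ is itself $0$-restricted, and (ii) that every $0$-restricted congruence $\rho$ is contained in $\xi$. Together these imply that $\xi$ is the largest $0$-restricted congruence. The congruence property of $\xi$ was already asserted when $\xi$ was introduced, so I will take it for granted.

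For (ii) the argument is very short and should come first in the exposition. Let $\rho$ be any $0$-restricted congruence and suppose $(s,t) \in \rho$. For arbitrary $a,b \in S$ one has $(asb, atb) \in \rho$ since $\rho$ is a congruence. If $asb = 0$, then $(0, atb) \in \rho$; but the $\rho$-class of $0$ is $\{0\}$ by $0$-restrictedness, so $atb = 0$. By symmetry the reverse implication holds, so $(s,t) \in \xi$. This shows $\rho \subseteq \xi$.

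For (i) I need to show that if $(s,0) \in \xi$, then $s = 0$. By definition of $\xi$, $(s,0) \in \xi$ means $asb = 0 \Leftrightarrow a 0 b = 0$ for all $a,b \in S$; since the right-hand side always holds, this forces $asb = 0$ for every pair $a,b \in S$. The trick is to choose $a$ and $b$ so that $asb = s$; the natural choice is $a = ss^{-1}$ and $b = s^{-1}s$, which gives $asb = ss^{-1}\,s\,s^{-1}s = s$ by the inverse identities. Hence $s = 0$, and $\xi$ is $0$-restricted.

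The only mildly subtle point is the identity $ss^{-1} \cdot s \cdot s^{-1}s = s$ used in (i); without it one would need to appeal to an identity element, which $S$ need not have. Everything else is routine from the definitions, so I do not anticipate any genuine obstacle.
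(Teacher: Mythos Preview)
Your proof is correct, and part~(ii) is exactly the argument the paper gives (modulo a couple of evident typos there, where $\rho$ is written as $\xi$ and $s,t$ as $a,b$). The paper does not actually verify part~(i), that $\xi$ itself is $0$-restricted; your check via $a = ss^{-1}$, $b = s^{-1}s$ is the natural way to do this in a semigroup without identity and makes the argument complete rather than merely showing that $\xi$ contains every $0$-restricted congruence.
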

\begin{proof}
Let $\rho$ be a $0$-restricted congruence on $S$ and let $s \rho t$.
Suppose that $asb = 0$.
But $asb \xi atb$ and so since $\rho$ is $0$-restricted, we have that $atb = 0$.
By symmetry we deduce that $a \xi b$.
Thus $\rho \subseteq \xi$, as required.
\end{proof}

\begin{lemma} Let $S$ be an inverse semigroup with zero.
\begin{enumerate}

\item $\mu \subseteq \xi$.

\item The congruence $\xi$ restricted to $E(S)$ is the syntactic congruence determined by zero on $E(S)$.

\end{enumerate}
\end{lemma}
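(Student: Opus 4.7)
The plan is to reduce both statements to routine manipulations by observing that, for any $a,b,s \in S$, the range idempotent of $asb$ is $(asb)(asb)^{-1} = a(sbb^{-1}s^{-1})a^{-1}$, and that an element $x$ of an inverse semigroup is $0$ if and only if $xx^{-1} = 0$. Combined with the commutativity of idempotents and the little identity $axa^{-1} = 0 \Leftrightarrow a^{-1}ax = 0$ (valid whenever $x$ is an idempotent), this is all that is needed.

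For part (1), I would start from $(s,t) \in \mu$ and invoke Lemma~4.3 to get $ses^{-1} = tet^{-1}$ for every idempotent $e$. Applied with $e = bb^{-1}$ for arbitrary $a,b \in S$, this yields
$$(asb)(asb)^{-1} = a(sbb^{-1}s^{-1})a^{-1} = a(tbb^{-1}t^{-1})a^{-1} = (atb)(atb)^{-1},$$
so that $asb$ and $atb$ have the same range idempotent; hence $asb = 0 \Leftrightarrow atb = 0$ and $(s,t) \in \xi$.

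For part (2), let $\eta$ denote the syntactic congruence of $\{0\}$ on $E(S)$, so $(e,f) \in \eta$ iff $geh = 0 \Leftrightarrow gfh = 0$ for every $g,h \in E(S)$. The inclusion $\xi|_{E(S)} \subseteq \eta$ is immediate from the definitions. For the reverse, the main step is the equivalence
$$aeb = 0 \; \Leftrightarrow \; (a^{-1}a)(bb^{-1})e = 0,$$
valid for arbitrary $a,b \in S$ and idempotent $e$. One obtains it by computing $(aeb)(aeb)^{-1} = a(ebb^{-1}e)a^{-1} = a(bb^{-1}e)a^{-1}$ using commutativity of idempotents, and then applying the $axa^{-1} = 0$ reduction with $x = bb^{-1}e$. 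Given $(e,f) \in \eta$, set $g = a^{-1}a$ and $h = bb^{-1}$; then $aeb = 0 \Leftrightarrow geh = 0$ and $afb = 0 \Leftrightarrow gfh = 0$, and $\eta$ links the two right-hand sides, yielding $(e,f) \in \xi$.

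There is no real obstacle. The only point worth checking carefully is the identity $axa^{-1} = 0 \Leftrightarrow a^{-1}ax = 0$ for idempotent $x$: the forward direction follows by conjugating by $a^{-1}$ on the left and $a$ on the right and using that $x$ commutes with $a^{-1}a$, while the backward direction is immediate from $ax = a(a^{-1}ax) = 0$. Once this is in hand, everything else is bookkeeping.
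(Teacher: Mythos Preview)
Your proof is correct. Part~(2) is essentially identical to the paper's argument: both establish the equivalence $aeb = 0 \Leftrightarrow (a^{-1}a)(bb^{-1})e = 0$ by passing to idempotents, the paper by multiplying $aeb = 0$ on the left by $a^{-1}$ and on the right by $b^{-1}$ and then reversing, you by computing the range idempotent of $aeb$ and using your conjugation identity. These are the same manoeuvre in different clothing.

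Part~(1) is where the routes diverge slightly. The paper's proof is a one-liner: since $\mu$ is a congruence, $s \,\mu\, t$ gives $asb \,\mu\, atb$; and since $\mu$ is $0$-restricted (any $x$ with $x \,\mu\, 0$ has $\dom(x) = \dom(0) = 0$, hence $x = 0$), $asb = 0$ forces $atb = 0$. This is really just an instance of Lemma~4.13 applied to $\mu$. Your argument instead unfolds Lemma~4.3 and computes the range idempotents of $asb$ and $atb$ directly. Both work; the paper's version is shorter and emphasises that $\mu \subseteq \xi$ is a special case of the general principle ``every $0$-restricted congruence sits inside $\xi$'', while yours has the virtue of being self-contained and making the mechanism completely explicit.
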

\begin{proof} 
(1) Let $s \mu t$. Suppose that $asb = 0$ then $asb \mu atb$ and so $atb = 0$.
By symmetry this shows that $s \xi t$.

(2) Let $e$ and $f$ be idempotents.
Suppose that for all idempotents $i$ we have that $ie = 0$ iff $if = 0$.
Let $aeb = 0$.
Then $a^{-1}aebb^{-1} = 0$.
Thus $a^{-1}a bb^{-1} e = 0$ and so $a^{-1}a bb^{-1} f = 0$.
Hence $a^{-1}a f bb^{-1} = 0$ and so $afb = 0$.
The reverse direction is proved similarly.
\end{proof}

An inverse semigroup with zero is said to be {\em $0$-disjunctive} if $\xi$ is the equality relation.

\begin{proposition} 
An inverse semigroup $S$ is $0$-disjunctive if and only if $E(S)$ is $0$-disjunctive and $S$ is fundamental.
\end{proposition}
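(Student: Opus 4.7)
The plan is to prove both implications by combining Lemma~4.14 (which relates $\mu$, $\xi$, and the syntactic congruence on $E(S)$) with Lemma~4.4 (which says $\mu$ is the maximum idempotent-separating congruence). No extended calculation is needed; the content sits almost entirely in the two preceding lemmas.

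For the forward direction, assume $S$ is $0$-disjunctive, i.e.\ $\xi$ is the equality relation on $S$. To see that $E(S)$ is $0$-disjunctive, let $e,f \in E(S)$ be syntactically equivalent with respect to $\{0\}$ \emph{inside the semilattice} $E(S)$. By Lemma~4.14(2) this equivalence coincides with the restriction of $\xi$ to $E(S)$, so $(e,f)\in\xi$, hence $e=f$ by hypothesis. To see that $S$ is fundamental, invoke Lemma~4.14(1), which gives $\mu\subseteq\xi$; since $\xi$ is equality, so is $\mu$.

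For the reverse direction, assume $E(S)$ is $0$-disjunctive and $S$ is fundamental; I want to show $\xi$ is the equality relation. The key step is to prove that $\xi$ is idempotent-separating. Indeed, if $e,f\in E(S)$ and $(e,f)\in\xi$, then by Lemma~4.14(2) the pair $(e,f)$ lies in the syntactic congruence determined by $0$ on $E(S)$, so $e=f$ by the hypothesis that $E(S)$ is $0$-disjunctive. Having established that $\xi$ is idempotent-separating, Lemma~4.4 gives $\xi\subseteq\mu$. Since $S$ is fundamental, $\mu$ is the equality relation, and therefore so is $\xi$.

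The only step that requires any thought is the idempotent-separating step in the reverse direction; the point is that Lemma~4.14(2) transfers a purely semilattice-level hypothesis ($0$-disjunctivity of $E(S)$) into the inverse-semigroup-level statement that $\xi$ separates idempotents. Once that translation is in hand, the sandwich $\xi\subseteq\mu$ and $\mu\subseteq\xi$ from the two parts of Lemma~4.14, together with $\mu=\Delta_S$, yields the conclusion immediately.
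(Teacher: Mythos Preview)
Your proof is correct and follows essentially the same approach as the paper: the forward direction via Lemma~4.14, and the reverse direction by showing $\xi$ is idempotent-separating (using Lemma~4.14(2)) and then invoking Lemma~4.4 to obtain $\xi\subseteq\mu$. One small slip in your closing summary paragraph: the inclusion $\xi\subseteq\mu$ comes from Lemma~4.4, not from ``the two parts of Lemma~4.14'' as you wrote, though your actual proof body attributes it correctly.
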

\begin{proof}
If $S$ is $0$-disjunctive it follows by Lemma~4.14 that $E(S)$ is $0$-disjunctive and $S$ is fundamental.
Suppose that $E(S)$ is $0$-disjunctive and $S$ is fundamental.
Then $\xi$ restricted to $E(S)$ is the equality relation and so $\xi$ is idempotent-separating.
Thus by Lemma~4.4 $\xi \subseteq \mu$.
But $S$ is fundamental and so $\mu$ is the equality relation and so $\xi$ is the equality relation.
\end{proof}

\begin{lemma} Let $E$ be a meet semilattice with zero.
Then the following are equivalent.
\begin{enumerate}

\item $E$ is $0$-disjunctive.

\item For all distinct $e,f \in E$  nonzero there exists $g \in E$ such that
either $e \wedge g \neq 0$ and $f \wedge g = 0$ or $e \wedge g = 0$ and $f \wedge g \neq 0$.

\item  For all $0 \neq f < e$ there exists $0 \neq g \leq e$ such that $f \wedge g = 0$.

\end{enumerate}
\end{lemma}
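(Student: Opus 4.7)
The plan is to reduce everything to a concrete statement about meets. By the definition of $\xi$ applied to the commutative idempotent semigroup $(E,\wedge)$, the two-sided condition ``$a \wedge e \wedge b = 0 \Leftrightarrow a \wedge f \wedge b = 0$ for all $a,b$'' collapses to the one-sided condition ``$g \wedge e = 0 \Leftrightarrow g \wedge f = 0$ for all $g \in E$''. Thus (1) says exactly that any two distinct elements of $E$ are separated by some $g$ which meets one of them trivially and the other nontrivially. This is already condition (2), except that (2) restricts to nonzero pairs; but if $e \neq 0 = f$ then $g = e$ trivially separates them, so (1) $\Leftrightarrow$ (2) is essentially bookkeeping.

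For (2) $\Rightarrow$ (3), given $0 \neq f < e$ I would apply (2) to the distinct nonzero pair $\{e,f\}$ to obtain a separator $g$. Since $f \leq e$ forces $f \wedge g \leq e \wedge g$, the option ``$e \wedge g = 0$ and $f \wedge g \neq 0$'' is impossible, so we must have $e \wedge g \neq 0$ and $f \wedge g = 0$; replacing $g$ by $e \wedge g$ yields the required nonzero element below $e$ that meets $f$ at zero.

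For (3) $\Rightarrow$ (2), let $e \neq f$ be distinct and nonzero, and split on $e \wedge f$. If $e \wedge f = 0$, take $g = e$. Otherwise $e \wedge f \neq 0$, and since $e \neq f$ at least one of $e \wedge f < e$ or $e \wedge f < f$ must hold; by the symmetry of (2) in $e$ and $f$, I may assume $e \wedge f < e$. Apply (3) to the pair $e > e \wedge f$ to obtain $0 \neq g \leq e$ with $(e \wedge f) \wedge g = 0$; using $g \leq e$ this simplifies to $f \wedge g = 0$, while $e \wedge g = g \neq 0$. The only step that requires any care is precisely this case analysis in the last direction, and there is no genuine obstacle beyond keeping track of which of $e, f$ dominates in the meet.
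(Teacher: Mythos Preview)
Your argument is correct and essentially follows the paper's. The only structural difference is that the paper closes the cycle $(1)\Rightarrow(2)\Rightarrow(3)\Rightarrow(1)$, whereas you prove $(1)\Leftrightarrow(2)$ and $(2)\Leftrightarrow(3)$; in particular, the paper's $(3)\Rightarrow(1)$ argues by contrapositive through $\xi$ (using that $e\,\xi\,f$ gives $e\,\xi\,(e\wedge f)$ via the congruence property, and that the $\xi$-class of $0$ is $\{0\}$ to get $e\wedge f\neq 0$), while your $(3)\Rightarrow(2)$ is purely order-theoretic with an explicit case split on $e\wedge f$. The core manoeuvre---reducing to the pair $e$ and $e\wedge f$ and then invoking $(3)$---is identical in both.
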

\begin{proof} 

(1)$\Rightarrow$(2). This is immediate from the definition.

(2)$\Rightarrow$(3). Let $0 \neq f < e$.
Then there exists $g'$ such that $g' \wedge f = 0$ and $g' \wedge e \neq 0$
or $g' \wedge f \neq 0$ and $g' \wedge e = 0$.
Clearly the second case cannot occur.
Put $g = g' \wedge e$.
Then $g \leq e$, $g \neq 0$ and $g \wedge f = 0$, as required.

(3)$\Rightarrow$(1). Suppose that $e \xi f$ where $e$ and $f$ are both non-zero.
Then  $e \xi (e \wedge f)$ and so  $e \wedge f \neq 0$. 
Suppose that $e \wedge f \neq e$.
Then there exists $0 \neq g \leq e$ such that $(e \wedge f) \wedge g = 0$.
But clearly $e \wedge g \neq 0$.
We therefore have a contradiction and so $e \wedge f = e$.
Similarly $e \wedge f = f$ and so $e = f$, as required.
\end{proof}

We may now state the characterization of congruence-free inverse semigroups with zero.

\begin{theorem} 
An inverse semigroup with zero $S$ is congruence-free if and only if
$S$ is fundamental, $0$-simple and $E(S)$ is $0$-disjunctive.
\end{theorem}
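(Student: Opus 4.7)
The plan is to prove each direction separately, in both cases exploiting the fact that $\mu$ (Lemma~4.3), $\xi$ (the syntactic congruence of $\{0\}$) and each Rees congruence $\rho_I$ is a congruence on $S$, so that congruence-freeness forces each of them to be either the equality or the universal congruence.

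For the forward direction, I assume $S$ is congruence-free and, as is standard, that $S\neq\{0\}$ (otherwise the equality and universal congruences coincide and the statement is vacuous). I would apply congruence-freeness first to $\mu$: if $\mu$ were universal, then by Lemma~4.3, taking $t=0$ gives $ses^{-1}=0$ for every $s\in S$ and every idempotent $e$, and specializing to $e=s^{-1}s$ yields $ss^{-1}=0$, whence $s=0$, contradicting $S\neq\{0\}$. So $\mu$ is equality, i.e.\ $S$ is fundamental. Next, for any ideal $I$ the Rees congruence $\rho_I$ must be equality or universal, forcing $I\in\{\{0\},S\}$; since $S\neq\{0\}$, this gives that $S$ is $0$-simple. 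Finally I rule out $\xi$ being universal by a similar trick: if $\xi$ were universal, then every element is $\xi$-related to $0$, so for each $s$ we would have $asb=0$ for all $a,b$; specializing $a=b=s^{-1}$ gives $s^{-1}=s^{-1}ss^{-1}=0$, again contradicting $S\neq\{0\}$. Hence $\xi$ is equality, so $S$ is $0$-disjunctive, and then Proposition~4.15 delivers that $E(S)$ is $0$-disjunctive.

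For the reverse direction, I assume $S$ is fundamental, $0$-simple, and $E(S)$ is $0$-disjunctive. Let $\rho$ be an arbitrary congruence on $S$; the goal is to show $\rho$ is equality or universal. Set $I:=\rho(0)$, which is readily seen to be an ideal of $S$ (any $as$ or $sa$ with $s\rho 0$ is itself $\rho$-related to $0$). By $0$-simplicity, $I=\{0\}$ or $I=S$. In the latter case every element is $\rho$-related to $0$ and so $\rho$ is universal by transitivity. In the former case $\rho$ is $0$-restricted, so Lemma~4.13 gives $\rho\subseteq\xi$. Now Proposition~4.15, applied to the hypotheses that $E(S)$ is $0$-disjunctive and $S$ is fundamental, tells us $S$ is $0$-disjunctive, i.e.\ $\xi$ is the equality relation. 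Hence $\rho$ is also equality, completing the proof.

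The main obstacle is conceptual rather than technical: virtually every step is an invocation of an earlier lemma or proposition, and the only genuine calculations are the two short arguments in the forward direction that rule out $\mu$ and $\xi$ being universal. The real care needed is to arrange the three hypotheses on the right-hand side so that each one corresponds cleanly to trivializing one of the three canonical congruences $\mu$, $\rho_I$, and $\xi$; once that correspondence is in place, Proposition~4.15 is the key bridge that ties the $E(S)$-level hypothesis ``$0$-disjunctive'' back to the $S$-level statement about $\xi$.
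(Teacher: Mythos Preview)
Your proof is correct and follows essentially the same route as the paper's. The converse is argued identically (via $\rho(0)$ being an ideal, then Lemma~4.13 and Proposition~4.15), and for the forward direction the paper simply asserts that $\mu$ and $\xi$ must be equality, implicitly because $\mu$ is idempotent-separating and $\xi$ is $0$-restricted (so neither can be universal once $S\neq\{0\}$); your explicit computations ruling out universality are a slightly different but equally valid way of filling in that gap.
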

\begin{proof}
Suppose that $S$ is congruence-free. 
Then $\mu$ is equality, 
there are no non-trivial ideals and $\xi$ is equality.
Thus $S$ is fundamental, $0$-simple and $E(S)$ is $0$-disjunctive.

To prove the converse, suppose that $S$ is fundamental, $0$-simple and $E(S)$ is $0$-disjunctive.
Let $\rho$ be a congruence on $S$ which is not the universal relation.
Then $\rho (0)$ is an ideal which is not $S$.
Thus it must be equal to $\{ 0\}$.
It follows that $\rho$ is a $0$-restricted congruence and so $\rho \subseteq \xi$.
But by Proposition~4.15, $\xi$ is the equality congruence and so $\rho$ is the equality congruence.
\end{proof}

The above theorem will be a useful criterion for congruence-free-ness once we have a nice characterization of $0$-simplicity.
This involves the one Green's relation we have yet to define.
Let $S$ be an inverse semigroup.
Define 
$$(s,t) \in \mathcal{J} \Leftrightarrow SsS = StS.$$
It is always true that $\mathcal{D} \subseteq \mathcal{J}$.
The meaning of the $\mathcal{J}$-relation for inverse semigroups is clarified by the following result.

\begin{lemma} Let $S$ be an inverse semigroup.
Then 
$a \in SbS$ if, and only if, there exists 
$u \in S$ such that $a\, \mathcal{D}  \,u \leq b$.
\end{lemma}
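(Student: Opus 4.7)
The plan is to prove the two implications separately, with the forward direction being substantive. Starting from $a = xby \in SbS$ with $x,y \in S$, the natural candidate is
$u := (x^{-1}x)\,b\,(yy^{-1})$,
the ``two-sided restriction'' of $b$ by the idempotents appearing in the factorisation. To check $u \leq b$: since $yy^{-1}$ is idempotent, $b(yy^{-1}) \leq b$ by Lemma~2.10\,((1)$\Leftrightarrow$(2)), and since $x^{-1}x$ is idempotent, $(x^{-1}x)[b(yy^{-1})] \leq b(yy^{-1})$ by Lemma~2.10\,((1)$\Leftrightarrow$(3)); transitivity of the partial order yields $u \leq b$.

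For $a\,\mathcal{D}\,u$, I would exhibit an intermediary element linking the two in the underlying groupoid: take $z := x^{-1}a = (x^{-1}x)\,b\,y$. A short calculation using only the idempotency of $x^{-1}x$ and $yy^{-1}$ gives
$z^{-1}z = y^{-1}b^{-1}(x^{-1}x)by = a^{-1}a$
and
$zz^{-1} = (x^{-1}x)\,b\,(yy^{-1})\,b^{-1}(x^{-1}x) = uu^{-1}.$
Thus $a\,\mathcal{L}\,z$ and $z\,\mathcal{R}\,u$, placing $a$ and $u$ in the same connected component of the underlying groupoid, i.e.\ $a\,\mathcal{D}\,u$.

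The converse is easier. If $u \leq b$ then Lemma~2.10(4) gives $u = uu^{-1}b$, whence $u = (uu^{-1})\,b\,(u^{-1}u) \in SbS$. If moreover $a\,\mathcal{D}\,u$, choose an intermediary $z$ with $a\,\mathcal{L}\,z\,\mathcal{R}\,u$; then $a = a(a^{-1}a) = a(z^{-1}z) \in Sz$ and $z = (zz^{-1})z = (uu^{-1})z \in uS$, so $a \in SuS \subseteq SbS$. The only real obstacle is spotting the candidates $u = (x^{-1}x)b(yy^{-1})$ and $z = x^{-1}a$; both become natural once one pictures $x, b, y$ as partial bijections and asks which portion of $b$ actually survives the composite $xby$. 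After that, everything is routine bookkeeping with commutativity of idempotents and the equivalent characterisations of $\leq$ in Lemma~2.10.
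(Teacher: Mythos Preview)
Your proof is correct and follows the same underlying idea as the paper's: restrict $b$ to the portion that actually participates in the product $xby$, and observe that this restriction is $\mathcal{D}$-related to $a$. The paper packages the forward direction by invoking Proposition~2.23 twice to rewrite $a = xby$ as a restricted product $x'\cdot b'\cdot y'$ with $b' \leq b$, whence $a\,\mathcal{D}\,b'$ is immediate from the groupoid structure; you instead name the candidate $u = (x^{-1}x)\,b\,(yy^{-1})$ and the linking element $z = x^{-1}a$ explicitly and verify $a\,\mathcal{L}\,z\,\mathcal{R}\,u$ by hand. Your route is slightly more concrete and avoids the auxiliary proposition; the paper's is shorter once that proposition is in place. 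For the converse the paper simply cites $\mathcal{D} \subseteq \mathcal{J}$ together with $Sb'S \subseteq SbS$, which is exactly what your explicit chain $a \in Sz \subseteq SuS \subseteq SbS$ amounts to.
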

\begin{proof} Let $a \in SbS$.
Then $a = xby$ for some $x,y \in S$.  
By Proposition~2.23, 
there exist elements $x',y'$ and $b'$
such that $a = x'\cdot b'\cdot y'$ is a restricted product where
$x' \leq x, \, b' \leq b$ and $a' \leq a$.
Hence $a \, \mathcal{D}   \, b'$ which, 
together with $b' \leq b$,
gives $a\, \mathcal{D}  \,b' \leq b$.
Conversely, suppose that $a \, \mathcal{D}  \, b' \leq b$. 
From $a \, \mathcal{D}  \, b'$ we have that $a \, \mathcal{J}  \,b'$, 
and from $b' \leq b$ we have that 
$Sb'S \subseteq SbS$.  
Thus $a \in  SbS$.
\end{proof}

\begin{lemma} Let $S$ be an inverse semigroup with zero.
 Then it is $0$-simple if, and only if, $S \neq \{ 0\}$  and the only $\mathcal{J}$-classes  are $\{0 \}$ and $S\setminus \{0 \}$. 
\end{lemma}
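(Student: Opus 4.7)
The plan is to reduce the equivalence to the observation that, in an inverse semigroup, $SsS$ is exactly the principal two-sided ideal generated by $s$. This is because $s = ss^{-1}s \in SsS$, so $SsS$ is an ideal containing $s$, and any ideal containing $s$ must contain $SsS$. Once this is granted, the $\mathcal{J}$-relation on $S$ is just the relation ``generates the same principal ideal'', which makes the link with $0$-simplicity essentially formal.

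For the forward direction, I would assume $S$ is $0$-simple. Then $S \neq \{0\}$, and for every nonzero $s$ the set $SsS$ is a nonzero ideal, hence equals $S$. Thus $SsS = StS = S$ for any two nonzero $s,t$, giving a single nonzero $\mathcal{J}$-class $S \setminus \{0\}$. The class of $0$ is $\{0\}$ because $S \cdot 0 \cdot S = \{0\}$, which is strictly smaller than $SsS = S$ for any nonzero $s$.

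For the converse, I would take a nonzero ideal $I$ of $S$ and pick a nonzero $s \in I$. For any nonzero $t \in S$, the hypothesis gives $s \, \mathcal{J} \, t$, hence $StS = SsS$. Using $t = tt^{-1}t \in StS$, we get $t \in SsS \subseteq I$ (the last inclusion because $I$ is an ideal containing $s$). So $S \setminus \{0\} \subseteq I$; combined with $0 \in I$ (which holds for any nonempty ideal in a semigroup with zero), this forces $I = S$, proving $0$-simplicity.

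There is no real obstacle here: the whole argument hinges on the single observation $s \in SsS$, and after that it is a symbolic unpacking of the definitions of ideal, principal ideal, and $\mathcal{J}$-class. If anything needs care, it is just making sure that when passing from $\mathcal{J}$-equivalence to ideal containment we use $t \in StS$ (so that the inclusion $StS \subseteq I$ actually captures $t$), which is again a consequence of $t = tt^{-1}t$.
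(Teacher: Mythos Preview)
Your proof is correct and follows essentially the same route as the paper's: both directions hinge on the fact that $SsS$ is the principal ideal generated by $s$ (via $s = ss^{-1}s$), so that $0$-simplicity forces $SsS = S$ for all nonzero $s$, and conversely a single nonzero $\mathcal{J}$-class forces any nonzero ideal to swallow every nonzero element. Your write-up is slightly more explicit than the paper's (you spell out why $s \in SsS$, why $\{0\}$ is its own class, and why $0 \in I$), but the argument is the same.
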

\begin{proof}
Let $S$ be $0$-simple and let $s,t \in S$ be a pair of non-zero elements.
Both $SsS$ and $StS$ are ideals of $S$ and so must be equal.
Thus $(s,t) \in  \mathcal{J}$.
Conversely, suppose that the only non-zero $\mathcal{J}$-class is $S\setminus\{ 0\}$.
Let $I$ be any non-zero ideal of $S$.
Let $s \in I$ and $t \in S$ be non-zero elements.
By assumption, $(s,t) \in \mathcal{J}$.
Thus $t = asb$ for some $a,b \in S$ and so $t \in I$.
Hence $I = S \setminus \{ 0\}$.
\end{proof}

\begin{proposition} Let $S$ be an inverse semigroup with zero.
\begin{enumerate}

\item $S$ is 0-simple if, and only if, for any two non-zero 
elements $s$ and $t$ in $S$ there exists an element $s'$ such that
$s\, \mathcal{D}  \,s' \leq t$.

\item $S$ is 0-simple if, and only if, for any two non-zero 
idempotents $e$ and $f$ in $S$ there exists an idempotent $i$ such that
$e\, \mathcal{D}  \,i \leq f$. 

\end{enumerate}
\end{proposition}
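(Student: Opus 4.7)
My plan is to leverage the two preceding lemmas (Lemma 4.18 and Lemma 4.19) together with the fact that idempotents form an order ideal (Property (5) of Proposition 2.11).

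For part (1), my starting point is that by Lemma 4.19, $S$ is $0$-simple iff every pair of nonzero elements $s,t$ satisfies $s\,\mathcal{J}\,t$, i.e.\ $s \in StS$. Lemma 4.18 translates $s \in StS$ into the assertion that there exists $s'$ with $s\,\mathcal{D}\,s' \leq t$. Thus the forward direction is immediate: $0$-simplicity gives $s\,\mathcal{J}\,t$, hence $s \in StS$, hence such an $s'$ exists. For the converse, given the $s'$ condition, Lemma 4.18 gives $s \in StS$, so $SsS \subseteq StS$; symmetry yields the reverse inclusion and hence $SsS = StS$, i.e.\ every two nonzero elements are $\mathcal{J}$-related. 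Lemma 4.19 then forces $0$-simplicity.

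For part (2), the forward direction follows painlessly from part (1): given nonzero idempotents $e,f$, apply (1) to obtain $s'$ with $e\,\mathcal{D}\,s' \leq f$; since $f$ is idempotent, Property (5) of Proposition 2.11 tells us $s'$ is an idempotent as well, so we may take $i = s'$.

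The only real step of substance is the converse of (2). Here I aim to recover the condition of (1) from the idempotent version. Given nonzero $s,t$, consider the nonzero idempotents $e = s^{-1}s = \dom(s)$ and $f = t^{-1}t = \dom(t)$. By hypothesis pick an idempotent $i$ with $e\,\mathcal{D}\,i$ and $i \leq f$. Now set $s' = ti$; since $i$ is an idempotent, $s' \leq t$, and a short calculation using $i \leq t^{-1}t$ gives $\dom(s') = it^{-1}ti = i$. Then $\dom(s')=i$ is connected to $e = \dom(s)$ in the underlying groupoid (via the path witnessing $e\,\mathcal{D}\,i$), so $s\,\mathcal{D}\,s'$. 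Note also $s' \neq 0$ since $\dom(s') = i \neq 0$. This produces the required $s'$, so (1) applies and $S$ is $0$-simple.

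The main (minor) obstacle is confirming in the converse of (2) that the element $s' = ti$ one guesses really satisfies $\dom(s')=i$ and hence sits in the connected component of $s$; beyond that, the argument is essentially just bookkeeping on top of Lemmas 4.18 and 4.19.
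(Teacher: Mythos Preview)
Your proof is correct and follows essentially the same route as the paper's, invoking Lemmas~4.18 and~4.19 for part~(1) and reducing part~(2) to part~(1). The only cosmetic difference is that in the converse of~(2) the paper works with the range idempotents $e = ss^{-1}$, $f = tt^{-1}$ and sets $u = it$ (computing $\ran(u) = i$), whereas you use the dual choice of domain idempotents and $s' = ti$; both variants are equally valid.
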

\begin{proof} 

(1) By Lemma~4.19, an inverse semigroup is 0-simple
if it consists of exactly two $\mathcal{J}$-class $\{0 \}$ and $S \setminus \{0 \}$.
Thus any two non-zero elements of $S$ are $\mathcal{J}$-related.
The result is now immediate by Lemma~4.18.

(2) Suppose the condition on the idempotents holds. 
Let $s,t \in S$ be a pair of non-zero elements.
Then $e = ss^{-1}$ and $f = tt^{-1}$ are non-zero idempotents and so, by assumption,
there is an idem\-pot\-ent $i$ such that $e\,\mathcal{D}  \,i \leq f$.
Put $u = it$.
Then $u \leq t$, and 
$uu^{-1} = it(it)^{-1} = itt^{-1} = if = i$.
Thus $s\, \mathcal{D}  \,u \leq t$.
The proof of the converse is straightforward.
\end{proof}

\section{Transitive representations}

There are two basic definitions of `action' for an inverse semigroup.
If we regard an inverse semigroup as just a semigroup then there is the usual notion of a {\em left $S$-set}.
However, just as the Cayley theorem in group theory, 
the Wagner-Preston theorem motivates another class of actions.
It is this notion that we shall study in this section.
All inverse semigroups will have a zero and we shall assume that all homomorphisms preserve the zero.
The theory developed is remarkably similar to the classical theory of transitive representations of groups.

A {\em representation} of an inverse semigroup by means of partial bijections 
is a homomorphism $\theta \colon \: S \rightarrow I(X)$ to the symmetric inverse monoid on a set $X$.

A representation of an inverse semigroup in this sense leads to a corresponding notion of an action of the inverse semigroup $S$ on the set $X$:
the associated action is defined by $s \cdot x = \theta (s)(x)$, if $x$ belongs to the set-theoretic domain of $\theta (s)$.
The action is therefore a partial function from $S \times X$ to $X$ mapping $(s,x)$ to $s \cdot x$ when $\exists s \cdot x$
satisfying the two axioms:
\begin{description}
\item[{\rm (A1)}] If $\exists e \cdot x$ where $e$ is an idempotent then $e \cdot x = x$.

\item[{\rm (A2)}] $\exists (st) \cdot x$ iff $\exists s \cdot (t \cdot x)$ in which case they are equal.
\end{description}

Representations and actions are different ways of describing the same thing.
For convenience, we shall use the words `action' and `representation' interchangeably:
if we say the inverse semigroup $S$ acts on a set $X$ then this will imply the existence
of an appropriate homomorphism from $S$ to $I(X)$.
If $S$ acts on $X$ we shall often refer to $X$ as a {\em space} and its elements as {\em points}.
A subset $Y \subseteq X$ closed under the action is called a {\em subspace}.
Disjoint unions of actions are again actions.
The proof of the following is straightforward.

\begin{lemma} Let $S$ act on $X$.
Define a relation $\sim$ on $X$ by $x \sim y$ iff there exists $s \in S$
such that $\exists s \cdot x$ and $s \cdot x = y$.
This relation is symmetric and transitive.
It is reflexive if and only if 
for each $x \in X$ there is $s \in S$ such that $\exists s \cdot x$.
\end{lemma}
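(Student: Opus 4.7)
The plan is to verify the three properties directly from axioms (A1) and (A2), using the inverse $s^{-1}$ in $S$ to reverse the effect of $s$. The key observation is that $s^{-1}s$ is an idempotent whose action, when defined, acts as the identity by (A1); together with (A2) this lets inverse elements of $S$ act as reverses of one another.

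For symmetry, suppose $\exists s \cdot x$ with $s \cdot x = y$; I will show $s^{-1}$ witnesses $y \sim x$. Using the semigroup identity $s = s(s^{-1}s)$, the hypothesis $\exists s \cdot x$ reads as $\exists (s \cdot (s^{-1}s)) \cdot x$, so (A2) gives $\exists s \cdot ((s^{-1}s) \cdot x)$, which in particular forces $\exists (s^{-1}s) \cdot x$; axiom (A1) then yields $(s^{-1}s) \cdot x = x$. A second use of (A2), now on the factorization $s^{-1}s = s^{-1} \cdot s$, converts $\exists (s^{-1}s) \cdot x$ into $\exists s^{-1} \cdot (s \cdot x)$, i.e.\ $\exists s^{-1} \cdot y$, with value $s^{-1} \cdot y = (s^{-1}s) \cdot x = x$. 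Hence $y \sim x$.

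For transitivity, suppose $s \cdot x = y$ and $t \cdot y = z$. Then $\exists t \cdot (s \cdot x)$ with value $z$, and (A2) gives $\exists (ts) \cdot x$ with $(ts) \cdot x = z$, so $ts$ witnesses $x \sim z$. For the reflexivity characterization, the forward direction is immediate from the definition of $\sim$. Conversely, if some $s \in S$ satisfies $\exists s \cdot x$, then the argument above already established $\exists (s^{-1}s) \cdot x$ and $(s^{-1}s) \cdot x = x$, so the idempotent $s^{-1}s$ witnesses $x \sim x$.

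The only real subtlety is bookkeeping around (A2), which bundles an existence biconditional with an equality of values; both directions must be invoked to propagate $\exists s \cdot x$ first to $\exists (s^{-1}s) \cdot x$ and then to $\exists s^{-1} \cdot y$. Once this is done carefully, no further ingredients are needed.
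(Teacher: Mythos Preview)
Your proof is correct and is exactly the routine verification the paper has in mind; the paper itself omits the proof, saying only that it is straightforward. Your careful use of (A2) in both directions to pass from $\exists\, s\cdot x$ to $\exists\,(s^{-1}s)\cdot x$ and then to $\exists\, s^{-1}\cdot y$ is precisely the intended argument.
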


\begin{remark}{\em 
An action satisfying the condition above is said to be {\em effective}.
From now on, we shall regard effectiveness as part of the definition of an inverse semigroup action;
if an action were not effective, then we could restrict our attention to the largest subset of $X$ where it was.}
\end{remark}

The action of an inverse semigroup $S$ on the set $X$
induces an equivalence relation $\sim$ on the set $X$ when we define
$x \sim y$ iff $s \cdot x = y$ for some $s \in S$.
The action is said to be {\em transitive} if $\sim$ is $X \times X$.
Just as in the theory of permutation representations of groups, every representation of an inverse semigroup is a disjoint union of transitive representations.

Transitive actions of inverse semigroups are characterized by special kinds of inverse semigroups in a way generalizing the
relationship between transitive group actions and subgroups.
Fix a point $x \in X$, and consider the set $S_{x}$ consisting of all
$s \in S$ such that $s \cdot x = x$.
We call $S_{x}$ the {\em stabilizer} of the point $x$.
If an element $s$ fixes a point then so too will any element above $s$,
and so the set $S_{x}$ is a closed inverse subsemigroup of $S$.
Observe that stabilizers cannot contain zero.
Now let $y \in X$ be any point.
By transitivity, there is an element $s \in S$ such that $s \cdot x = y$.
Observe that because $s \cdot x$ is defined so too is $s^{-1}s$ and that $s^{-1}s \in S_{x}$.
An easy calculation shows that $[sS_{x}]$ is the set of all elements of $S$ which map $x$ to $y$.

A closed inverse subsemigroup of $S$ that does not contain zero is said to be {\em proper}.
Let $H$ be a proper closed inverse subsemigroup of $S$.
Define a {\em left coset} of $H$ to be a set of the form $(sH)^{\uparrow}$ where $s^{-1}s \in H$.
The following are well-known but we include the proofs for the sake of completeness.

\begin{lemma} Let $H$ be a proper closed inverse subsemigroup of $S$.

\begin{enumerate}

\item Two cosets $(sH)^{\uparrow}$ and $(tH)^{\uparrow}$ are equal iff $s^{-1}t \in H$.

\item If $(sH)^{\uparrow} \cap (tH)^{\uparrow} \neq \emptyset$ then $(sH)^{\uparrow} = (tH)^{\uparrow}$.

\end{enumerate}
\end{lemma}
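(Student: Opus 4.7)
The plan is to prove (1) by direct order-theoretic manipulation in both directions, and then to reduce (2) to (1) by producing an element of $H$ below $s^{-1}t$.

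For the forward direction of (1), I would first record the key reusable observation: since the coset hypothesis gives $s^{-1}s \in H$, we have $s = s(s^{-1}s) \in sH \subseteq (sH)^{\uparrow}$, and analogously $t \in (tH)^{\uparrow}$. Assuming $(sH)^{\uparrow} = (tH)^{\uparrow}$, it follows that $s \in (tH)^{\uparrow}$, so $s \geq th_1$ for some $h_1 \in H$. Inverting (Proposition~2.11(2)) and multiplying by $t$ on the right (Proposition~2.11(3)) would yield $s^{-1}t \geq h_1^{-1}t^{-1}t$. The right-hand side lies in $H$ because $h_1^{-1} \in H$ and $t^{-1}t \in H$, and upward closure of $H$ then forces $s^{-1}t \in H$.

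For the converse, assume $s^{-1}t \in H$, so also $t^{-1}s = (s^{-1}t)^{-1} \in H$. Given any $sh \in sH$, I would consider $tt^{-1}sh = t\bigl((t^{-1}s)h\bigr)$. Since $(t^{-1}s)h \in H$, this element lies in $tH$; and because $tt^{-1}$ is idempotent, Lemma~2.8 gives $tt^{-1}sh \leq sh$. Thus $sh \in (tH)^{\uparrow}$, so $sH \subseteq (tH)^{\uparrow}$ and hence $(sH)^{\uparrow} \subseteq (tH)^{\uparrow}$ by upward closure of the latter. The reverse inclusion follows by symmetry.

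For (2), I would pick $x \in (sH)^{\uparrow} \cap (tH)^{\uparrow}$, so $x \geq sh_1$ and $x \geq th_2$ for some $h_i \in H$. Premultiplying by $s^{-1}$ and $t^{-1}$ respectively gives $s^{-1}x \geq s^{-1}sh_1 \in H$ and $t^{-1}x \geq t^{-1}th_2 \in H$, so upward closure places both $s^{-1}x$ and $t^{-1}x$ in $H$. Then $(s^{-1}x)(t^{-1}x)^{-1} = s^{-1}(xx^{-1})t \in H$, and sandwiching the idempotent $xx^{-1}$ between $s^{-1}$ and $t$ gives $s^{-1}(xx^{-1})t \leq s^{-1}t$, so upward closure yields $s^{-1}t \in H$; now part~(1) delivers $(sH)^{\uparrow} = (tH)^{\uparrow}$. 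No step is deep; the only bookkeeping issue is to invoke the coset hypotheses $s^{-1}s, t^{-1}t \in H$ together with upward closure at just the right moments so that each small witness produced lies in $H$ before being lifted.
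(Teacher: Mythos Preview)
Your proof is correct and follows essentially the same route as the paper's: both directions of (1) are handled by producing a witness in $H$ below $s^{-1}t$ (or symmetrically) and invoking upward closure, and your argument for (2) is identical to the paper's up to renaming the common element. The only slip is the citation: the fact that $tt^{-1}sh \leq sh$ for the idempotent $tt^{-1}$ comes from Lemma~2.10 (the characterization of the natural partial order), not Lemma~2.8, which is the Wagner--Preston theorem.
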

\begin{proof} 
(1) Suppose that  $(sH)^{\uparrow} = (tH)^{\uparrow}$.
Then $t \in (sH)^{\uparrow}$ and so $sh \leq t$ for some $h \in H$.
Thus $s^{-1}sh \leq s^{-1}t$.
But $s^{-1}sh \in H$ and $H$ is closed and so $s^{-1}t \in H$.

Conversely, suppose that $s^{-1}t \in H$.
Then $s^{-1}t = h$ for some $h \in H$ and so
$sh = ss^{-1}t \leq t$.
It follows that $tH \subseteq sH$ and so $(tH)^{\uparrow} \subseteq (sH)^{\uparrow}$.
The reverse inclusion follows from the fact that $t^{-1}s \in H$ since
$H$ is closed under inverses.

(2) Suppose that $a \in (sH)^{\uparrow} \cap (tH)^{\uparrow}$. 
Then $sh_{1} \leq a$ and $th_{2} \leq a$ for some $h_{1},h_{2} \in H$.
Thus $s^{-1}sh_{1} \leq s^{-1}a$ and $t^{-1}th_{2} \leq t^{-1}a$.
Hence $s^{-1}a,t^{-1}a \in H$.
It follows that $s^{-1}aa^{-1}t \in H$,
but $s^{-1}aa^{-1}t \leq s^{-1}t$.
This gives the result by (i) above.
\end{proof}

We denote by $S/H$ the set of all left cosets of $H$ in $S$.
The inverse semigroup $S$ acts on the set $S/H$ when we define
$$a \cdot (sH)^{\uparrow} = (asH)^{\uparrow} \Leftrightarrow \dom (as) \in H.$$
This defines a transitive action.

Let $S$ be an inverse semigroup acting on the sets $X$ and $Y$.
A bijective function $\alpha \colon \: X \rightarrow Y$
is said to be an {\em equivalence} from $X$ to $Y$ if
$\exists s \cdot x \Leftrightarrow \exists s \cdot \alpha (x)$
and if either side exists we have that
$\alpha (s \cdot x) = s \cdot \alpha (x)$.
As with group actions, equivalent actions are the same except for the labelling of the points.
The proof of the following theorem is a straightforward generalization of the one for groups.

\begin{theorem} Let $S$ act transitively on the set $X$.
Then the action is equivalent to the action of $S$ on the set $S/S_{x}$ where $x$ is any point of $X$.\qed
\end{theorem}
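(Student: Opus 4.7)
\medskip

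The plan is to imitate the classical group-theoretic orbit--stabilizer argument, with the key new ingredient that one must track carefully when the partial action $\exists s\cdot x$ is defined, using axioms (A1) and (A2). Fix $x \in X$, set $H = S_{x}$, and define
$$\alpha \colon S/H \rightarrow X, \qquad \alpha\bigl((sH)^{\uparrow}\bigr) = s \cdot x.$$
Here we use that $(sH)^{\uparrow}$ being a coset means $s^{-1}s \in H$, so $s^{-1}s \cdot x = x$; this implies that $x$ lies in the domain of the partial bijection representing $s^{-1}s$, hence also in the domain of $s$ (since $\theta(s^{-1}s)$ and $\theta(s)$ share a domain), so $\exists s \cdot x$.

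First I would check that $\alpha$ is well-defined. If $(sH)^{\uparrow} = (tH)^{\uparrow}$, then by Lemma~5.3(1) we have $s^{-1}t \in H$, so $s^{-1}t \cdot x = x$. Then
$$s \cdot x = s \cdot (s^{-1}t \cdot x) = (ss^{-1}t) \cdot x = ss^{-1} \cdot (t \cdot x) = t \cdot x,$$
where the last step uses axiom (A1) applied to the idempotent $ss^{-1}$ (which fixes $t\cdot x$ since the product $ss^{-1}\cdot(t\cdot x)$ is defined). Injectivity runs in reverse: if $s\cdot x = t\cdot x$, then $s^{-1}t \cdot x = s^{-1}\cdot(t\cdot x) = s^{-1}\cdot(s\cdot x) = s^{-1}s \cdot x = x$, so $s^{-1}t \in H$, and again by Lemma~5.3(1) the cosets coincide. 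Surjectivity is immediate from transitivity: given $y \in X$, pick $s$ with $s\cdot x = y$; then $\exists s\cdot x$ forces $s^{-1}s \cdot x = x$, so $s^{-1}s \in H$ and $(sH)^{\uparrow}$ is a coset mapping to $y$.

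Finally I would verify that $\alpha$ intertwines the actions. By the definition preceding the theorem, $\exists a \cdot (sH)^{\uparrow}$ iff $\dom(as) = s^{-1}a^{-1}as \in H$, i.e., iff $s^{-1}a^{-1}as \cdot x = x$. On the other hand, using (A2), $\exists a \cdot (s \cdot x)$ iff $\exists (as)\cdot x$, which holds iff $x$ lies in the domain of $\theta(as)$, equivalently iff $s^{-1}a^{-1}as \cdot x = x$. So the two conditions coincide, and when they hold
$$\alpha\bigl(a \cdot (sH)^{\uparrow}\bigr) = \alpha\bigl((asH)^{\uparrow}\bigr) = (as) \cdot x = a \cdot (s \cdot x) = a \cdot \alpha\bigl((sH)^{\uparrow}\bigr).$$

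The main obstacle is nothing deep but rather the bookkeeping: each manipulation like $s\cdot(s^{-1}t\cdot x) = (ss^{-1}t)\cdot x$ must be justified by the definedness clauses of (A1) and (A2), and one must remember that $\exists s\cdot x$ is equivalent to $\exists s^{-1}s \cdot x$ via the identification of domains in the representation $\theta \colon S \to I(X)$. Once this convention is in place, every verification is routine and mirrors the familiar group-theoretic proof.
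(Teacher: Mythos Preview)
Your proposal is correct and is exactly the argument the paper has in mind: note that the paper omits the proof entirely (the \texttt{\textbackslash qed} appears in the statement itself), remarking only that ``the proof of the following theorem is a straightforward generalization of the one for groups.'' Your write-up supplies precisely that generalization, with the appropriate care about definedness via (A1), (A2), and Lemma~5.3(1).
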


If $H$ and $K$ are any closed inverse subsemigroups of $S$ that do not contain zero then they determine
equivalent actions if and only if there exists $s \in S$ such that
$$sHs^{-1} \subseteq K \mbox{ and } s^{-1}Ks \subseteq H.$$
This relationship between two closed inverse subsemigroups is called {\em conjugacy}
although it is important to observe that equality need not hold in the definition above.

\begin{lemma} If $H$ and $K$ are conjugate as above then $ss^{-1} \in K$ and $s^{-1}s \in H$.
Also $(sHs^{-1})^{\uparrow} = K$ and $(s^{-1}Ks)^{\uparrow} = H$.
\end{lemma}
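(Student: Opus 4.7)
The plan is to prove the two claims in order: first the idempotent memberships $ss^{-1}\in K$ and $s^{-1}s\in H$, then the two equalities $(sHs^{-1})^{\uparrow}=K$ and $(s^{-1}Ks)^{\uparrow}=H$. Both rely on the fact, used repeatedly in the excerpt, that a closed inverse subsemigroup is upward closed in the natural partial order, so it suffices to produce elements already known to be in $K$ or $H$ and lie below the target.

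For the first claim, I will pick an idempotent $e\in H$ (any element of $H$ gives one by forming $h^{-1}h$). The hypothesis $sHs^{-1}\subseteq K$ then yields $ses^{-1}\in K$, and this element is an idempotent by Lemma~2.2(3). A direct calculation using commutativity of idempotents shows
\[
(ses^{-1})(ss^{-1}) = se(s^{-1}s)s^{-1} = s(s^{-1}s)es^{-1} = ses^{-1},
\]
so $ses^{-1}\leq ss^{-1}$ in $E(S)$. Since $K$ is upward closed, $ss^{-1}\in K$. The symmetric argument, starting from an idempotent $f\in K$ and using $s^{-1}Ks\subseteq H$, gives $s^{-1}s\in H$.

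For the equality $(sHs^{-1})^{\uparrow}=K$, the containment $\subseteq$ is immediate: $sHs^{-1}\subseteq K$ and $K$ is upward closed. For the reverse, I take an arbitrary $k\in K$ and put $h=s^{-1}ks$, which lies in $H$ by hypothesis. Then
\[
shs^{-1} = s(s^{-1}ks)s^{-1} = (ss^{-1})\,k\,(ss^{-1}).
\]
The key step is to show this lies below $k$. Using Lemma~2.6, we can push the left idempotent through: $(ss^{-1})k = k\cdot i$ for some idempotent $i$, so $(ss^{-1})k(ss^{-1}) = k\cdot i(ss^{-1})$, a product of $k$ with an idempotent, which is therefore $\leq k$. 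Hence $shs^{-1}\leq k$, showing $k\in (sHs^{-1})^{\uparrow}$. The second equality $(s^{-1}Ks)^{\uparrow}=H$ follows by the symmetric argument, interchanging the roles of $H$ and $K$ and of $s$ and $s^{-1}$.

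The only step requiring any thought is the computation $(ss^{-1})k(ss^{-1})\leq k$, and this is routine once one invokes Lemma~2.6 to slide idempotents through arbitrary elements. Everything else is bookkeeping with the inclusions supplied by the definition of conjugacy and the upward-closedness of $H$ and $K$.
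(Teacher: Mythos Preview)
Your proof is correct and follows essentially the same route as the paper's: pick an idempotent $e\in H$, use $ses^{-1}\in K$ together with $ses^{-1}\leq ss^{-1}$ and upward closure of $K$, then for the reverse inclusion in $(sHs^{-1})^{\uparrow}=K$ observe that $s(s^{-1}ks)s^{-1}\leq k$. The paper simply asserts the inequalities $ses^{-1}\leq ss^{-1}$ and $ss^{-1}k\,ss^{-1}\leq k$ without justification, whereas you have spelled out the idempotent manipulations; otherwise the arguments are identical.
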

\begin{proof}
Let $e \in H$ be any idempotent.
Then $ses^{-1} \in K$.
But $ses^{-1} \leq ss^{-1}$ and so $ss^{-1} \in K$.
Similarly $s^{-1}s \in H$.

We have that $sHs^{-1} \subseteq K$ and so $(sHs^{-1})^{\uparrow} \subseteq K$.
Let $k \in K$.
Then $s^{-1}ks \in H$ and $s(s^{-1}ks)s^{-1} \in sHs^{-1}$ and
$s(s^{-1}ks)s^{-1} \leq k$.
Thus  $(sHs^{-1})^{\uparrow} = K$, as required.
\end{proof}

Thus to study transitive actions of an inverse semigroups with zero $S$ it is enough to study up to conjugacy 
the closed inverse subsemigroups of $S$ not containing zero.

\section{Notes on Chapter 1}

I have assumed the reader is familiar with the basics of semigroup theory such as could be gleaned from the first few sections of Howie \cite{Howie}.
There is a mild use of category theory for which the standard reference is Mac~Lane \cite{Maclane}.
There are currently two books entirely devoted to inverse semigroup theory:
Petrich's \cite{Petrich} and mine \cite{Lawson}.
Petrich's book is pretty comprehensive up to 1984 and is still a useful reference.
Its only drawback is the poor index which makes finding particular topics a bit of a chore.
My book is less ambitious.
Its goal is to motivate the study of inverse semigroups by concentrating on concrete examples and was completed in 1998.
In writing this chapter, I have drawn mainly upon my own book  
but, in the case of the section on congruence-free inverse semigroups,
I have based my discussion on Petrich with some flourishes of my own.
I have only touched on the history of inverse semigroup theory here because I did that in great detail \cite{Lawson}.


Inverse semigroups are special kinds of regular semigroups and arbitrary regular semigroups are also interesting and important.
The deepest work in general regular semigroup theory has been carried out by K.~S.~S.~Nambooripad \cite{N1,N2}.
The 1970's and 80's seemed to be halcyon days for regular semigroup theory.
Howie's book is still heavily biased in their favour and
many results in this chapter are really special cases of results for general regular semigroups.
However, in recent years regular semigroup theory has started to re-emerge and to connect with other parts of mathematics.
This wider appreciation of regular semigroups is due in large measure to a paper by Kenneth S. Brown \cite{B}
who showed that a class of idempotent semigroups was useful in understanding random walks on certain groups.
For a recent development of this line of work see \cite{MS}.

My passing reference to Girard's work in linear logic prior to Lemma~2.6 can be verified by checking out the third bullet-point on page~345 of \cite{GLR}.

Sheaves of groups have important applications in mathematics \cite{Iversen}.
The cohomology of inverse semigroups was introduced by Lausch \cite{L} and put into the correct categorical framework by Loganathan in his remarkable paper \cite{Log}. 
A cohomology of inverse semigroups was also introduced by Renault \cite{R}.

The sense in which an inverse semigroup is an extension of a presheaf of groups by a pseudogroup requires non-abelian cohomology.
A theory of such extensions generalizing the classical group case was worked out by Coudron \cite{C} and D'Alarcao \cite{D}.
More on extensions of inverse semigroups can be found in Chapter~5 of \cite{Lawson}.

Much of what can be said about inverse semigroups can be generalized easily to inverse categories and there are good reasons for doing so.
If $S$ is an inverse semigroup then its {\em Cauchy completion} is an inverse category.
The Cauchy completion of $S$ is the category $C(S)$ whose elements are of the form $(e,s,f)$ where $esf = s$ with the obvious partial binary operation.
Cauchy completions are important in the Morita theory of inverse semigroups.
The earliest reference to inverse categories I know is \cite{Kastl}.
They have been used by Grandis \cite{Grandis} in developing a theory of local structure that parallels Ehresmann's.
The minimum group congruence can be generalized to inverse categories to yield the minimum groupoid congruence.
There are two nice applications of this.
The first is in constructing a topological groupoid from a pseudogroup of transformations;
this is described on pages 63 and 64 of my book \cite{Lawson}.
The second comes from group theory.
Let $G$ and $H$ be groups.
Then an {\em almost (or virtual) isomorphism} from $G$ to $H$ is an isomorphism $\alpha \colon A \rightarrow B$
where $A$ is a subgroup of finite index in $G$ and $B$ is a subgroup of finite index in $H$.
The intersection of a finite number of subgroups of finite index is again of finite index.
Thus groups and almost isomorphisms form an inverse category.
The {\em abstract commensurator} of a group $G$ is then the maximum group image of the local inverse monoid at $G$ \cite{BB}.


\end{document}